\theoremstyle{plain}
\newtheorem{lemma}{Lemma}[section]
\newtheorem{proposition}[lemma]{Proposition}
\newtheorem{theorem}[lemma]{Theorem}
\newtheorem{corollary}[lemma]{Corollary}
\theoremstyle{definition}
\newtheorem{definition}[lemma]{Definition}
\newtheorem{remark}[lemma]{Remark}
\newtheorem{example}[lemma]{Example}
\DeclareMathOperator{\Exp}{Exp}
\DeclareMathOperator{\rad}{rad}
\DeclareMathOperator{\basissupp}{supp}
\DeclareMathOperator{\suppvar}{supp}
\DeclareMathOperator{\polysupp}{supp}
\DeclareMathOperator{\id}{id}
\DeclareMathOperator{\linrank}{rank}
\DeclareMathOperator{\basisrank}{rank}
\DeclareMathOperator{\initial}{in}
\DeclareMathOperator{\ZL}{Z}
\DeclareMathOperator{\I}{I}
\DeclareMathOperator{\normalset}{N}
\DeclareMathOperator{\Hankel}{H}
\DeclareMathOperator{\Toeplitz}{T}
\DeclareMathOperator{\Vandermonde}{V}
\DeclareMathOperator{\Cat}{C}
\DeclareMathOperator{\Hom}{Hom}
\DeclareMathOperator{\End}{End}
\DeclareMathOperator{\pspec}{\sigma_p}
\DeclareMathOperator{\cont}{C}
\DeclareMathOperator{\bigTheta}{\Theta}
\DeclareMathOperator{\laplace}{\Delta}
\DeclareMathOperator{\Pronyindex}{ind}
\DeclareMathOperator{\Mon}{Mon}
\DeclareMathOperator{\border}{\partial}
\DeclareMathOperator{\gaussian}{g}
\DeclareMathOperator{\chebexp}{txp}
\DeclareMathOperator{\chebExp}{Txp}
\providecommand{\cheb}{\textnormal{T}}
\providecommand{\ev}[2]{\mathop{\textnormal{ev}_{#1}^{#2}}}
\providecommand{\abs}[1]{\mathopen{\lvert}#1\mathclose{\rvert}}
\providecommand{\norm}[1]{\mathopen{\lVert}#1\mathclose{\rVert}}
\providecommand{\euler}{\textnormal{e}}
\providecommand{\unit}{\textnormal{e}}
\providecommand{\qquot}[1]{``#1''}
\providecommand{\nat}{\mathbb{N}}
\providecommand{\integ}{\mathbb{Z}}
\providecommand{\rat}{\mathbb{Q}}
\providecommand{\real}{\mathbb{R}}
\providecommand{\complex}{\mathbb{C}}
\providecommand{\torus}{\mathbb{T}}
\providecommand{\sphere}{\mathbb{S}}
\providecommand{\lcrc}[2]{\mathopen{[}#1,#2\mathclose{]}}
\providecommand{\prp}[2]{#1\left.\left.\middle\vert\right.\right.#2}
\providecommand{\sumprp}[2]{#1\left.\left.\vphantom{\sum}\middle\vert\right.\right.#2}
\providecommand{\set}[1]{\mathopen{\lbrace}{#1}\mathclose{\rbrace}}
\providecommand{\sumset}[1]{\mathopen{\Big\lbrace}{#1}\mathclose{\Big\rbrace}}
\providecommand{\pset}[2]{\set{\prp{#1}{#2}}}
\providecommand{\psumset}[2]{\sumset{\sumprp{#1}{#2}}}
\providecommand{\ringad}[1]{\mathopen{\lbrack}#1\mathclose{\rbrack}}
\providecommand{\fieldad}[1]{\mathopen{(}#1\mathclose{)}}
\providecommand{\restr}[2]{#1\vert_{#2}}
\providecommand{\of}[1]{\mathopen{(}#1\mathclose{)}}
\providecommand{\sumof}[1]{\mathopen{\Big(}#1\mathclose{\Big)}}
\providecommand{\sumoftext}[1]{\mathopen{\big(}#1\mathclose{\big)}}
\providecommand{\rb}[1]{\mathopen{(}#1\mathclose{)}}
\providecommand{\sumrb}[1]{\mathopen{\Big(}#1\mathclose{\Big)}}
\providecommand{\divides}{\mathrel{{\mid}}}
\providecommand{\mgen}[1]{\mathopen{\langle}{#1}\mathclose{\rangle}}
\providecommand{\pmgen}[2]{\mgen{\prp{#1}{#2}}}
\providecommand{\igen}[1]{\mathopen{\langle}{#1}\mathclose{\rangle}}
\providecommand{\mul}{\mathbin{{\cdot}}}
\providecommand{\comp}{\mathbin{{\circ}}}
\providecommand{\transpose}[1]{{#1}^{\top}}
\providecommand{\mimpl}{\mathrel{{\Rightarrow}}}
\providecommand{\apriori}{a~priori}
\providecommand{\Apriori}{A~priori}
\providecommand{\confer}{cf.}
\providecommand{\wlogen}{w.l.o.g.}
\providecommand{\wrt}{w.r.t.}
\providecommand{\idest}{i.e.}
\providecommand{\eg}{e.g.}
\providecommand{\bigdirsum}{\bigoplus}
\providecommand{\defeq}{\mathrel{{\mathop:}{=}}}
\providecommand{\emb}{\mathrel{{\hookrightarrow}}}
\providecommand{\trb}[1]{\textup{(}#1\textup{)}}
\providecommand{\tbrck}[1]{\textup{[}#1\textup{]}}
\providecommand{\card}[1]{\mathopen{\lvert}{#1}\mathclose{\rvert}}
\providecommand{\isom}{\mathrel{{\cong}}}
\providecommand{\lbar}[1]{\overline{#1}}
\providecommand{\W}{\textnormal{w}}
\providecommand{\X}{\textnormal{x}}
\providecommand{\Y}{\textnormal{y}}
\providecommand{\Z}{\textnormal{z}}
\providecommand{\catP}{\calP}
\providecommand{\calC}{\mathcal{C}}
\providecommand{\calH}{\mathcal{H}}
\providecommand{\calI}{\mathcal{I}}
\providecommand{\calJ}{\mathcal{J}}
\providecommand{\calM}{\mathcal{M}}
\providecommand{\calO}{\mathcal{O}}
\providecommand{\calP}{\mathcal{P}}
\providecommand{\calT}{\mathcal{T}}
\providecommand{\degrevlex}{\textnormal{degrevlex}}
\providecommand{\FC}{\calC}
\providecommand{\FH}{\calH}
\providecommand{\FI}{\calI}
\providecommand{\FJ}{\calJ}
\providecommand{\FM}{\calM}
\providecommand{\FT}{\calT}
\providecommand{\harmonhomog}{\textnormal{harmH}}
\providecommand{\sphereharmon}{\textnormal{SH}}
\providecommand{\claimmimplnocolon}[2]{$\text{#1}\mimpl\text{#2}$}
\providecommand{\claimmimpl}[2]{\claimmimplnocolon{#1}{#2}:}
\newenvironment*{tlist}{\begin{enumerate}[label=\textup{(\alph*)},ref=\textup{(\alph*)}]}{\end{enumerate}}
\newenvironment*{tlist2}{\begin{enumerate}[label=\textup{(\arabic*)},ref=\textup{(\arabic*)}]}{\end{enumerate}}
\newenvironment*{ifflist}{\begin{enumerate}[label=\textup{(\roman*)},ref=\textup{(\roman*)}]}{\end{enumerate}}
\newenvironment*{qlist}{\begin{enumerate}[label=\textup{($\text{Q}_{\arabic*}$)},ref=\textup{($\text{Q}_{\arabic*}$)}]}{\end{enumerate}}
\title{Learning algebraic decompositions using Prony structures}
\author{Stefan~Kunis}
\address{Institute of Mathematics
and
Research Center of Cellular Nanoanalytics,
Osnabr\"uck University,
49069~Osnabr\"uck,
Germany}
\email{skunis@uos.de}
\author{Tim~R\"omer}
\address{Institute of Mathematics,
Osnabr\"uck University,
49069~Osnabr\"uck,
Germany}
\email{troemer@uos.de}
\author{Ulrich~von~der~Ohe}
\address{Dipartimento di Matematica,
Universit\`a degli Studi di Genova,
Via Dodecaneso~35,
16146~Genova,
Italy;
Marie Sklodowska-Curie fellow
of the
Istituto Nazionale di Alta Matematica}
\email{vonderohe@dima.unige.it}
\thanks{%
The third author was supported
by an
INdAM-DP-COFUND-2015/Marie~Sk\l{}odowska-Curie Actions
scholarship,
grant number~713485.
We
gratefully acknowledge
support by
the
MIUR-DAAD Joint Mobility Program
\trb{\qquot{PPP~Italien}}.%
}
\subjclass[2010]{Primary~%
13P25, 
94A12; 
Secondary~%
13P10, 
15B05, 
30E05, 
65F30
}
\date{\today}
\begin{document}

\begin{abstract}
We propose an algebraic framework generalizing several variants of Prony's method and explaining their relations.
This includes Hankel and Toeplitz variants of Prony's method for the decomposition of
multivariate exponential sums,
polynomials
\trb{\wrt~the monomial and Chebyshev bases},
Gau\ss{}ian sums,
spherical harmonic sums,
taking also into account whether they have their support on an algebraic set.
\end{abstract}

\maketitle

\section*{Introduction}

Learning decompositions of functions from their evaluations in terms of a given basis
and similar questions like the moment problem
are fundamental tasks in signal processing and related areas.

In~1795,
Prony proposed an algebraic approach
to give an answer to such a question
in the case of univariate exponential sums~%
\cite{Ric95}.
Classic applications of Prony's method include for example
Sylvester's method for Waring decompositions of binary forms~%
\cite{Syl51a,Syl51b}
and
Pad\'e approximation~%
\cite{WM63}.
Since then these tools have been further developed~%
\cite{PT10,PP13,PT13b,SP20+},
new applications have been found
\trb{see,~\eg,~%
\cite[Section~\textup{2.2}]{KL03}
for connections to the
Berlekamp-Massey algorithm},
and recently also
advances have been made on multivariate versions.
Direct attempts can be found in,~\eg,~%
\cite{PT13a,PPS15,KPRv16,Sau17,Mou18},
for methods based on projections to univariate exponential sums see,~\eg,~%
\cite{DI15,DI17,CL18}.
A numerical variant can be found in,~\eg,~%
\cite{EKPR19},
and further related results and applications in,~\eg,~%
\cite{DG91,GKS91,LS95,Cuy99,KY07,PT14,CH15+,CH18,JLM19,HS20+}.

As important as \trb{approximate} algorithms undeniably are in practice,
at its core Prony's method is of a purely algebraic nature
which is the point of view of this article.
We introduce a general algebraic framework
called Prony structures
for reconstruction methods modeled after Prony's original idea.
Our approach allows a simultaneous treatment
of decomposition problems
in particular for
multivariate exponential sums,
polynomials
\trb{\wrt~the monomial and Chebyshev bases},
Gau\ss{}ian sums,
and
eigenvector sums of linear operators.

To describe the main task,
consider a vector space~$V$~of functions with a distinguished basis~$B$.
The goal is to decompose an arbitrary function $f\in V$ into a linear combination of basis elements.
As a constraint for this it is only allowed to use evaluations of~$f$.

In typical Prony situations one has a way to identify basis elements with points in an affine space.
For example,
in the case of exponential sums the basis function~$\exp_b$~is identified with its base point~$b\in\complex^n$.
It is this identification that allows to describe the support of~$f$,~\idest~%
the used basis elements in the decomposition,
by polynomial equations.
A key idea of Prony is to construct
Hankel
\trb{or Toeplitz}
matrices using evaluations of~$f$
to obtain the desired data from their kernels.

In our framework we assume that an identification as above is given
as part of the initial data.
Then suitable sequences of matrices are computed from evaluations of~$f$
which are constructed in a way such that their kernels eventually have to yield
systems of polynomial equations to determine the support of~$f$.

The article is organized as follows.
In
Section~\ref{section:prony-structures}
we fix the setup,
some notation,
and introduce our main definition of a Prony structure.
Besides the function space and the basis
as key parts of the data
it consists of families of matrices
and associated ideals
defined by their kernels.
These ideals are then used to attack
the decomposition problem.
We also recall briefly,
as a special case,
the fundamental example
of Prony's classic method.

In
Section~\ref{section:evaluation-map}
we
discuss
properties of
evaluation maps on vector spaces of polynomials
and their kernels,
see for
example~%
\cite{KPRv16}.
As one of our main results,
we prove in
Theorem~\ref{theorem:prony-structure-characterization}
a very useful characterization of Prony structures
in terms of factorizations through evaluation maps.

It can be seen that given some mild assumptions
the ideals of a Prony structure are zero-dimensional and radical
\trb{see
Corollary~\ref{corollary:asymptotically-radical}},
which leads to the natural question
to provide sufficient conditions
which guarantee that the ideals of kernels of evaluation maps have this property.

In
Section~\ref{section:evaluation-map-and-moeller}
we study this problem.
The main result of this section
\trb{Theorem~\ref{theorem:moeller-ideal-basis}}
proves a theorem of M\"oller
on Gr\"obner bases of zero-dimensional radical ideals
with interesting consequences for Prony structures.

In
Section~\ref{section:exponential-sums}
we discuss in particular in
Theorem~\ref{theorem:exponential-sums}
fundamental examples of Prony structures
based on the Hankel and Toeplitz matrices
defined by
exponential sums,
see for example~%
\cite{Mou18}
for their use in classic situations
related to Prony's methods.

Known reconstruction techniques
can be used for
sums of eigenvector of linear operators~%
\cite{PP13},
polynomials
\trb{\wrt~the monomial and various types of Chebyshev bases}~%
\cite{BT88,LS95,KY07,PT14,Mou18},
and
multivariate Gau\ss{}ians~%
\cite{PPS15}.
In
Section~\ref{section:applications}
we will see
in particular
that they
arise from
Prony structures
related to those for exponential sums.
In this section we also show relations between
the framework of Prony structures and previously known frameworks
for character~%
\cite{DG91}
and
eigenfunction sums~%
\cite{GKS91,PP13}.

\Apriori~knowledge can be that functions are supported
for example on a torus or a sphere,
see,~\eg,~%
\cite{KMPv18,KMv19}.
Classic techniques do not take this additional information into account.
As a novel approach we extend the notion of Prony structures
for functions supported on algebraic sets
to a relative version
in
Section~\ref{section:relative-prony-structures}.
A first key result is
a characterization of such structures in
Theorem~\ref{theorem:relative-prony-structure-characterization}.
In
Theorem~\ref{theorem:relative-prony-structures-construction}
and its corollaries
we discuss how to obtain Prony structures in this relative case.
Main examples include relative Prony structures for spaces of spherical harmonics.

Already in the existing literature,
projection techniques are used to
apply Prony's method,~see,~\eg~%
\cite{DI15,DI17,CL18}.
Related to this idea
is an observation in
Section~\ref{section:applications}
that a Prony structure may be
\qquot{induced} by another one on a different vector space.
The systematic point of view of these phenomena
is given by maps between Prony structures,
which we introduce in
Section~\ref{section:prony-maps}.
We discuss
projection methods,
Gau\ss{}ian sums
and other examples in terms of such maps.

\vfill

\textsc{Acknowledgments.}
We are grateful towards
H.\,M.~M\"oller
for inspiring discussions
related to these results,
in particular for allowing us to
include
Theorem~\ref{theorem:moeller-ideal-basis}
and its proof~%
\cite{Moe17}.
The third author is grateful for the warm hospitality
he received when visiting Osnabr\"uck on several occasions.
We thank the referees for their valuable remarks
and additional pointers to the literature
which led to considerable improvements of the article.

\section{Prony structures}%
\label{section:prony-structures}

Motivated by Prony's reconstruction method as well as its recent generalizations
we introduce a framework that enables us to treat several
of these variants simultaneously
and which can be applied in various contexts.
In this section
we begin by fixing some notation regarding evaluation maps for polynomials,
and then make our main definition of Prony structures.
The key point is to give a general formal setting
that captures the essence of Prony's method with the aim of
laying the foundation for a structural theory.

\begin{definition}%
\label{definition:evaluation-map}
Let~$K$~be a field,
$n\in\nat$,
$S\defeq K\ringad{\X}\defeq K\ringad{\X_1,\dots,\X_n}$,
and for
an arbitrary subset $D\subseteq\nat^n$
let
$S_D\defeq\mgen{\X^D}_K=\pmgen{\X^\alpha}{\alpha\in D}_K$.
For $X\subseteq K^n$
define
\begin{equation*}
\ev{D}{X}\colon S_D\to K^X\text{,}
\quad
p\mapsto\rb{p\of{x}}_{x\in X}\text{,}
\end{equation*}
and
\begin{equation*}
\I_D\of{X}\defeq\ker\of{\ev{D}{X}}\text{.}
\end{equation*}
We call~$\ev{D}{X}$~the
\emph{evaluation map at~$X$}
and~$\I_D\of{X}$~the
\emph{vanishing space of~$X$}~%
\wrt~$S_D$.
\end{definition}

Observe that
for $D=\nat^n$ we just have $S_D=S$
and
$\I\of{X}=\I_{\nat^n}\of{X}$
is the usual
\emph{vanishing ideal of~$X$}.
In this special situation we also set
${\ev{}{X}}\defeq{\ev{\nat^n}{X}}$.
Note that in general
we have
\begin{equation*}
\I_D\of{X}=\I\of{X}\cap S_D\text{.}
\end{equation*}
In
Section~\ref{section:evaluation-map}
we will state all results on evaluation maps and their kernels
that are relevant for this note.

In order to characterize basis elements of a vector space~$V$
through systems of polynomial equations
we need a way to identify them with points.
This will be achieved by an injection~$u$
as in the following definition.

\begin{definition}%
\label{definition:u-support}
Let~$F$~be a field,
$V$~be an $F$-vector space,
and~$B$~be an $F$-basis of~$V$.
For $f\in V$,
$f=\sum_{i=1}^rf_ib_i$ with
$f_1,\dots,f_r\in F\setminus\set{0}$
and
distinct $b_1,\dots,b_r\in B$,
let
\begin{equation*}
\text{$\basissupp_B\of{f}\defeq\set{b_1,\dots,b_r}$
\quad
and
\quad
$\basisrank_B\of{f}\defeq\card{\basissupp_B\of{f}}=r$}
\end{equation*}
denote the
\emph{support of~$f$}
and
\emph{rank of~$f$}~%
\trb{\wrt~$B$},
respectively.
For a field~$K$, $n\in\nat$,
and an injective map
$u\colon B\to K^n$ let
\begin{equation*}
\suppvar_u\of{f}\defeq \set{u\of{b_1},\dots,u\of{b_r}}\text{.}
\end{equation*}
We call $\suppvar_u\of{f}\subseteq K^n$
the
\emph{$u$-support}
and its elements the
\emph{support labels of~$f$}.
\end{definition}

In many situations we will choose $K=F$,
but for reasons of flexibility we
allow the choice of possibly different fields.
Unless mentioned otherwise,
we will assume
that~$F$,~$V$, $B$,~$K$,~$n$, and~$u$~are given as in
Definition~\ref{definition:u-support}.
In the following definition we introduce
the central notion of a Prony structure.

\begin{definition}%
\label{definition:prony-structure}
Given the setup of
Definition~\ref{definition:u-support},
let
$\FI=\rb{\FI_d}_{d\in\nat}$
be a sequence of finite sets
and
$\FJ=\rb{\FJ_d}_{d\in\nat}$
be a sequence of finite subsets of~$\nat^n$.
Let
$f\in V$
and
\begin{equation*}
P\of{f}=\rb{P_d\of{f}}_{d\in\nat}\in\prod_{d\in\nat}K^{\FI_d\times\FJ_d}\text{,}
\end{equation*}
\idest,~a family of matrices with
$P_d\of{f}\in K^{\FI_d\times\FJ_d}$
for all $d\in\nat$.
We call~$P\of{f}$ a
\emph{Prony structure for~$f$}
if there is a $c\in\nat$
such that for all $d\in\nat$
with $d\ge c$
one has
\begin{equation}%
\label{equation:definition:prony-structure}
\text{$\ZL\of{\ker P_d\of{f}}=\suppvar_u\of{f}$
\quad
and
\quad
$\I_{\FJ_d}\of{\suppvar_u\of{f}}\subseteq\ker\of{P_d\of{f}}$.}
\end{equation}
Here we identify
$p\in\ker P_d\of{f}\subseteq K^{\FJ_d}$
with the polynomial
$\sum_{\alpha\in\FJ_d}p_\alpha\X^\alpha\in K\ringad{\X_1,\dots,\X_n}$
and
$\ZL\of{\cdot}$ takes the zero~locus of a set of polynomials.
See
Remark~\ref{remark:quasi-prony-structures}
for a discussion of the second condition,
which is not implied by the other.

The least $c\in\nat$
such that
the conditions in~\eqref{equation:definition:prony-structure}
hold for all $d\ge c$
is called
\emph{Prony index of~$f$}
or simply
\emph{$P$-index of~$f$},
denoted by
$\Pronyindex_P\of{f}$.

If for every $f\in V$ a Prony structure~$P\of{f}$~for~$f$~is given,
then we call~$P$~a
\emph{Prony structure on~$V$}.
\end{definition}

\begin{remark}
A key point of a Prony structure~$P$ on~$V$
is that the idea of Prony's method works,
\idest~to compute
the support of a given $f\in V$~\wrt~the basis~$B$
through a system of polynomial equations.
More precisely,
one can
perform the following \trb{pseudo-}algorithm:
\begin{enumerate}
\item
Choose
$d\in\nat$.
\item
Determine
$P_d\of{f}\in K^{\FI_d\times\FJ_d}$.
\item
Compute
$U\defeq\ker P_d\of{f}\subseteq K^{\FJ_d}$.
\item
Embed
$U\subseteq K\ringad{\X_1,\dots,\X_n}$.
\item
Compute
$Z\defeq\ZL\of{U}\subseteq K^n$.
\item
Compute
$u^{-1}\of{Z}\subseteq B$.
\end{enumerate}
If~$d$~is chosen large enough,
then the zero~locus~$Z$~is the $u$-support
and~$u^{-1}\of{Z}$~is the support of~$f$
\trb{and in particular these sets are finite}.
Note that for this strategy to work
it is important that the matrices~$P_d\of{f}$
can be determined from \qquot{standard information} on~$f$
\trb{such as evaluations if~$f$~is a function},
in particular without already knowing the support;
see also
Remark~\ref{remark:trivial-prony-structure}.
Often computation of the zero~locus
as well as a good choice of~$d$
turn out to be problematic steps.

In classic situations of Prony's method
the non-zero coefficients of~$f$~\wrt~$B$
can be computed in an additional step
by solving a system of linear equations
involving only standard information;
this system is finite since one has already computed the support.
We omit the discussion of this step here and in the following.
\end{remark}

Common options for the sequence~$\FJ=\rb{\FJ_d}_{d\in\nat}$
are
$\FJ=\FT$,
$\FJ=\FM$,
or
$\FJ=\FC$,
where
\begin{align*}
\FT_d
&\defeq
\psumset{\alpha\in\nat^n}{\sum_{j=1}^n\alpha_j\le d}\text{,}\\
\FM_d
&\defeq
\pset{\alpha\in\nat^n}{\max\pset{\alpha_j}{j=1,\dots,n}\le d}\text{,}\\
\text{and}
\quad
\FC_d
&\defeq
\psumset{\alpha\in\nat^n}{\prod_{j=1}^n\rb{\alpha_j+1}\le d}\text{.}
\end{align*}
Under the identification of~$K^{\FJ_d}$ with polynomials,
in $S=K\ringad{\X_1,\dots,\X_n}$
the choice
$\FJ=\FT$~corresponds to the subvector space of polynomials of
\emph{total degree}
at most~$d$,
and
$\FJ=\FM$~corresponds to the subvector space of polynomials of
\emph{maximal degree}
at most~$d$.
Choosing
$\FJ=\FC_d$,
the non-negative orthant of the
\emph{hyperbolic cross}
of order~$d$,
gives rise to a space of polynomials that is particularly
well-suited for zero-testing and interpolation of polynomials.
The earliest use of~$\FC_d$~in the context of Prony-like methods
that we are aware of is in articles by
Clausen, Dress, Grabmeier, and Karpinski~%
\cite{CDGK91}
and by
Dress and Grabmeier~%
\cite{DG91}.
For more recent applications see in particular
Sauer~%
\cite{Sau18}
and the preprint
Hubert-Singer~%
\cite{HS20+}.

Often one chooses $\FI=\FJ$,
$\FI_d=\FJ_{d-1}$,
or a similar relation between~$\FI$~and~$\FJ$.

We will also use the notation
\begin{equation*}
\text{$S_{\le d}\defeq S_{\FT_d}=\pmgen{\X^\alpha}{\alpha\in\FT_d}_K$,
\quad
${\ev{\le d}{X}}\defeq{\ev{\FT_d}{X}}$,
\quad
and
\quad
$\I_{\le d}\of{X}\defeq\I_{\FT_d}\of{X}$.}
\end{equation*}

\begin{remark}%
\label{remark:frameworks}
A framework for the decomposition of sums of characters of commutative monoids
has been proposed in
Dress-Grabmeier~%
\cite{DG91}
and
derivations for sums of eigenfunctions
\trb{or more generally eigenvectors}
of linear operators
have been developed in
Grigoriev-Karpinski-Singer~%
\cite{GKS91}
and
Peter-Plonka~%
\cite{PP13}.
We recast these frameworks
in the language of Prony structures in
Section~\ref{section:applications}.
See
Remark~\ref{remark:framework-inclusions}
for a diagrammatic overview.

While there is considerable overlap with the one proposed here,
the two approaches make different compromises
between generality and effectivity.
We aim at a formalization of the most general situation in which
Prony's strategy still works.
Our treatment is axiomatic
rather than the explicit constructions of~%
\cite{DG91,GKS91,PP13}.
While trading in some directness,
this abstraction also allows to stay within the language of linear algebra.
When dealing with applications,
a detour through character sums can seem unnatural
\trb{or,
as in the Chebyshev decomposition,
impossible}
given the concrete situation.
In this sense,
we also find our framework to be more effectively verifiable.
\end{remark}

\begin{remark}%
\label{remark:trivial-prony-structure}
For $f\in V$
let~$P_d\of{f}$~denote the matrix
of~$\ev{\le d}{\suppvar_u\of{f}}$~\wrt~the monomial basis
of~$K\ringad{\X}_{\le d}$
and the canonical basis
of~$K^{\suppvar_u\of{f}}$.
Then~$\rb{P_d\of{f}}_{d\in\nat}$~is a Prony structure for~$f$,
\confer~Lemma~\ref{lemma:vanishing-degree-existence}.

For practical computation of the support of~$f$ this Prony structure is useless,
since clearly
$P_d\of{f}$
is the Vandermonde-like matrix
$\Vandermonde_{\FT_d}^{\suppvar_u\of{f}}=\rb{x^\alpha}_{x\in\suppvar_u\of{f},\abs{\alpha}\le d}$
and knowing these
matrices
immediately implies knowledge of the $u$-support of~$f$.
This observation does however provide a possible strategy to construct
Prony structures that may be obtained from some available data,
see
Corollary~\ref{corollary:monomorphism-composed-with-evaluation}.
\end{remark}

We recall the classic
Prony's method for reconstructing univariate exponential sums,
which dates back to~1795~%
\cite{Ric95}.
It is the fundamental example of a Prony structure.

\begin{example}%
\label{example:classic-prony}
For $b\in\complex$
we call the function
\begin{equation*}
\exp_b\colon\nat\to\complex\text{,}
\quad
\alpha\mapsto b^\alpha\text{,}
\end{equation*}
\emph{exponential \trb{with base~$b$}}
and we call
$\complex$-linear combinations of exponentials
\emph{exponential sums}.
Here it is understood that~$0^0=1$.
We denote by
$B\defeq\pset{\exp_b}{b\in\complex}$
the set of all exponentials,
which is a $\complex$-basis of the vector space
\begin{equation*}
V
\defeq
{\Exp}
\defeq
\mgen{B}_\complex
=
\pset{f\colon\nat\to\complex}{\text{$f$~exponential sum}}\text{.}
\end{equation*}
Then the classic Prony problem is to determine
the coefficients~$f_i\in\complex\setminus\set{0}$ and the bases~$b_i\in\complex$
of a given exponential sum~$f=\sum_{i=1}^rf_i\exp_{b_i}\in{\Exp}$.
Of course,
the function
\begin{equation*}
u\colon B\to\complex\text{,}
\quad
\exp_b\mapsto b=\exp_b\of{1}\text{,}
\end{equation*}
is a bijection.
For
an exponential sum $f\in{\Exp}$
and $d\in\nat$,
consider the Hankel matrix
\begin{equation*}
\Hankel_d\of{f}
\defeq
\rb{f\of{\alpha+\beta}}_{\substack{\alpha=0,\dots,d-1\\\beta=0,\dots,d}}
\mathrel{\hphantom{\mathop:}{=}}
\begin{pmatrix}
f\of{0}    &  f\of{1}  &  \cdots  &  f\of{d}     \\
f\of{1}    &  f\of{2}  &  \cdots  &  f\of{d+1}   \\
\vdots     &  \vdots   &  \vdots  &  \vdots      \\
f\of{d-1}  &  f\of{d}  &  \cdots  &  f\of{2d-1}
\end{pmatrix}
\in
\complex^{d\times\rb{d+1}}\text{.}
\end{equation*}
Prony has shown
in his~1795~\emph{Essai}~%
\cite{Ric95}
that
$\Hankel$~is a Prony structure on~$\Exp$
and,
moreover,
for every $f\in{\Exp}$,
$\Pronyindex_{\Hankel}\of{f}=\basisrank_B\of{f}$.
This provides a method to compute
$\suppvar_u\of{f}=\ZL\of{\ker\Hankel_d\of{f}}\subseteq\complex$,
under the assumption that an upper bound $d=d_f\in\nat$ of $\basisrank_B\of{f}$ is known.
\trb{Multivariate} generalizations and variants of Prony's method
will be discussed in
Sections~\ref{section:exponential-sums}
and~\ref{section:applications}
\trb{see also
Peter-Plonka~%
\cite{PP13},
Kunis-Peter-R\"omer-von~der~Ohe~%
\cite{KPRv16},
Sauer~%
\cite{Sau17},
and
Mourrain~%
\cite{Mou18}}.
\end{example}

\begin{remark}%
\label{remark:quasi-prony-structures}
One might be tempted to remove
the technical
\emph{vanishing space condition}
\begin{equation*}
\I_{\FJ_d}\of{\suppvar_u\of{f}}\subseteq\ker\of{P_d\of{f}}
\end{equation*}
from
Definition~\ref{definition:prony-structure}.
For the sake of discussion,
call~$P$~a
\emph{quasi Prony structure for~$f$}
if~$P$~satisfies all the conditions
of a Prony structure for~$f$
in
Definition~\ref{definition:prony-structure}
with the only possible exception of
the vanishing space condition.
We observe the following:
\begin{tlist}
\item
All practically relevant examples of quasi Prony structures
that
we are aware of
are indeed Prony structures.
\item
One of the main reasons why we include the vanishing space condition
in the definition of Prony structures
is that the analogues of
several of our statements on Prony structures do not hold
or are not known to hold for quasi Prony structures;
see,
for example,
Theorem~\ref{theorem:prony-structure-characterization}
and
Theorem~\ref{theorem:relative-prony-structures-construction}.
\item
An \qquot{artificial} example of a quasi Prony structure that is not a Prony structure:
For $d\in\nat$ let
$\FI_d=\set{0,1}$,
$\FJ_d=\set{0,1,2}$,
and
$P_d
\defeq
\begin{pmatrix}
1  &  0  &  0  \\
0  &  1  &  0
\end{pmatrix}
\in
\complex^{\FI_d\times\FJ_d}$.
Then
$\ker\of{P_d}=\mgen{\X^2}_\complex\subseteq\complex\ringad{\X}$,
so $\ZL\of{\ker\of{P_d}}=\ZL\of{\X^2}=\set{0}$,
hence $P=\rb{P_d}_{d\in\nat}$ is a quasi Prony structure for~$f\defeq\exp_0\in{\Exp}$
\trb{\confer~Example~\ref{example:classic-prony}}.
Since $\X\in\I_{\FJ_d}\of{0}\setminus\ker\of{P_d}$ for all~$d$,
$P$~is not a Prony structure for~$f$.
\end{tlist}
\end{remark}

\begin{remark}%
\label{remark:radical-ideals}
~
\begin{tlist}
\item
The generalization of Prony's problem to
polynomial-exponential sums
\trb{sums of functions
$\alpha\mapsto p\of{\alpha}\exp_b\of{\alpha}$
with polynomials~$p$},
also known as \qquot{multiplicity case},
can be found in the univariate case
in
Henrici~%
\cite[Theorem~\textup{7.2}\,\textup{c}]{Hen74}.
Further developments such as
a characterization of sequences that allow interpolation
by polynomial-exponential sums
have been obtained by~%
Sidi~%
\cite{Sid82}
and
a variant based on an associated generalized eigenvalue problem
is given in~%
Lee~%
\cite{Lee07},
see also
Peter-Plonka~%
\cite[Theorem~\textup{2.4}]{PP13}
and
Stampfer-Plonka~%
\cite{SP20+}.
For generalizations of many of these results
to the multivariate setting see
Mourrain~%
\cite{Mou18}.
It would be interesting to extend
the notion of Prony structures to also include these cases.
We leave this for future work.
See also
Remark~\ref{remark:multiplicities}.
\item%
\label{remark:radical-ideals:algebraically-closed-field}
In general,
if~$P\of{f}$~is a Prony structure for~$f$
and~$K$~is algebraically closed,
then,
for all $d\ge\Pronyindex_P\of{f}$,
we have
$\rad\of{\igen{\ker P_d\of{f}}}
=
\I\of{\ZL\of{\ker P_d\of{f}}}
=
\I\of{\suppvar_u\of{f}}$
by Hilbert's Nullstellensatz.
It is an interesting problem whether always or under which conditions the ideal
$\igen{\ker P_d\of{f}}$
is already a radical ideal.
We return to this question in
Section~\ref{section:evaluation-map-and-moeller}
where we provide partial answers
also over not necessarily algebraically closed fields.
\end{tlist}
\end{remark}

\section{Prony structures and the evaluation map}%
\label{section:evaluation-map}

In this section we recall
some well-known properties of
evaluation maps on vector spaces of polynomials
and their kernels.
Since they are the vector spaces of polynomials vanishing on a set~$X\subseteq K^n$,
these kernels play a crucial role
in the theory and application of Prony structures,
which will be made precise in
Theorem~\ref{theorem:prony-structure-characterization}.

We provide in this section the essential facts.
Related issues will be studied in more detail in
Section~\ref{section:evaluation-map-and-moeller}.

\begin{lemma}%
\label{lemma:vanishing-degree-existence}
Let $X\subseteq K^n$.
Then
there is a $d\in\nat$
with
$\igen{\I_{\le d}\of{X}}=\I\of{X}$.
For finite~$X$~this implies
$\ZL\of{\I_{\le d}\of{X}}=X$.
\end{lemma}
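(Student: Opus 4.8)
The plan is to exploit the Noetherianity of $S=K\ringad{\X}$. First I would invoke the Hilbert basis theorem to write $\I\of{X}=\igen{g_1,\dots,g_m}$ for finitely many $g_1,\dots,g_m\in S$, and then set $d\defeq\max\set{\deg g_1,\dots,\deg g_m}$ (with the convention $\max\emptyset=0$, covering the case $\I\of{X}=\igen{1}=S$). Since each $g_i$ has total degree at most~$d$ and vanishes on~$X$, it lies in $\I\of{X}\cap S_{\le d}=\I_{\le d}\of{X}$; hence $\I\of{X}=\igen{g_1,\dots,g_m}\subseteq\igen{\I_{\le d}\of{X}}$. The reverse inclusion $\igen{\I_{\le d}\of{X}}\subseteq\I\of{X}$ is immediate because $\I_{\le d}\of{X}=\I\of{X}\cap S_{\le d}\subseteq\I\of{X}$ and $\I\of{X}$ is an ideal. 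Together these give $\igen{\I_{\le d}\of{X}}=\I\of{X}$.

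For the second assertion, note that $\ZL\of{\I_{\le d}\of{X}}=\ZL\of{\igen{\I_{\le d}\of{X}}}=\ZL\of{\I\of{X}}$, so it remains to check $\ZL\of{\I\of{X}}=X$ when~$X$~is finite. The inclusion $X\subseteq\ZL\of{\I\of{X}}$ is trivial. For the converse I would take $y\in K^n\setminus X$ and, writing $X=\set{x_1,\dots,x_N}$, choose for each~$j$ a coordinate~$i_j$ with $\rb{x_j}_{i_j}\ne y_{i_j}$ (possible since $y\ne x_j$); then $p\defeq\prod_{j=1}^N\rb{\X_{i_j}-\rb{x_j}_{i_j}}$ lies in $\I\of{X}$ but satisfies $p\of{y}\ne 0$, so $y\notin\ZL\of{\I\of{X}}$. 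Hence $\ZL\of{\I\of{X}}=X$, and combining with the displayed chain of equalities yields $\ZL\of{\I_{\le d}\of{X}}=X$.

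As for the main obstacle: there is none of real substance. The statement is essentially a repackaging of the Hilbert basis theorem together with the elementary fact that a finite subset of~$K^n$ is Zariski closed. The only point that requires a moment's care is that this last fact must be proved directly, as above, rather than deduced from a Nullstellensatz, since~$K$~is not assumed algebraically closed; the explicit separating polynomial~$p$ takes care of this.
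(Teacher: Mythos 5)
Your proof is correct and follows exactly the paper's (much terser) argument: Noetherianity of~$S$ gives finite generation of~$\I\of{X}$, whence a degree bound~$d$, and the second claim reduces to the fact that a finite subset of~$K^n$ is Zariski closed, which you rightly prove directly via a separating polynomial rather than via a Nullstellensatz. No gaps.
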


\begin{proof}
This
follows
immediately
from the fact
that $S=K\ringad{\X_1,\dots,\X_n}$ is Noetherian
and thus~$\I\of{X}$~is finitely generated for $X\subseteq K^n$.
If~$X$~is finite, then it is Zariski closed.
\end{proof}

\begin{corollary}%
\label{corollary:monomorphism-composed-with-evaluation}
Let $X\subseteq K^n$ be finite.
Then for any $K$-vector space~$W$
and injective $K$-linear map $i\colon K^X\emb W$
one has
$\I_{\le d}\of{X}=\ker\of{i\comp{\ev{\le d}{X}}}$.
In particular,
$\ZL\of{\ker\of{i\comp{\ev{\le d}{X}}}}=X$
for all large~$d$.
The following diagram illustrates the situation.
\begin{equation*}
\begin{tikzcd}
|[alias=Sd]|  S_{\le d}  &&&  |[alias=KX]|  K^X  \\
                         &&&  |[alias=W ]|  W
\ar[  "\ev{\le d}{X}"           ,  from=Sd  ,  to=KX  ,        ]
\ar[  "i"                       ,  from=KX  ,  to=W   ,  hook  ]
\ar[  "i\comp{\ev{\le d}{X}}"'  ,  from=Sd  ,  to=W   ,        ]
\end{tikzcd}
\end{equation*}
\end{corollary}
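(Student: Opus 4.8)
The plan is to reduce the statement to Lemma~\ref{lemma:vanishing-degree-existence} via an elementary kernel computation. First I would observe that since $i$ is injective, $\ker(i\comp\ev{\le d}{X})=\ker(\ev{\le d}{X})$: indeed, for $p\in S_{\le d}$ we have $i(\ev{\le d}{X}(p))=0$ if and only if $\ev{\le d}{X}(p)=0$, because $i$ sends the nonzero vector $\ev{\le d}{X}(p)$ to a nonzero vector whenever $\ev{\le d}{X}(p)\ne 0$. By Definition~\ref{definition:evaluation-map} the right-hand side is exactly $\I_{\le d}(X)$, so $\I_{\le d}(X)=\ker(i\comp\ev{\le d}{X})$, which is the first assertion, and it holds for \emph{every} $d\in\nat$, not just for large~$d$.

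For the ``in particular'' part I would invoke Lemma~\ref{lemma:vanishing-degree-existence}, applicable since $X\subseteq K^n$ is finite: there is a $d_0\in\nat$ with $\igen{\I_{\le d_0}(X)}=\I(X)$, and for such~$d_0$ the lemma also gives $\ZL(\I_{\le d_0}(X))=X$. Moreover the chain $\I_{\le d}(X)\subseteq\I_{\le d+1}(X)\subseteq\cdots$ (each is $\I(X)\cap S_{\le d}$, and the $S_{\le d}$ are nested) shows that once $\igen{\I_{\le d_0}(X)}=\I(X)$ holds it holds for all $d\ge d_0$; hence $\ZL(\I_{\le d}(X))=X$ for all $d\ge d_0$, i.e. for all large~$d$. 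Combining with the kernel identity from the previous step yields $\ZL(\ker(i\comp\ev{\le d}{X}))=X$ for all large~$d$, as claimed. The commuting diagram simply records the factorization $i\comp\ev{\le d}{X}$ and requires no separate argument.

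There is essentially no obstacle here: the only mild point to be careful about is that the identification of $\ker(\ev{\le d}{X})$ with $\I_{\le d}(X)$ is the \emph{definition} rather than something to be proved, and that ``$\ZL$ of a subspace of polynomials'' means the zero locus of that set of polynomials, so applying Lemma~\ref{lemma:vanishing-degree-existence} (phrased for $\I_{\le d}(X)$) is immediate. If one wanted to be fully self-contained about the stabilization remark, one could also just note that $X=\ZL(\I(X))\subseteq\ZL(\I_{\le d}(X))\subseteq\ZL(\I_{\le d_0}(X))=X$ for $d\ge d_0$, forcing equality throughout.
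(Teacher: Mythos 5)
Your argument is correct and matches the paper's proof, which simply notes that the first statement "clearly holds" (precisely your observation that injectivity of $i$ gives $\ker\of{i\comp{\ev{\le d}{X}}}=\ker\of{\ev{\le d}{X}}=\I_{\le d}\of{X}$) and that the second follows from Lemma~\ref{lemma:vanishing-degree-existence}. You have merely filled in the details, including the stabilization remark, in the same way the paper intends.
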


\begin{proof}
The first statement clearly holds
and the second one follows from
Lemma~\ref{lemma:vanishing-degree-existence}.
\end{proof}

The following
result
on polynomial interpolation
is well-known.

\begin{lemma}%
\label{lemma:polynomial-interpolation}
Let $X\subseteq K^n$ be finite.
If $d\in\nat$
and
$d\ge\card{X}-1$
then
$\ev{\le d}{X}$
is surjective.
\end{lemma}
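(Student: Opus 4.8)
The statement to prove is Lemma~\ref{lemma:polynomial-interpolation}: for finite $X\subseteq K^n$ and $d\ge\card{X}-1$, the evaluation map $\ev{\le d}{X}\colon S_{\le d}\to K^X$ is surjective. The plan is to argue by exhibiting, for each point $x\in X$, a polynomial of total degree at most $\card{X}-1$ that evaluates to $1$ at $x$ and to $0$ at every other point of $X$; the existence of such an interpolation basis immediately gives surjectivity since then every function in $K^X$ is hit. First I would enumerate $X=\set{x^{(1)},\dots,x^{(m)}}$ with $m=\card{X}$. For a fixed index $k$ and each $j\neq k$, since $x^{(j)}\neq x^{(k)}$ there is a coordinate $i=i(j)$ with $x^{(j)}_i\neq x^{(k)}_i$, so the affine-linear polynomial $\ell_j\defeq(\X_{i(j)}-x^{(j)}_{i(j)})/(x^{(k)}_{i(j)}-x^{(j)}_{i(j)})\in S_{\le 1}$ satisfies $\ell_j(x^{(k)})=1$ and $\ell_j(x^{(j)})=0$. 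Then $p_k\defeq\prod_{j\neq k}\ell_j$ has total degree at most $m-1\le d$, lies in $S_{\le d}$, satisfies $p_k(x^{(k)})=1$, and $p_k(x^{(j)})=0$ for every $j\neq k$ because the factor $\ell_j$ vanishes there.

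With the polynomials $p_1,\dots,p_m\in S_{\le d}$ in hand, the key step is to observe that $\ev{\le d}{X}(p_k)=e_k$, the $k$-th standard basis vector of $K^X$ (under the identification of $K^X$ with $K^m$ via the chosen enumeration). Since the $e_k$ span $K^X$ and each lies in the image of $\ev{\le d}{X}$, the map is surjective. One can phrase this slightly more slickly: for an arbitrary $\varphi\in K^X$, the polynomial $\sum_{k=1}^m\varphi(x^{(k)})\,p_k\in S_{\le d}$ maps to $\varphi$, which is the classical Lagrange-type interpolation formula adapted to the multivariate grid.

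I do not expect any serious obstacle here; the only point requiring a moment's care is the choice of the separating coordinate $i(j)$ and checking that the resulting $\ell_j$ is well-defined (the denominator $x^{(k)}_{i(j)}-x^{(j)}_{i(j)}$ is nonzero by construction) and genuinely of degree one. The degree bound is then automatic since $p_k$ is a product of $m-1$ linear factors, and the hypothesis $d\ge m-1$ is used exactly once, to guarantee $p_k\in S_{\le d}$. It may also be worth remarking that this bound is not tight in general (for points in special position one can do better), but tightness is not claimed in the statement, so I would not dwell on it.
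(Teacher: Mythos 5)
Your proposal is correct and follows exactly the approach the paper takes: the paper's proof asserts the existence, for each $x\in X$, of a polynomial of degree $\card{X}-1$ that is $1$ at $x$ and $0$ on $X\setminus\set{x}$ (citing Cox--Little--O'Shea for the construction) and concludes by linearity. Your construction of the separating linear factors $\ell_j$ and the products $p_k$ is precisely the argument being cited, so you have simply written out the details the paper delegates to the reference.
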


\begin{proof}
It is easy to see
that given $x\in X$,
there is a polynomial $p\in S$
of degree~$\card{X}-1$
such that $p\of{x}=1$ and $p\of{y}=0$ for $y\in X\setminus\set{x}$
\trb{see,~\eg,~the proof of
Cox-Little-O'Shea~%
\cite[Chapter~\textup{5},
\S\,\textup{3},
Proposition~\textup{7}]{CLO15}}.
By linearity this concludes the proof.
\end{proof}

As the main result of this section
we obtain the following characterization
of Prony structures.

\begin{theorem}%
\label{theorem:prony-structure-characterization}
Given the setup of
Definition~\ref{definition:u-support},
let
$f\in V$,
$B$~an~$F$-basis of~$V$,
$u\colon B\to K^n$ injective,
$\FI$~a sequence of finite sets,
and
$\FJ$~a sequence of finite subsets of~$\nat^n$
with
$\FJ_d\subseteq\FJ_{d+1}$
for all large~$d$
and
$\bigcup_{d\in\nat}\FJ_d=\nat^n$.
Let
$Q\in\prod_{d\in\nat}K^{\FI_d\times\FJ_d}$.
Then the following are equivalent:
\begin{ifflist}
\item%
\label{theorem:prony-structure-characterization:prony-structure}
$Q$~is a Prony structure for~$f$;
\item%
\label{theorem:prony-structure-characterization:monomorphism}
For all large~$d$
there is
an injective $K$-linear map $\eta_d\colon K^{\suppvar_u\of{f}}\emb K^{\FI_d}$
such that the diagram
\begin{equation*}
\begin{tikzcd}
|[alias=KJd]|  K^{\FJ_d}  &&&  |[alias=KId  ]|  K^{\FI_d}             \\
|[alias=SJd]|  S_{\FJ_d}  &&&  |[alias=Ksupp]|  K^{\suppvar_u\of{f}}
\ar[  "Q_d"                           ,  from=KJd    ,  to=KId    ,          ,        ]
\ar[  "\isom"                         ,  from=KJd    ,  to=SJd    ,          ,        ]
\ar[  "\ev{\FJ_d}{\suppvar_u\of{f}}"  ,  from=SJd    ,  to=Ksupp  ,          ,        ]
\ar[  "\eta_d"'                       ,  from=Ksupp  ,  to=KId    ,  dashed  ,  hook  ]
\end{tikzcd}
\end{equation*}
is commutative;
\item%
\label{theorem:prony-structure-characterization:kernel}
For all large~$d$
we have
$\ker\of{Q_d}=\I_{\FJ_d}\of{\suppvar_u\of{f}}$.
\end{ifflist}
\end{theorem}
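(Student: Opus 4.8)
The plan is to prove the cycle of implications \ref{theorem:prony-structure-characterization:prony-structure} $\Rightarrow$ \ref{theorem:prony-structure-characterization:kernel} $\Rightarrow$ \ref{theorem:prony-structure-characterization:monomorphism} $\Rightarrow$ \ref{theorem:prony-structure-characterization:prony-structure}, since the kernel statement is the technically most transparent one to aim at first. Throughout, write $X\defeq\suppvar_u\of{f}$, a finite subset of $K^n$, and fix $d$ large enough that $\FJ_d\subseteq\FJ_{d+1}$ and (where needed) that the Prony conditions hold. Under the identification $K^{\FJ_d}\isom S_{\FJ_d}$, the kernel $\ker\of{Q_d}$ is a subspace of $S_{\FJ_d}$, and $\I_{\FJ_d}\of{X}=\I\of{X}\cap S_{\FJ_d}=\ker\of{\ev{\FJ_d}{X}}$ by definition.

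For \ref{theorem:prony-structure-characterization:prony-structure} $\Rightarrow$ \ref{theorem:prony-structure-characterization:kernel}: the vanishing space condition in \eqref{equation:definition:prony-structure} gives directly $\I_{\FJ_d}\of{X}\subseteq\ker\of{Q_d}$. For the reverse inclusion, let $p\in\ker\of{Q_d}$; the first condition $\ZL\of{\ker Q_d\of{f}}=X$ tells us $p$ vanishes on $X$ (since $p$ is one of the polynomials cutting out $\ZL\of{\ker Q_d\of{f}}$, and $X$ is contained in that zero locus), hence $p\in\I\of{X}\cap S_{\FJ_d}=\I_{\FJ_d}\of{X}$. This is where the vanishing space condition earns its keep — without it we would only get one inclusion, matching the discussion in Remark~\ref{remark:quasi-prony-structures}.

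For \ref{theorem:prony-structure-characterization:kernel} $\Rightarrow$ \ref{theorem:prony-structure-characterization:monomorphism}: assume $\ker\of{Q_d}=\I_{\FJ_d}\of{X}=\ker\of{\ev{\FJ_d}{X}}$. Then $Q_d$ and $\ev{\FJ_d}{X}$ (viewed on $S_{\FJ_d}$ via the fixed isomorphism) are two linear maps out of $S_{\FJ_d}$ with the same kernel, so by the universal property of the cokernel / quotient there is a unique $K$-linear $\eta_d\colon \operatorname{im}\of{\ev{\FJ_d}{X}}\to K^{\FI_d}$ with $Q_d=\eta_d\comp\ev{\FJ_d}{X}$, and it is injective on the image. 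The one remaining point is to extend $\eta_d$ to all of $K^{X}$ while keeping it injective: since for large $d$ the map $\ev{\FJ_d}{X}$ is surjective — this holds once $\FT_{\card X-1}\subseteq\FJ_d$, which is eventually true because $\bigcup_d\FJ_d=\nat^n$ and the $\FJ_d$ are increasing, combined with Lemma~\ref{lemma:polynomial-interpolation} — the image is already all of $K^{X}$, so no extension is needed and $\eta_d$ is the desired injective map making the diagram commute.

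For \ref{theorem:prony-structure-characterization:monomorphism} $\Rightarrow$ \ref{theorem:prony-structure-characterization:prony-structure}: given the commuting diagram with $\eta_d$ injective, injectivity yields $\ker\of{Q_d}=\ker\of{\eta_d\comp\ev{\FJ_d}{X}}=\ker\of{\ev{\FJ_d}{X}}=\I_{\FJ_d}\of{X}$. This immediately gives the vanishing space condition $\I_{\FJ_d}\of{X}\subseteq\ker\of{Q_d}$. It also gives $\ZL\of{\ker Q_d\of{f}}=\ZL\of{\I_{\FJ_d}\of{X}}$, and for $d$ large enough that $\FT_d'\subseteq\FJ_d$ with $d'$ the bound from Lemma~\ref{lemma:vanishing-degree-existence} for the finite set $X$, we get $\ZL\of{\I_{\FJ_d}\of{X}}\subseteq\ZL\of{\I_{\le d'}\of{X}}=X$; the reverse inclusion $X\subseteq\ZL\of{\I_{\FJ_d}\of{X}}$ is trivial since every element of $\I_{\FJ_d}\of{X}$ vanishes on $X$. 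Hence $\ZL\of{\ker Q_d\of{f}}=X=\suppvar_u\of{f}$, completing the verification that $Q$ is a Prony structure for $f$.

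The main obstacle is the bookkeeping of \emph{how large $d$ must be}: each implication needs a slightly different eventual condition on $\FJ_d$ (containing $\FT_{\card X-1}$ for surjectivity, containing $\FT_{d'}$ for the Nullstellensatz-free zero-locus argument), and one must check these are all implied by the hypotheses $\FJ_d\subseteq\FJ_{d+1}$ for large $d$ and $\bigcup_d\FJ_d=\nat^n$ — essentially because an increasing union exhausting $\nat^n$ eventually contains any fixed finite set. None of this is deep, but stating the theorem cleanly requires absorbing all these thresholds into the phrase "for all large $d$."
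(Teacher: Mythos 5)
Your proof is correct, and it traverses the cycle in the opposite direction from the paper: you prove \ref{theorem:prony-structure-characterization:prony-structure}$\mimpl$\ref{theorem:prony-structure-characterization:kernel}$\mimpl$\ref{theorem:prony-structure-characterization:monomorphism}$\mimpl$\ref{theorem:prony-structure-characterization:prony-structure}, whereas the paper does \ref{theorem:prony-structure-characterization:prony-structure}$\mimpl$\ref{theorem:prony-structure-characterization:monomorphism}$\mimpl$\ref{theorem:prony-structure-characterization:kernel}$\mimpl$\ref{theorem:prony-structure-characterization:prony-structure}. The substantive difference is in how the Prony-structure hypothesis is consumed. The paper's step out of \ref{theorem:prony-structure-characterization:prony-structure} first constructs $\eta_d$ via surjectivity of $\ev{\FJ_d}{\suppvar_u\of{f}}$ and then proves its injectivity by lifting an element of $\ker\of{\eta_d}$ to a polynomial $p$ and running $p\in\ker\of{Q_d}\subseteq\I\of{\ZL\of{\ker Q_d}}=\I\of{\suppvar_u\of{f}}$. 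You instead apply the tautological inclusion $\ker\of{Q_d}\subseteq\I\of{\ZL\of{\ker Q_d}}$ directly, which together with the vanishing space condition yields the kernel equality \ref{theorem:prony-structure-characterization:kernel} in two lines, with no surjectivity needed at that stage; the interpolation Lemma~\ref{lemma:polynomial-interpolation} only enters afterwards, in \ref{theorem:prony-structure-characterization:kernel}$\mimpl$\ref{theorem:prony-structure-characterization:monomorphism}, to see that the factored map $\eta_d$ is defined on all of $K^{\suppvar_u\of{f}}$. Your closing step \ref{theorem:prony-structure-characterization:monomorphism}$\mimpl$\ref{theorem:prony-structure-characterization:prony-structure} then necessarily combines the paper's \ref{theorem:prony-structure-characterization:monomorphism}$\mimpl$\ref{theorem:prony-structure-characterization:kernel} with its Lemma~\ref{lemma:vanishing-degree-existence}-based zero-locus argument. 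The ingredients are identical (Lemmas~\ref{lemma:vanishing-degree-existence} and~\ref{lemma:polynomial-interpolation}, the universal property of quotients, and $U\subseteq\I\of{\ZL\of{U}}$), but your ordering isolates the kernel identity as the pivot and makes the exit from condition \ref{theorem:prony-structure-characterization:prony-structure} noticeably more economical; the paper's ordering has the advantage of producing the factorization $\eta_d$ immediately from the Prony hypothesis, which is the object used in subsequent constructions. Your attention to the various thresholds on $d$ is also handled correctly.
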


\begin{proof}
\claimmimpl{\ref{theorem:prony-structure-characterization:prony-structure}}{\ref{theorem:prony-structure-characterization:monomorphism}}
By
Definition~\ref{definition:evaluation-map}
and
since~$Q$~is a Prony structure for~$f$,
for all large~$d$
we have
\begin{equation*}
\ker\of{\ev{\FJ_d}{\suppvar_u\of{f}}}
=
\I_{\FJ_d}\of{\suppvar_u\of{f}}
\subseteq
\ker\of{Q_d}\text{.}
\end{equation*}
By the hypotheses on~$\FJ$,
$\FT_{\card{\suppvar_u\of{f}}}\subseteq\FJ_d$
for all large~$d$.
Then
$\ev{\FJ_d}{\suppvar_u\of{f}}$
is surjective
by
Lemma~\ref{lemma:polynomial-interpolation}.
Together,
these facts imply the existence of
$K$-linear maps~$\eta_d$
such that the required diagrams are commutative.

It remains to show that~$\eta_d$~is injective for all large~$d$.
Let $c\in\nat$ be such that
for all $d\ge c$
we have that
\begin{equation*}
\text{$\ZL\of{\ker Q_d}=\suppvar_u\of{f}$,
\quad
$\ev{\FJ_d}{\suppvar_u\of{f}}$ is surjective,
\quad
and~$\eta_d$~exists.}
\end{equation*}
Let $v\in\ker\of{\eta_d}$.
By surjectivity of~$\ev{\FJ_d}{\suppvar_u\of{f}}$
we have
$\ev{\FJ_d}{\suppvar_u\of{f}}\of{p}=v$
for some $p\in S_{\FJ_d}$.
Then
$Q_d\of{p}
=
\eta_d\of{\ev{\FJ_d}{\suppvar_u\of{f}}\of{p}}
=
\eta_d\of{v}
=
0$.
Thus,
we have
\begin{equation*}
p
\in
\ker\of{Q_d}
\subseteq
\I\of{\ZL\of{\ker Q_d}}
=
\I\of{\suppvar_u\of{f}}
=
\ker\of{\ev{}{\suppvar_u\of{f}}}\text{.}
\end{equation*}
Hence,
$v=\ev{}{\suppvar_u\of{f}}\of{p}=0$.
Thus,~$\eta_d$~is injective.

\claimmimpl{\ref{theorem:prony-structure-characterization:monomorphism}}{\ref{theorem:prony-structure-characterization:kernel}}
Since~$\eta_d$~exists and is injective \trb{for all large~$d$},
we have
\begin{equation*}
\ker\of{Q_d}
=
\ker\of{\eta_d\comp\ev{\FJ_d}{\suppvar_u\of{f}}}
=
\ker\of{\ev{\FJ_d}{\suppvar_u\of{f}}}
=
\I_{\FJ_d}\of{\suppvar_u\of{f}}\text{.}
\end{equation*}

\claimmimpl{\ref{theorem:prony-structure-characterization:kernel}}{\ref{theorem:prony-structure-characterization:prony-structure}}
By our
hypothesis
and
Lemma~\ref{lemma:vanishing-degree-existence},
for all large~$d$
we have
\begin{equation*}
\ZL\of{\ker Q_d}
=
\ZL\of{\I_{\FJ_d}\of{\suppvar_u\of{f}}}
=
\ZL\of{\I\of{\suppvar_u\of{f}}}
=
\suppvar_u\of{f}\text{.}
\end{equation*}
The vanishing space condition
in
Definition~\ref{definition:prony-structure}\,\eqref{equation:definition:prony-structure}
is obviously satisfied.
\end{proof}

The art of constructing a
\qquot{computable}
Prony structure
for a given
$f\in V$
and the very heart of Prony's method
is to find an injective $K$-linear map $\eta_d\colon K^{\suppvar_u\of{f}}\emb W_d$
into a $K$-vector space~$W_d$
such that
\trb{a matrix of}
the composition
\begin{equation*}
P_d\of{f}\defeq\eta_d\comp{\ev{\le d}{\suppvar_u\of{f}}}\colon S_{\le d}\to W_d
\end{equation*}
can be computed from standard data of~$f$.

\begin{remark}%
\label{remark:prony-structure-for-generating-systems}
Let~$B$~be a generating subset of~$V$.
One can formulate a variation of
Theorem~\ref{theorem:prony-structure-characterization}
insofar that if one of the
conditions~\ref{theorem:prony-structure-characterization:monomorphism}
or~\ref{theorem:prony-structure-characterization:kernel}
holds
for all $f\in V$
and all representations
$f=\sum_{b\in M}f_bb$ with $M\subseteq B$ finite and $f_b\in F\setminus\set{0}$,
and replacing each occurrence of~$\suppvar_u\of{f}$ by~$u\of{M}$,
then~$B$~is a basis of~$V$~and~$Q_d\of{f}$~induces a Prony structure on~$V$.
Indeed,
$\ZL\of{\ker Q_d\of{f}}=M$
implies that~$M$~is uniquely determined by~$Q_d\of{f}$,
which implies the desired conclusion.
\end{remark}

The following
Proposition~\ref{proposition:vanishing-degree-construction}\,\ref{proposition:vanishing-degree-construction:cardinality-works}
is
a version of
Lemma~\ref{lemma:vanishing-degree-existence}
that provides the upper bound $d=\card{X}$ for the
\qquot{stabilization index}
of the ascending sequence of ideals $\rb{\mgen{\I_{\le d}\of{X}}}_{d\in\nat}$.
In part~\ref{proposition:vanishing-degree-construction:cardinality-sharp}
it is shown
that $\card{X}-1$ is not in general an upper bound.

\begin{proposition}%
\label{proposition:vanishing-degree-construction}
The following holds:
\begin{tlist}
\item%
\label{proposition:vanishing-degree-construction:cardinality-works}
Let
$X\subseteq K^n$
be finite.
With $d\defeq\card{X}$
we have
\begin{equation*}
\igen{\I_{\le d}\of{X}}=\I\of{X}\text{.}
\end{equation*}
\item%
\label{proposition:vanishing-degree-construction:cardinality-sharp}
Let $K$~be an infinite field.
Then
for every $d\in\nat$
there is an $X\subseteq K^n$
with $\card{X}=d+1$
such that
$\igen{\I_{\le d}\of{X}}\subsetneqq\I\of{X}$.
\end{tlist}
\end{proposition}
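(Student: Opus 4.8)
For part \ref{proposition:vanishing-degree-construction:cardinality-works}, the plan is to show that every $f \in \I(X)$ already lies in the ideal generated by $\I_{\le d}(X)$ with $d = \card{X}$. First I would invoke Lemma \ref{lemma:polynomial-interpolation}: since $d = \card X \ge \card X - 1$, the evaluation map $\ev{\le d}{X}$ is surjective. The key linear-algebra observation is that $\dim_K S_{\le d}/\I_{\le d}(X) = \dim_K K^X = \card X$; equivalently, one can pick a basis of $S_{\le d}/\I_{\le d}(X)$ consisting of (residue classes of) $\card X$ monomials. Now take an arbitrary $g \in \I(X)$. Using the monomial basis $\X^{\alpha_1}, \dots, \X^{\alpha_{\card X}}$ of the quotient, I would apply a division/reduction argument: reduce $g$ modulo a Gröbner basis of $\igen{\I_{\le d}(X)}$ (with respect to a degree-compatible order such as $\degrevlex$), writing $g = h + \sum_i c_i \X^{\alpha_i}$ with $h \in \igen{\I_{\le d}(X)}$ and the remainder supported on the normal set. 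Since $g$ and $h$ both vanish on $X$, the remainder $\sum_i c_i \X^{\alpha_i}$ vanishes on $X$, hence represents $0$ in $S_{\le d}/\I_{\le d}(X)$, which forces all $c_i = 0$ because the $\X^{\alpha_i}$ are a basis of that quotient. Therefore $g = h \in \igen{\I_{\le d}(X)}$. The one point requiring care is that the normal set of a degree-compatible Gröbner basis of $\igen{\I_{\le d}(X)}$ consists of monomials of degree $< d = \card X$ (so they already lie in $S_{\le d}$, matching the dimension count); this follows from the standard fact that for a zero-dimensional ideal the normal set has exactly $\card X$ elements and is a staircase, combined with Lemma \ref{lemma:vanishing-degree-existence}.

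For part \ref{proposition:vanishing-degree-construction:cardinality-sharp}, the plan is to exhibit, for each $d$, a set $X \subseteq K^n$ of $d+1$ points on which some polynomial of total degree $> d$ vanishes but no polynomial of degree $\le d$ in the ideal generated by $\I_{\le d}(X)$ can produce it — i.e., $\I_{\le d}(X)$ is too small. The cleanest construction I would try is to put all $d+1$ points on a line. Concretely, since $K$ is infinite, pick $d+1$ distinct scalars $t_0, \dots, t_d \in K$ and set $X = \set{(t_0, 0, \dots, 0), \dots, (t_d, 0, \dots, 0)} \subseteq K^n$ (assuming $n \ge 1$; if $n = 1$ take $X = \set{t_0, \dots, t_d}$ and the statement is vacuous or handled trivially since $\I_{\le d}(X) = \igen{\prod (\X - t_i)}$ does generate — so one needs $n \ge 2$, and I would note the case $n=1$ separately or restrict attention to $n \ge 2$). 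For $n \ge 2$: the vanishing ideal $\I(X)$ is $\igen{\X_2, \dots, \X_n, \prod_{i=0}^d (\X_1 - t_i)}$. The polynomial $p = \prod_{i=0}^d(\X_1 - t_i)$ has degree $d+1$, so it is not in $S_{\le d}$. I claim $\I_{\le d}(X) = \igen{\X_2, \dots, \X_n}_K \cap S_{\le d}$ — that is, every polynomial of degree $\le d$ vanishing on $X$ already vanishes on the whole line $\set{\X_2 = \cdots = \X_n = 0}$, because its restriction to the line is a univariate polynomial of degree $\le d$ vanishing at $d+1$ points, hence zero. Consequently $\igen{\I_{\le d}(X)} \subseteq \igen{\X_2, \dots, \X_n}$, which does not contain $p$ (set $\X_2 = \cdots = \X_n = 0$: any element of $\igen{\X_2,\dots,\X_n}$ becomes $0$, while $p$ becomes $\prod(\X_1 - t_i) \ne 0$ as a polynomial). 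Hence $p \in \I(X) \setminus \igen{\I_{\le d}(X)}$, giving the strict inclusion.

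The main obstacle is part \ref{proposition:vanishing-degree-construction:cardinality-works}: making the bound $d = \card X$ (rather than the naive $d = \card X - 1$) rigorous requires the structural fact that a degree-compatible standard-monomial basis of a radical zero-dimensional ideal of $\card X$ points lives entirely in degrees $\le \card X - 1$, so that these monomials already span $S_{\le \card X}/\I_{\le \card X}(X)$ and the reduction step stays inside $S_{\le \card X}$. Part \ref{proposition:vanishing-degree-construction:cardinality-sharp} is comparatively routine once the colinear construction is chosen, the only subtlety being to dispatch the degenerate case $n = 1$ (where the claim fails and must be excluded or where the points cannot be "in general position" on a proper subvariety).
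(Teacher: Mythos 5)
Your part~\ref{proposition:vanishing-degree-construction:cardinality-sharp} is correct and is essentially the paper's construction: $d+1$ collinear points on the $\X_1$-axis. The paper argues slightly differently (any generating set $E$ of $\I\of{X}$ must contain a polynomial of degree $>d$, since otherwise restricting all of $E$ to the axis would force $X_1=K$), whereas you exhibit the explicit witness $\prod_i\rb{\X_1-t_i}\in\I\of{X}\setminus\igen{\X_2,\dots,\X_n}\supseteq\igen{\I_{\le d}\of{X}}$; both are fine for $n\ge2$. One factual slip: the claim does \emph{not} fail for $n=1$. There $\I_{\le d}\of{X}=\set{0}$, because a nonzero univariate polynomial of degree $\le d$ cannot vanish at $d+1$ points (in particular $\prod_i\rb{\X-t_i}$, having degree $d+1$, is not an element of $\I_{\le d}\of{X}$), so $\igen{\I_{\le d}\of{X}}=\set{0}\subsetneqq\I\of{X}$ and no case split is needed.

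The genuine gap is in part~\ref{proposition:vanishing-degree-construction:cardinality-works}, exactly at the step you flag as requiring care. You reduce $g\in\I\of{X}$ modulo a Gr\"obner basis of $J\defeq\igen{\I_{\le d}\of{X}}$ and need the remainder to lie in $S_{\le d}$, i.e.\ you need $\normalset\of{J}\subseteq\FT_{d-1}$. Your justification --- that the normal set of a zero-dimensional ideal has exactly $\card{X}$ elements --- is a property of the radical ideal $\I\of{X}$, not of $J$; knowing it for $J$ is equivalent to $\dim_KS/J=\card{X}=\dim_KS/{\I\of{X}}$, which together with $J\subseteq\I\of{X}$ already yields $J=\I\of{X}$, the statement to be proved. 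Lemma~\ref{lemma:vanishing-degree-existence} does not help here, since it only provides the identity for some unspecified degree. As written the argument is circular. It is repairable without changing your strategy: since $d-1=\card{X}-1$, Lemma~\ref{lemma:polynomial-interpolation} makes $\ev{\le d-1}{X}$ surjective, so for every monomial $t$ of degree exactly $d$ there is $p\in S_{\le d-1}$ with $t-p\in\I_{\le d}\of{X}$; for a degree-compatible order, $t=\initial\of{t-p}\in\initial\of{J}$, hence $\initial\of{J}$ contains every monomial of degree $\ge d$ and $\normalset\of{J}\subseteq\FT_{d-1}$. Your reduction then closes: the remainder $r$ is supported on $\normalset\of{J}\subseteq\FT_{d-1}$ and vanishes on $X$, so $r\in\I_{\le d}\of{X}\subseteq J$, and a nonzero such $r$ would give $\initial\of{r}\in\initial\of{J}\cap\normalset\of{J}=\emptyset$. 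For comparison, the paper does not prove part~\ref{proposition:vanishing-degree-construction:cardinality-works} at all but cites the proof of \cite[Theorem~3.1]{KPRv16}; the repaired argument above is close in spirit to Theorem~\ref{theorem:moeller-ideal-basis}.
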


\begin{proof}
\ref{proposition:vanishing-degree-construction:cardinality-works}
This is part of the proof of
Kunis-Peter-R\"omer-von~der~Ohe~%
\cite[Theorem~\textup{3.1}]{KPRv16}.

\ref{proposition:vanishing-degree-construction:cardinality-sharp}
Let $d\in\nat$.
Since~$K$~is infinite,
there exists
\begin{equation*}
\text{$X=\pset{\rb{x,0,\dots,0}\in K^n}{x\in X_1}$
for some $X_1\subseteq K$
with
$\card{X}=\card{X_1}=d+1$.}
\end{equation*}
Let $\I\of{X}=\igen{E}$
for some $E\subseteq S$.
We claim that there is a $p\in E$ with $\deg\of{p}>d$.

For $p\in S$,
let $\widetilde{p}\defeq p\of{\X_1,0,\dots,0}$.
Assume $\widetilde{p}=0$ for all $p\in E$.
For $y\in K$ we have
$\rb{y,0,\dots,0}\in\ZL\of{E}=\ZL\of{\I\of{X}}=X$
and hence
$y\in X_1$.
We get the contradiction $X_1=K$.

Thus there is a $p\in E$ with $\widetilde{p}\ne0$.
Since $\widetilde{p}\of{x}=0$ for $x\in X_1$
and $\card{X_1}=d+1$,
we have
\begin{equation*}
\deg\of{p}\ge\deg\of{\widetilde{p}}\ge d+1\text{.}
\end{equation*}
This concludes the proof.
\end{proof}

\section{Properties of the evaluation map and a theorem of M\"oller}%
\label{section:evaluation-map-and-moeller}

Continuing the discussion in
Section~\ref{section:evaluation-map}
we study in the following further properties of evaluation maps
and we provide
partial answers to the question raised in
Remark~\ref{remark:radical-ideals}\,\ref{remark:radical-ideals:algebraically-closed-field}.

This section is to some degree independent from the rest of the article.
The reader who wishes to continue directly with applications of Prony structures
and is not particularly concerned with
the ideal-theoretic issues treated here
can safely skip this section.
The consequences of the results of this section for Prony structures
are summarized in
Corollary~\ref{corollary:moeller-prony-structures}.

We are grateful towards
H.\,M.~M\"oller
for inspiring discussions
related to these results,
in particular for allowing us to
include
Theorem~\ref{theorem:moeller-ideal-basis}
and its proof~%
\cite{Moe17}.

As before,
let $S=K\ringad{\X_1,\dots,\X_n}$
be the polynomial ring in~$n$~indeterminates over the field~$K$.
In the following
we do not distinguish
between $\alpha\in\nat^n$
and the monomial $\X^\alpha\in\Mon\of{S}$.
For general facts about initial ideals and Gr\"obner bases
see,~\eg,~%
Cox-Little-O'Shea~%
\cite{CLO15}.

\begin{remark}
Let $X\subseteq K^n$ be finite.
A direct consequence of
Proposition~\ref{proposition:vanishing-degree-construction}\,\ref{proposition:vanishing-degree-construction:cardinality-works}
is that
for all $d\ge\card{X}$
the vanishing spaces
$\I_{\FT_d}\of{X}$
generate the same radical ideal in~$S$
\trb{namely,~$\I\of{X}$}.
\end{remark}

As a consequence we get immediately:

\begin{corollary}%
\label{corollary:asymptotically-radical}
Given the setup of
Definition~\ref{definition:prony-structure},
let~$P\of{f}$~be a Prony structure for $f\in V$
with $\FJ_d\subseteq\FJ_{d+1}$ for all large~$d$
and $\bigcup_{d\in\nat}\FJ_d=\nat^n$.
Then
for all large~$d$
\begin{equation*}
\igen{\ker P_d\of{f}}
=
\I\of{\suppvar_u\of{f}}\text{.}
\end{equation*}
In particular,
for all large~$d$,
$\igen{\ker P_d\of{f}}$~is a radical ideal in~$S$.
\end{corollary}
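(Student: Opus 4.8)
The plan is to reduce Corollary~\ref{corollary:asymptotically-radical} directly to results already at hand, namely Theorem~\ref{theorem:prony-structure-characterization} and Proposition~\ref{proposition:vanishing-degree-construction}\,\ref{proposition:vanishing-degree-construction:cardinality-works}. First I would invoke the characterization theorem: since $P\of{f}$ is a Prony structure for~$f$ and the sequence~$\FJ$ satisfies the hypotheses $\FJ_d\subseteq\FJ_{d+1}$ for all large~$d$ and $\bigcup_{d\in\nat}\FJ_d=\nat^n$, condition~\ref{theorem:prony-structure-characterization:kernel} applies, giving $\ker\of{P_d\of{f}}=\I_{\FJ_d}\of{\suppvar_u\of{f}}$ for all large~$d$. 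Thus it suffices to show $\igen{\I_{\FJ_d}\of{X}}=\I\of{X}$ for the finite set $X\defeq\suppvar_u\of{f}$ and all large~$d$.

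Next I would pass from the index sets~$\FJ_d$ to the total-degree sets~$\FT_e$. Because $\bigcup_{d}\FJ_d=\nat^n$ and the~$\FJ_d$ are eventually increasing, every finite subset of~$\nat^n$ — in particular $\FT_e$ for $e\defeq\card{X}$ — is contained in~$\FJ_d$ for all large~$d$. Hence $\I_{\FT_e}\of{X}=\I\of{X}\cap S_{\FT_e}\subseteq\I\of{X}\cap S_{\FJ_d}=\I_{\FJ_d}\of{X}$ for all large~$d$, using the identity $\I_D\of{X}=\I\of{X}\cap S_D$ from Definition~\ref{definition:evaluation-map}. Combined with the trivial inclusion $\I_{\FJ_d}\of{X}\subseteq\I\of{X}$, this sandwiches the generated ideals:
\begin{equation*}
\I\of{X}
=
\igen{\I_{\FT_e}\of{X}}
\subseteq
\igen{\I_{\FJ_d}\of{X}}
\subseteq
\I\of{X}\text{,}
\end{equation*}
where the first equality is exactly Proposition~\ref{proposition:vanishing-degree-construction}\,\ref{proposition:vanishing-degree-construction:cardinality-works} applied with $d=\card{X}$. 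Therefore $\igen{\ker P_d\of{f}}=\igen{\I_{\FJ_d}\of{X}}=\I\of{X}=\I\of{\suppvar_u\of{f}}$ for all large~$d$, and since the vanishing ideal of any subset of~$K^n$ is radical, the final assertion follows at once.

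The argument is essentially a bookkeeping of which quantifiers \qquot{for all large~$d$} combine, so there is no serious obstacle; the only point requiring a moment's care is making sure that the three \qquot{for all large~$d$} statements (validity of condition~\ref{theorem:prony-structure-characterization:kernel}, the eventual monotonicity of~$\FJ$, and the containment $\FT_e\subseteq\FJ_d$) are invoked simultaneously, which is legitimate since a finite intersection of cofinite subsets of~$\nat$ is cofinite. One should also note that Proposition~\ref{proposition:vanishing-degree-construction}\,\ref{proposition:vanishing-degree-construction:cardinality-works} is stated only for the total-degree truncation, which is precisely why the detour through~$\FT_e$ rather than working with~$\FJ_d$ directly is needed.
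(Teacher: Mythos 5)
Your proposal is correct and follows essentially the same route as the paper: invoke the kernel characterization from Theorem~\ref{theorem:prony-structure-characterization} to identify $\ker P_d\of{f}$ with $\I_{\FJ_d}\of{\suppvar_u\of{f}}$, then sandwich the generated ideal using $\FT_{\card{X}}\subseteq\FJ_d$ and Proposition~\ref{proposition:vanishing-degree-construction}. The only cosmetic difference is that the paper closes the sandwich via $\igen{\I_{\FJ_d}\of{X}}\subseteq\igen{\I_{\FT_e}\of{X}}\subseteq\I\of{X}$ for some $e$ with $\FJ_d\subseteq\FT_e$, while you use the (equally valid and slightly more direct) inclusion $\I_{\FJ_d}\of{X}\subseteq\I\of{X}$.
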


\begin{proof}
Let
$X\defeq\suppvar_u\of{f}$
and
$r\defeq\basisrank\of{f}=\card{X}$.
For all large~$d$
we have
$\FT_r\subseteq\FJ_d$
and
$\ker P_d\of{f}
=
\I_{\FJ_d}\of{X}
\supseteq
\I_{\FT_r}\of{X}$.
Since also $\FJ_d\subseteq\FT_e$ for an $e\in\nat$,
we have
\begin{equation*}
\I\of{X}
=
\igen{\I_{\FT_r}\of{X}}
\subseteq
\igen{\I_{\FJ_d}\of{X}}
\subseteq
\igen{\I_{\FT_e}\of{X}}
\subseteq
\I\of{X}\text{.}
\end{equation*}
This concludes the proof.
\end{proof}

Observe that~$\igen{\I_D\of{X}}_S$~is not a radical ideal in general.
This is shown already by the example
$n=1$,
$X=\set{0}$,
$D=\set{\X_1^2}$,
where
$\igen{\I_D\of{X}}=\igen{\X_1^2}_S$.
Note that for a given
$X\subseteq K^n$,
the map
$\ev{\le d}{X}$
can be surjective
also for $d<\card{X}-1$.
Furthermore,
$\I_{\le d}\of{X}$
could also generate a radical ideal for small~$d$.
The following simple example illustrates this.

\begin{example}%
\label{example:evaluation-surjectivity-and-radical}
Let
$X\defeq\set{\rb{0,0},\rb{1,0},\rb{0,1}}\subseteq K^2$.
One can see immediately that~$\ev{\le1}{X}$~is bijective
by considering its matrix
\begin{equation*}
\Vandermonde_{\le1}^X
=
\rb{t\of{x}}_{\substack{x\in X\\t\in\FT_1}}
=
\begin{pmatrix}
1  &  0  &  0  \\
1  &  1  &  0  \\
1  &  0  &  1
\end{pmatrix}
\in
K^{X\times\FT_1}\text{.}
\end{equation*}
Therefore,
$\ev{\le1}{X}$~is surjective
and
$\I_{\le1}\of{X}=\ker\of{\ev{\le1}{X}}=\set{0}$.
So $\igen{\I_{\le1}\of{X}}$ is the zero ideal of~$S$,
which is prime and thus radical
\trb{and of course not equal to~$\I\of{X}$}.
The vanishing ideal~$\I\of{X}$ of~$X$
is generated by
\begin{equation*}
\ker\of{\ev{\le2}{X}}
=
\mgen{\X_1\rb{\X_1-1},\X_2\rb{\X_2-1},\X_1\X_2}_K\text{.}
\end{equation*}
\end{example}

Having these facts in mind
we
consider special situations and
prove
results
related to
Corollary~\ref{corollary:asymptotically-radical}
and
Example~\ref{example:evaluation-surjectivity-and-radical}.

For a monomial order~$<$ on~$\Mon\of{S}$
and an ideal~$I$~of~$S$
we denote by
\begin{equation*}
\normalset_<\of{I}
\defeq
\Mon\of{S}\setminus\initial_<\of{I}
\end{equation*}
the
\emph{normal set of~$I$}.
From now on we omit the monomial order from the notation
and write,~\eg,~$\initial\of{I}$ and $\normalset\of{I}$
for~$\initial_<\of{I}$ and~$\normalset_<\of{I}$,
respectively.

For example,
for $I=\I\of{X}$ with $X\subseteq K^2$ as in
Example~\ref{example:evaluation-surjectivity-and-radical},
one has
\begin{equation*}
\text{$\initial\of{I}=\igen{\X_1^2,\X_1\X_2,\X_2^2}$
and thus
$\normalset\of{I}=\set{1,\X_1,\X_2}$}
\end{equation*}
for the degree reverse lexicographic order~$<$.

\begin{lemma}%
\label{lemma:moeller-basis-choice}
Let~$<$~be a monomial order on~$\Mon\of{S}$,
$X\subseteq K^n$ be finite
and
$I\defeq\I\of{X}$.
Then the following holds:
\begin{tlist}
\item%
\label{lemma:moeller-basis-choice:normal-set-bijection}
${\ev{\normalset\of{I}}{X}}\colon S_{\normalset\of{I}}\to K^X$
is bijective.
In particular,
$\card{\normalset\of{I}}=\card{X}$.
\item%
\label{lemma:moeller-basis-choice:choice}
Let $D\subseteq\Mon\of{S}$
be such that~${\ev{D}{X}}\colon S_D\to K^X$~is surjective.
Then there is a $C\subseteq\Mon\of{S}$
with the following properties:
\begin{tlist2}
\item%
\label{lemma:moeller-basis-choice:choice:subset}
$C\subseteq D$.
\item%
\label{lemma:moeller-basis-choice:choice:bijection}
${\ev{C}{X}}\colon S_C\to K^X$~is bijective.
In particular,
$\card{C}=\card{X}=\card{\normalset\of{I}}$.
\item%
\label{lemma:moeller-basis-choice:choice:order}
For all $t\in D\setminus C$
we have
$\ev{D}{X}\of{t}\in\pmgen{\ev{C}{X}\of{s}}{\text{$s\in C$ and $s<t$}}_K$.
\end{tlist2}
\end{tlist}
\end{lemma}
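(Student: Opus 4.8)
The plan is to prove the two parts separately: part~\ref{lemma:moeller-basis-choice:normal-set-bijection} will fall out of standard Gr\"obner basis theory, while part~\ref{lemma:moeller-basis-choice:choice} I would obtain by a greedy selection of monomials along the given monomial order, in the spirit of Gaussian elimination.

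For~\ref{lemma:moeller-basis-choice:normal-set-bijection}, I would start from the observation that $\ev{}{X}\colon S\to K^X$ is surjective by Lemma~\ref{lemma:polynomial-interpolation} and has kernel $\I\of{X}=I$, hence induces a $K$-linear isomorphism $\bar{e}\colon S/I\isom K^X$. Then I would invoke the basic fact that, for the fixed monomial order, the residue classes of the monomials in $\normalset\of{I}$ form a $K$-basis of $S/I$ \trb{Cox-Little-O'Shea~\cite{CLO15}}; equivalently, the restriction $\restr{\pi}{S_{\normalset\of{I}}}\colon S_{\normalset\of{I}}\to S/I$ of the canonical projection $\pi\colon S\to S/I$ is an isomorphism. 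Since $\ev{\normalset\of{I}}{X}=\bar{e}\comp\restr{\pi}{S_{\normalset\of{I}}}$, it is an isomorphism as a composition of two, and comparing $K$-dimensions yields $\card{\normalset\of{I}}=\card{X}$. \trb{Injectivity of $\ev{\normalset\of{I}}{X}$ is in any case elementary: a nonzero $p\in\I_{\normalset\of{I}}\of{X}=I\cap S_{\normalset\of{I}}$ would have its leading monomial lying simultaneously in $\normalset\of{I}$ and in the initial ideal $\initial\of{I}$, which is absurd.}

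For~\ref{lemma:moeller-basis-choice:choice}, I would enumerate $D=\set{t_1<\dots<t_m}$ increasingly \wrt~the monomial order and run the following greedy procedure: put $C\defeq\emptyset$, and for $i=1,\dots,m$ in turn adjoin $t_i$ to $C$ exactly when $\ev{D}{X}\of{t_i}$ is not in the $K$-span of $\pset{\ev{D}{X}\of{s}}{s\in C}$ for the $C$ at that stage. Then $C\subseteq D$, which is~\ref{lemma:moeller-basis-choice:choice:subset}. An induction on~$i$ shows that $\pset{\ev{D}{X}\of{s}}{s\in C}$ is always $K$-linearly independent and that, after step~$i$, its span equals the span of $\pset{\ev{D}{X}\of{t_j}}{j\le i}$; since $\ev{D}{X}$ is surjective, at $i=m$ these vectors form a $K$-basis of $K^X$. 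As $\ev{D}{X}\of{s}=\ev{C}{X}\of{s}$ for $s\in C$ and the monomials of $C$ are linearly independent in $S$, this gives that $\ev{C}{X}\colon S_C\to K^X$ is bijective, and $\card{C}=\card{X}=\card{\normalset\of{I}}$ by part~\ref{lemma:moeller-basis-choice:normal-set-bijection}; that is~\ref{lemma:moeller-basis-choice:choice:bijection}. Finally, for $t=t_i\in D\setminus C$ the monomial $t_i$ was rejected, so $\ev{D}{X}\of{t_i}$ lay in the span of $\pset{\ev{D}{X}\of{s}}{s\in C}$ at that stage; every such $s$ had already been processed, hence $s<t_i$, and so $\ev{D}{X}\of{t}\in\pmgen{\ev{C}{X}\of{s}}{\text{$s\in C$ and $s<t$}}_K$, which is~\ref{lemma:moeller-basis-choice:choice:order}.

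I do not expect a genuine obstacle here; the one place to be careful is the bookkeeping in part~\ref{lemma:moeller-basis-choice:choice}, namely that each rejected monomial is expressed through chosen monomials that are \emph{strictly smaller}, which is exactly what running the procedure in increasing order of the monomial order delivers.
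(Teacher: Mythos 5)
Your proof is correct. Part~\ref{lemma:moeller-basis-choice:normal-set-bijection} is essentially the paper's argument: the paper likewise quotes the isomorphism $S_{\normalset\of{I}}\isom S/I\isom K^X$ from Cox--Little--O'Shea and then checks injectivity of $\ev{\normalset\of{I}}{X}$ via the same leading-monomial contradiction you give in your parenthetical. For part~\ref{lemma:moeller-basis-choice:choice} your organization differs from the paper's: you run a forward greedy pass through $D$ in increasing order, accepting a monomial exactly when its evaluation vector is not yet in the span of the accepted ones, whereas the paper argues by downward induction on $k=\card{D}-\card{X}$, at each step picking a linear dependency among the vectors $\ev{D}{X}\of{t}$, deleting the \emph{largest} monomial $t_0$ occurring in it, and passing to $D_1=D\setminus\set{t_0}$. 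The two devices deliver the same crucial bookkeeping --- every rejected monomial is expressed through strictly smaller accepted ones --- and in fact produce the same set $C$; your version makes condition~\ref{lemma:moeller-basis-choice:choice:order} immediate from the selection rule, while the paper has to unwind the induction hypothesis once more to re-express the contributions of monomials in $D_1\setminus C$. Both are standard basis-extraction arguments, so this is a difference of presentation rather than of substance.
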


\begin{proof}
\ref{lemma:moeller-basis-choice:normal-set-bijection}
It is a standard fact that
$S_{\normalset\of{I}}\isom S/I\isom K^X$,
see,
for example,
Cox-Little-O'Shea~%
\cite[Chapter~\textup{5},
\S\,\textup{3},
Proposition~\textup{4}]{CLO15}.
Let $p\in\ker\of{\ev{\normalset\of{I}}{X}}$ and suppose that $p\ne0$.
Then $\initial\of{p}\in\initial\of{I}\cap\normalset\of{I}=\emptyset$,
a contradiction.
Thus,
$\ev{\normalset\of{I}}{X}$~is injective
and hence an isomorphism.

\ref{lemma:moeller-basis-choice:choice}
Note that necessarily $\card{D}\ge\card{X}$.
We prove the assertion by induction on $k=\card{D}-\card{X}\in\nat$.
If $k=0$,
then $\card{D}=\card{X}$.
So~$\ev{D}{X}$~is bijective
and $C=D$ works trivially.

Let $k\ge1$.
Then $\card{D}>\card{X}$
and
the elements~$\ev{D}{X}\of{t}$, $t\in D$,
are linearly dependent in~$K^X$.
Hence there are $\lambda_t\in K$ with
$\sum_{t\in D}\lambda_t\ev{D}{X}\of{t}=0$
and $\lambda_t\ne0$ for at least one~$t\in D$.
Let
\begin{equation*}
\text{$t_0\defeq\max\nolimits_<\pset{t\in D}{\lambda_t\ne0}$
and
$D_1\defeq D\setminus\set{t_0}$.}
\end{equation*}
Clearly,
${\ev{D_1}{X}}\colon S_{D_1}\to K^X$
is surjective
and $\card{D_1}-\card{X}=k-1$.
By induction hypothesis
there is a $C_1\subseteq D_1$
such that
\begin{equation*}
\text{${\ev{C_1}{X}}\colon S_{C_1}\to K^X$
is bijective
and
$\ev{D_1}{X}\of{t}\in\pmgen{\ev{C_1}{X}\of{s}}{\text{$s\in C_1$, $s<t$}}_K$
for all $t\in D_1\setminus C_1$.}
\end{equation*}
Clearly,
$C_1\subseteq D$.
We claim that~$C\defeq C_1$~fulfills the assertion
also for~$D$.
It remains to show
statement~\ref{lemma:moeller-basis-choice:choice:order}
for $t=t_0$.
For this let
$U\defeq\pmgen{\ev{C}{X}\of{s}}{\text{$s\in C$, $s<t_0$}}_K$.
From the linear dependency above
it follows that
\begin{equation*}
\text{$\ev{D}{X}\of{t_0}
=
\sum_{s\in D_1}\mu_s\ev{D_1}{X}\of{s}
=
\sum_{s\in C}\mu_s\ev{C}{X}\of{s}
+
\sum_{s\in D_1\setminus C}\mu_s\ev{D_1}{X}\of{s}$
with $\mu_s\in K$.}
\end{equation*}
Trivially $\sum_{s\in C}\mu_s\ev{C}{X}\of{s}\in U$
since
by the choice of~$t_0$
we have $s<t_0$ for all~$s\in D$ with $\mu_s\ne0$.
Also by the choice of~$t_0$
and the induction hypothesis mentioned above
we have
$\sum_{s\in D_1\setminus C}\mu_s\ev{D_1}{X}\of{s}
\in
U$.
Thus we have $\ev{D}{X}\of{t_0}\in U$.
This concludes the proof.
\end{proof}

\begin{remark}
Let the notation be as in
Lemma~\ref{lemma:moeller-basis-choice}\,\ref{lemma:moeller-basis-choice:choice}
and~$\ev{D}{X}$~surjective.
There are the following interesting questions:
\begin{qlist}
\item
Under which conditions
do we have
$\normalset\of{I}\subseteq D$?
\item
Under which conditions
does
$C=\normalset\of{I}$
satisfy~\ref{lemma:moeller-basis-choice:choice:subset},
\ref{lemma:moeller-basis-choice:choice:bijection},
and~\ref{lemma:moeller-basis-choice:choice:order}
in
Lemma~\ref{lemma:moeller-basis-choice}\,\ref{lemma:moeller-basis-choice:choice}?
\end{qlist}
Of course,
$C=\normalset\of{I}$
implies that
$\normalset\of{I}\subseteq D$.
A simple example that shows
$\normalset\of{I}\subseteq D$
does not hold in general is given
by
$n=1$,
$X=\set{1}\subseteq K$,
$D=\set{\X_1}\subseteq\Mon\of{S}$.
\end{remark}

\begin{definition}
Let~$<$~be a monomial order on~$\Mon\of{S}$
and $D\subseteq\Mon\of{S}$
be an order ideal~\wrt~divisibility.
We call~$D$~%
\emph{distinguished}
if for all $t\in D$ and $s\in\Mon\of{S}\setminus D$
we have $t<s$.

For an arbitrary non-empty order ideal~$D\subseteq\Mon\of{S}$
we define
\begin{equation*}
\border\of{D}\defeq\rb{\X_1D\cup\dots\cup\X_nD}\setminus D\text{.}
\end{equation*}
We also set
\begin{equation*}
\border\of{\emptyset}\defeq\set{1}\text{.}
\end{equation*}
Usually,
$\border\of{D}$
is called
the
\emph{border of~$D$}.
\end{definition}

\begin{example}
Our standard example and a counterexample related to distinguished order ideals are the following.
\begin{tlist}
\item
Let
$d\in\nat$.
Then
\begin{equation*}
D
\defeq
\FT_d
=
\pset{\alpha\in\nat^n}{\alpha_1+\dots+\alpha_n\le d}
\end{equation*}
is a distinguished order ideal~\wrt~$<_\degrevlex$
\trb{or any other degree compatible monomial order}.
\item
Clearly,
for any $n\in\nat$,
\begin{equation*}
D
\defeq
\FM_d
=
\pset{\alpha\in\nat^n}{\max\set{\alpha_1,\dots,\alpha_n}\le d}
\end{equation*}
is an order ideal.
For $n\ge2$ and $d\ge1$,
there is no monomial order~$<$ on~$\Mon\of{S}$
such that~$D$~is a distinguished order ideal~\wrt~$<$.
Indeed,
if
$\X_2>\X_1$,
then
$D\ni\X_2\X_1^d>\X_1\X_1^d=\X_1^{d+1}\notin D$.

It would be interesting to extend the results of this
section
to more general
settings.
Since this is outside the scope
of this article,
we omit this discussion here.
\end{tlist}
\end{example}

\begin{lemma}%
\label{lemma:distinguished-order-ideal}
Let~$<$~be a monomial order on~$\Mon\of{S}$,
$X\subseteq K^n$ be finite
and
$D\subseteq\Mon\of{S}$
be a distinguished order ideal~\wrt~$<$
such that~$\ev{D}{X}$~is surjective.
Let $I\defeq\I\of{X}$
and $C\subseteq D$ be as in
Lemma~\ref{lemma:moeller-basis-choice}\,\ref{lemma:moeller-basis-choice:choice}.
For $t\in\Mon\of{S}$
let $p_t\in S_C$
be the uniquely determined polynomial such that $\ev{C}{X}\of{p_t}=\ev{}{X}\of{t}$
and set $q_t\defeq t-p_t$.
Then the following holds:
\begin{tlist}
\item%
\label{lemma:distinguished-order-ideal:q_t}
For
$t\in\Mon\of{S}$
we have
$q_t\in I$.
\item%
\label{lemma:distinguished-order-ideal:supp(p_t)}
For
$t\in\Mon\of{S}\setminus C$
we have
$\polysupp\of{p_t}\subseteq\pset{s\in C}{s<t}$.
\item%
\label{lemma:distinguished-order-ideal:in(q_t)}
For
$t\in\Mon\of{S}\setminus C$
we have
$\initial\of{q_t}=t$.
\item%
\label{lemma:distinguished-order-ideal:supp(p)-C}
For
$p\in I\setminus\set{0}$
we have
$\polysupp\of{p}\nsubseteq C$,
\idest~$p\notin S_C$.
\item%
\label{lemma:distinguished-order-ideal:normal-set}
We have
$C=\normalset\of{I}$.
\end{tlist}
Here,
$\polysupp\of{p}$
denotes the support of~$p$~\wrt~the monomial basis of~$S$.
\end{lemma}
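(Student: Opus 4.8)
The plan is to establish the five parts in the order given, each relying on those before it. Throughout I will use that $\ev{C}{X}$ is simply the restriction of $\ev{}{X}$ to $S_C\subseteq S$ and $\ev{D}{X}$ the restriction to $S_D$, so all three maps agree on monomials lying in $C$, and $\ev{D}{X}$ agrees with $\ev{}{X}$ on monomials in $D$. For part~\ref{lemma:distinguished-order-ideal:q_t}, since $p_t\in S_C$ we have $\ev{}{X}\of{p_t}=\ev{C}{X}\of{p_t}=\ev{}{X}\of{t}$ by the defining property of $p_t$, hence $\ev{}{X}\of{q_t}=\ev{}{X}\of{t}-\ev{}{X}\of{p_t}=0$ and thus $q_t\in\ker\of{\ev{}{X}}=\I\of{X}=I$.

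For part~\ref{lemma:distinguished-order-ideal:supp(p_t)}, I would fix $t\in\Mon\of{S}\setminus C$ and distinguish two cases. If $t\in D\setminus C$, then Lemma~\ref{lemma:moeller-basis-choice}\,\ref{lemma:moeller-basis-choice:choice:order} provides $\lambda_s\in K$ with $\ev{}{X}\of{t}=\ev{D}{X}\of{t}=\sum_{s\in C,\,s<t}\lambda_s\,\ev{C}{X}\of{s}=\ev{C}{X}\sumof{\sum_{s\in C,\,s<t}\lambda_s s}$, and since $\ev{C}{X}$ is bijective (Lemma~\ref{lemma:moeller-basis-choice}\,\ref{lemma:moeller-basis-choice:choice:bijection}) this forces $p_t=\sum_{s\in C,\,s<t}\lambda_s s$, so $\polysupp\of{p_t}\subseteq\pset{s\in C}{s<t}$. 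If instead $t\in\Mon\of{S}\setminus D$, then because $D$ is a \emph{distinguished} order ideal every $s\in D$, in particular every $s\in C$, satisfies $s<t$; hence $\pset{s\in C}{s<t}=C\supseteq\polysupp\of{p_t}$ trivially. Part~\ref{lemma:distinguished-order-ideal:in(q_t)} is then immediate: for $t\notin C$ the monomial $t$ occurs in $q_t=t-p_t$ with coefficient $1$ and is not cancelled (as $\polysupp\of{p_t}\subseteq C$ and $t\notin C$), so $q_t\ne0$; by part~\ref{lemma:distinguished-order-ideal:supp(p_t)} every monomial of $p_t$ is strictly below $t$, whence $\initial\of{q_t}=t$.

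For part~\ref{lemma:distinguished-order-ideal:supp(p)-C}, suppose towards a contradiction that $p\in I\setminus\set{0}$ with $\polysupp\of{p}\subseteq C$, \idest~$p\in S_C$; then $\ev{C}{X}\of{p}=\ev{}{X}\of{p}=0$, contradicting injectivity of $\ev{C}{X}$. For part~\ref{lemma:distinguished-order-ideal:normal-set}, I would first show $\normalset\of{I}\subseteq C$: were there $t\in\normalset\of{I}\setminus C$, then by parts~\ref{lemma:distinguished-order-ideal:q_t} and~\ref{lemma:distinguished-order-ideal:in(q_t)} the element $q_t$ would lie in $I\setminus\set{0}$ with $\initial\of{q_t}=t$, so $t\in\initial\of{I}$, contradicting $t\in\normalset\of{I}=\Mon\of{S}\setminus\initial\of{I}$. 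Since $\card{\normalset\of{I}}=\card{X}=\card{C}$ by Lemma~\ref{lemma:moeller-basis-choice}\,\ref{lemma:moeller-basis-choice:normal-set-bijection} and~\ref{lemma:moeller-basis-choice:choice:bijection}, and all sets involved are finite, this inclusion is an equality.

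None of the individual steps is long, so I do not expect a serious obstacle. The one place requiring care is part~\ref{lemma:distinguished-order-ideal:supp(p_t)}: one must keep the three evaluation maps $\ev{C}{X}$, $\ev{D}{X}$, $\ev{}{X}$ carefully apart, and the \emph{distinguished} hypothesis must be invoked exactly in the case $t\notin D$ --- without it the support bound, and hence also part~\ref{lemma:distinguished-order-ideal:in(q_t)}, would fail. The closing cardinality argument in part~\ref{lemma:distinguished-order-ideal:normal-set} is the only point where the full strength of Lemma~\ref{lemma:moeller-basis-choice} is used.
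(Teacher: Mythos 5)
Your proof is correct and follows essentially the same route as the paper's: the same case split on $t\in D\setminus C$ versus $t\notin D$ in part~(b) using the distinguished hypothesis, the same injectivity argument for~(d), and the same cardinality comparison $\card{\normalset\of{I}}=\card{X}=\card{C}$ to upgrade the inclusion in~(e) to an equality.
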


\begin{proof}
\ref{lemma:distinguished-order-ideal:q_t}
This is an immediate consequence of the definition,
since
$I=\ker\of{\ev{}{X}}$.

\ref{lemma:distinguished-order-ideal:supp(p_t)}
If $t\in D\setminus C$
then there are $\mu_s\in K$
such that
\begin{equation*}
\ev{D}{X}\of{t}=\sum_{s\in C,s<t}\mu_s\ev{C}{X}\of{s}=\ev{C}{X}\sumof{\sum_{s\in C,s<t}\mu_ss}\text{.}
\end{equation*}
Hence $p_t=\sum_{s\in C,s<t}\mu_ss$,
and clearly $\polysupp\of{p_t}\subseteq\pset{s\in C}{s<t}$.
If $t\in\Mon\of{S}\setminus D$
then $t>s$ for all $s\in D$
since~$D$~is a distinguished order ideal.
In particular,
we see also in this case that
$\polysupp\of{p_t}\subseteq C=\pset{s\in C}{s<t}$,
finishing the proof of the claim.

\ref{lemma:distinguished-order-ideal:in(q_t)}
This is an immediate consequence of
part~\ref{lemma:distinguished-order-ideal:supp(p_t)}.

\ref{lemma:distinguished-order-ideal:supp(p)-C}
Suppose that $\polysupp\of{p}\subseteq C$.
Then $p\in\I_C\of{X}=\ker\of{\ev{C}{X}}=\set{0}$,
a contradiction.

\ref{lemma:distinguished-order-ideal:normal-set}
If $t\in\Mon\of{S}\setminus C$
then $t=\initial\of{q_t}\in\initial\of{I}$
by
part~\ref{lemma:distinguished-order-ideal:in(q_t)}.
Thus $\normalset\of{I}\subseteq C$
and since $\card{\normalset\of{I}}=\card{X}=\card{C}$,
we have $\normalset\of{I}=C$.
\end{proof}

\begin{corollary}
Let~$<$~be a monomial order on~$\Mon\of{S}$,
$X\subseteq K^n$ be finite,
$I\defeq\I\of{X}$,
and
$D\subseteq\Mon\of{S}$
be a distinguished order ideal~\wrt~$<$.
Then the following are equivalent:
\begin{ifflist}
\item%
\label{corollary:distinguished-order-ideal-surjective-normal-set:surjective}
$\ev{D}{X}$~is surjective;
\item%
\label{corollary:distinguished-order-ideal-surjective-normal-set:normal-set}
$\normalset\of{I}\subseteq D$.
\end{ifflist}
\end{corollary}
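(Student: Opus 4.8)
The plan is to prove the two implications separately, using the machinery already assembled in Lemma~\ref{lemma:moeller-basis-choice} and Lemma~\ref{lemma:distinguished-order-ideal}.

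\textbf{Direction \ref{corollary:distinguished-order-ideal-surjective-normal-set:surjective}$\Rightarrow$\ref{corollary:distinguished-order-ideal-surjective-normal-set:normal-set}.} Assume $\ev{D}{X}$ is surjective. Then $D$ is a distinguished order ideal with $\ev{D}{X}$ surjective, so Lemma~\ref{lemma:distinguished-order-ideal} applies directly: by part~\ref{lemma:distinguished-order-ideal:normal-set} we obtain a $C\subseteq D$ with $C=\normalset\of{I}$, hence $\normalset\of{I}\subseteq D$. This direction is essentially immediate from the previous lemma.

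\textbf{Direction \ref{corollary:distinguished-order-ideal-surjective-normal-set:normal-set}$\Rightarrow$\ref{corollary:distinguished-order-ideal-surjective-normal-set:surjective}.} Assume $\normalset\of{I}\subseteq D$. By Lemma~\ref{lemma:moeller-basis-choice}\,\ref{lemma:moeller-basis-choice:normal-set-bijection} the map ${\ev{\normalset\of{I}}{X}}\colon S_{\normalset\of{I}}\to K^X$ is bijective, in particular surjective. Since $S_{\normalset\of{I}}\subseteq S_D$ (because $\normalset\of{I}\subseteq D$), and $\ev{D}{X}$ restricted to $S_{\normalset\of{I}}$ agrees with $\ev{\normalset\of{I}}{X}$, surjectivity of the restriction forces surjectivity of $\ev{D}{X}$. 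Note this direction does not even require $D$ to be distinguished.

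\textbf{Main obstacle.} There is no serious obstacle: both implications are routine consequences of the two preceding lemmas, with the only minor subtlety being to observe in the first direction that the hypothesis of Lemma~\ref{lemma:distinguished-order-ideal} (namely $D$ distinguished \emph{and} $\ev{D}{X}$ surjective) is exactly what \ref{corollary:distinguished-order-ideal-surjective-normal-set:surjective} together with the standing assumption on $D$ provides, and in the second direction to note the containment $S_{\normalset\of{I}}\subseteq S_D$ of subspaces of $S$ so that surjectivity propagates upward.
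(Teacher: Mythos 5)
Your proof is correct and follows essentially the same route as the paper: the forward direction via Lemma~\ref{lemma:distinguished-order-ideal}\,\ref{lemma:distinguished-order-ideal:normal-set} applied to the $C$ from Lemma~\ref{lemma:moeller-basis-choice}\,\ref{lemma:moeller-basis-choice:choice}, and the converse via the bijectivity of $\ev{\normalset\of{I}}{X}$ from Lemma~\ref{lemma:moeller-basis-choice}\,\ref{lemma:moeller-basis-choice:normal-set-bijection} together with $S_{\normalset\of{I}}\subseteq S_D$. Your observation that the converse does not use distinguishedness of~$D$ is a correct (if unstated in the paper) bonus.
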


\begin{proof}
\claimmimpl{\ref{corollary:distinguished-order-ideal-surjective-normal-set:surjective}}{\ref{corollary:distinguished-order-ideal-surjective-normal-set:normal-set}}
Let $t\in\normalset\of{I}$
and let $C\subseteq D$ be as in
Lemma~\ref{lemma:moeller-basis-choice}\,\ref{lemma:moeller-basis-choice:choice}.
Then we have
$\normalset\of{I}=C$
by
Lemma~\ref{lemma:distinguished-order-ideal}\,\ref{lemma:distinguished-order-ideal:normal-set}
and thus $\normalset\of{I}\subseteq D$.

\claimmimpl{\ref{corollary:distinguished-order-ideal-surjective-normal-set:normal-set}}{\ref{corollary:distinguished-order-ideal-surjective-normal-set:surjective}}
By
Lemma~\ref{lemma:moeller-basis-choice}\,\ref{lemma:moeller-basis-choice:normal-set-bijection},
$\ev{\normalset\of{I}}{X}$~is bijective,
and since $\normalset\of{I}\subseteq D$,
$\ev{D}{X}$~is surjective.
\end{proof}

The special case of
the next theorem
for a degree compatible monomial order~$<$
and
$D=\FT_d$
can already be found in~%
\cite[Theorem~\textup{2.48}]{von17}.

\begin{theorem}%
[M\"oller]%
\label{theorem:moeller-ideal-basis}
Let~$<$~be a monomial order on~$\Mon\of{S}$,
$X\subseteq K^n$ finite,
and~$D$~a distinguished order ideal~\wrt~$<$
such that~$\ev{D}{X}$~is surjective.
Then there is a
Gr\"obner basis~$G$~of~$\I\of{X}$
such that
\begin{equation*}
\text{$G\subseteq S_{D\cup\border\of{D}}$
and
$\card{G}=\card{D}+\card{\border\of{D}}-\card{X}$.}
\end{equation*}
\end{theorem}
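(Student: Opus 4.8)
The plan is to construct $G$ explicitly from the data furnished by Lemma~\ref{lemma:moeller-basis-choice} and Lemma~\ref{lemma:distinguished-order-ideal}. Write $I\defeq\I\of{X}$ and fix a set $C\subseteq D$ as in Lemma~\ref{lemma:moeller-basis-choice}\,\ref{lemma:moeller-basis-choice:choice}; by Lemma~\ref{lemma:distinguished-order-ideal}\,\ref{lemma:distinguished-order-ideal:normal-set} we have $C=\normalset\of{I}$, hence $\card{C}=\card{X}$ by Lemma~\ref{lemma:moeller-basis-choice}\,\ref{lemma:moeller-basis-choice:normal-set-bijection}. For $t\in\Mon\of{S}$ let $q_t=t-p_t\in I$ be the polynomial provided by Lemma~\ref{lemma:distinguished-order-ideal}. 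I would then set
\begin{equation*}
G\defeq\pset{q_t}{t\in\rb{D\cup\border\of{D}}\setminus C}\text{.}
\end{equation*}
Since $D$ and $\border\of{D}$ are disjoint by definition of the border and $C\subseteq D$, the index set has exactly $\card{D}+\card{\border\of{D}}-\card{C}$ elements; moreover $\initial\of{q_t}=t$ for every $t\notin C$ by Lemma~\ref{lemma:distinguished-order-ideal}\,\ref{lemma:distinguished-order-ideal:in(q_t)}, so the $q_t$ are pairwise distinct, which yields $\card{G}=\card{D}+\card{\border\of{D}}-\card{X}$.

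The inclusion $G\subseteq S_{D\cup\border\of{D}}$ is immediate: each index $t$ lies in $D\cup\border\of{D}$, and $\polysupp\of{p_t}\subseteq C\subseteq D$ by Lemma~\ref{lemma:distinguished-order-ideal}\,\ref{lemma:distinguished-order-ideal:supp(p_t)}, hence $\polysupp\of{q_t}\subseteq\set{t}\cup C\subseteq D\cup\border\of{D}$. This is the only place where the distinguishedness of~$D$ really enters: it is what allows Lemma~\ref{lemma:distinguished-order-ideal} to bound $\polysupp\of{p_t}$ also for those $t\in\border\of{D}$ that do not lie in~$D$.

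The heart of the proof is to show that $G$ is a Gr\"obner basis of~$I$. Since $q_t\in I$ for all~$t$ by Lemma~\ref{lemma:distinguished-order-ideal}\,\ref{lemma:distinguished-order-ideal:q_t}, by the standard Gr\"obner basis criterion (e.g.,~Cox-Little-O'Shea~\cite{CLO15}) it is enough to check that the leading terms of the elements of~$G$, which are precisely the monomials in $\rb{D\cup\border\of{D}}\setminus C$, generate the monomial ideal $\initial\of{I}$. As $C=\normalset\of{I}$, the ideal $\initial\of{I}$ is generated by the monomials in $\Mon\of{S}\setminus C$; the routine remark that every monomial outside the order ideal~$C$ has a divisor in $\border\of{C}$ — choose a divisor minimal among those not lying in~$C$; all its proper divisors lie in~$C$, so it has the form $\X_i s$ with $s\in C$ — shows that $\border\of{C}$ already generates $\initial\of{I}$. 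It therefore remains to verify $\border\of{C}\subseteq\rb{D\cup\border\of{D}}\setminus C$: the intersection with~$C$ is empty by definition of the border, and if $m=\X_i s\in\border\of{C}$ with $s\in C\subseteq D$, then $m\in\X_i D$, so either $m\in D$ or $m\in\X_i D\setminus D\subseteq\border\of{D}$.

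The one point requiring care is the exact bookkeeping behind the cardinality formula: one must take the index set to be precisely $\rb{D\cup\border\of{D}}\setminus C$, and observe that the \qquot{extra} elements $q_t$ with $t\in\rb{D\cup\border\of{D}}\setminus\rb{C\cup\border\of{C}}$ do no harm, since they still lie in~$I$ with leading term $t\in\initial\of{I}$ and hence leave the leading-term ideal unchanged. Everything else follows from the two preceding lemmas together with elementary combinatorics of order ideals and their borders.
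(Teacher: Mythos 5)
Your proof is correct and follows essentially the same route as the paper's: the same set $G=\pset{q_t}{t\in\rb{D\cup\border\of{D}}\setminus C}$, the same appeals to Lemmas~\ref{lemma:moeller-basis-choice} and~\ref{lemma:distinguished-order-ideal} for the support bound and the cardinality count, and the same underlying observation that every minimal generator of $\initial\of{I}$ has all of its proper divisors in $C=\normalset\of{I}$ and hence lies in $D\cup\border\of{D}$ --- you merely phrase this positively via $\border\of{C}$ where the paper argues by contradiction with a case split. The only point to patch is the degenerate case $X=\emptyset$ (equivalently $C=\emptyset$): there the minimal divisor not lying in~$C$ is~$1$, which cannot be written as $\X_i s$ with $s\in C$, so your justification for \qquot{every monomial outside $C$ has a divisor in $\border\of{C}$} and for $\border\of{C}\subseteq\rb{D\cup\border\of{D}}\setminus C$ does not apply as written; the paper handles this case separately, and it is immediate anyway since $\border\of{\emptyset}=\set{1}$, $1\in D\cup\border\of{D}$, and $\I\of{\emptyset}=\igen{1}$.
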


\begin{proof}
Let
$I\defeq\I\of{X}$
and let
$C=\normalset\of{I}\subseteq D$,
$p_t\in S_C$,
and
$q_t=t-p_t$
be as in
Lemma~\ref{lemma:distinguished-order-ideal}.

Define
\begin{equation*}
G
\defeq
\pset{q_s}{s\in D\cup\border\of{D}\setminus C}
\subseteq
I\text{.}
\end{equation*}
We show that~$G$~is a Gr\"obner basis of~$I$.
Set $J\defeq\igen{\initial\of{G}}_S$.
It suffices to show that $J=\initial\of{I}$.
It is clear that
$J\subseteq\initial\of{I}$.
The reverse inclusion is certainly true if $X=\emptyset$,
since then
\begin{equation*}
\text{$I=\igen{1}=\initial\of{I}$,
$C=\emptyset$,
$1\in D\cup\border\of{D}$,
and
$1=\initial\of{q_1}\in\initial\of{G}$.}
\end{equation*}
Thus let~\wlogen~$X\ne\emptyset$.
Assume that
$\initial\of{I}\nsubseteq J$.
Then there is a monomial
$s\in\initial\of{I}\setminus J$.
Let~$t$~be a minimal monomial generator of~$\initial\of{I}$
with $t\divides s$.
Since $t\in\initial\of{I}$
we have $t\notin\normalset\of{I}=C$.

\emph{Case~1:}
$t\in D$.
Then $q_t\in G$
and $t=\initial\of{q_t}\in\initial\of{G}$,
hence $s\in\igen{\initial\of{G}}=J$,
a contradiction.

\emph{Case~2:}
$t\notin D$.
Since $X\ne\emptyset$
we have $t\ne1$,
so there is a $j\in\set{1,\dots,n}$
such that $\X_j\divides t$.
Let $\widetilde{t}\defeq t/\X_j$.
Since~$t$~is a minimal generator of~$\initial\of{I}$,
we have
$\widetilde{t}\notin\initial\of{I}$,
so
$\widetilde{t}\in C\subseteq D$.
Hence,
$t
=
\X_j\widetilde{t}
\in
\rb{\X_jD}\setminus D
\subseteq
\border\of{D}
\subseteq
\initial\of{G}$.
Thus we obtain that
$s\in\igen{\initial\of{G}}=J$,
again a contradiction.

Thus we have $\initial\of{I}\subseteq J$
and~$G$~is a Gr\"obner basis of~$I$.
By
Lemma~\ref{lemma:distinguished-order-ideal}
it is clear that
$\card{G}
=
\card{D\cup\border\of{D}\setminus C}
=
\card{D}+\card{\border\of{D}}-\card{X}$.
Moreover,
for
$t\in D\cup\border\of{D}\setminus C$
we have
$\polysupp\of{q_t}
=
\set{t}\cup\polysupp\of{p_t}
\subseteq
\set{t}\cup\pset{s\in C}{s<t}
\subseteq
D\cup\border\of{D}$,
\idest,~$q_t\in S_{D\cup\border\of{D}}$,
which concludes the proof.
\end{proof}

Note that in
Theorem~\ref{theorem:moeller-ideal-basis},
in general~$G$~contains a border prebasis induced by~$\border\of{D}$.
In particular,
if the distinguished order ideal~$D$~equals~$\normalset\of{I}$,
then~$G$~is a border basis of~$I$.
See,~\eg,~%
Kreuzer-Robbiano~%
\cite[Section~\textup{6.4}]{KR05}
for further details related to the theory of border bases.

We list two immediate consequences of
Theorem~\ref{theorem:moeller-ideal-basis}
in the following corollary.

\begin{corollary}
The following holds:
\begin{tlist}
\item
With the notation and assumptions as in
Theorem~\ref{theorem:moeller-ideal-basis},
$\I_{D\cup\border\of{D}}\of{X}$
generates a radical ideal in~$S$.
\item
If~$\ev{\FT_d}{X}$~is surjective
then
$\I_{\FT_{d+1}}\of{X}$
generates a radical ideal in~$S$.
\end{tlist}
\end{corollary}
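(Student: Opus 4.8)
The plan is to obtain both statements directly from M\"oller's theorem \trb{Theorem~\ref{theorem:moeller-ideal-basis}}; the only additional input is the elementary fact that the vanishing ideal $\I\of{X}$ of a finite set $X\subseteq K^n$ is radical. This last point holds over any field, not just an algebraically closed one: the evaluation homomorphism $\ev{}{X}$ realizes $S/\I\of{X}$ as a subring of $K^X$, which is reduced as a finite product of fields, so no appeal to Hilbert's Nullstellensatz is needed here.

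For part~(a) I would argue as follows. By Theorem~\ref{theorem:moeller-ideal-basis} there is a Gr\"obner basis $G$ of $\I\of{X}$ with $G\subseteq S_{D\cup\border\of{D}}$. Since $G\subseteq\I\of{X}$ we then have
\begin{equation*}
G\subseteq\I\of{X}\cap S_{D\cup\border\of{D}}=\I_{D\cup\border\of{D}}\of{X}
\end{equation*}
by the identity $\I_D\of{X}=\I\of{X}\cap S_D$ recorded after Definition~\ref{definition:evaluation-map}. A Gr\"obner basis generates its ideal, so
\begin{equation*}
\I\of{X}=\igen{G}\subseteq\igen{\I_{D\cup\border\of{D}}\of{X}}\subseteq\I\of{X}\text{,}
\end{equation*}
forcing $\igen{\I_{D\cup\border\of{D}}\of{X}}=\I\of{X}$, and this ideal is radical by the remark above.

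For part~(b) I would simply specialize part~(a) to $D=\FT_d$. This $D$ is a distinguished order ideal with respect to $<_\degrevlex$ \trb{as in the example preceding Lemma~\ref{lemma:distinguished-order-ideal}}, and $\ev{\FT_d}{X}$ is surjective by hypothesis, so the hypotheses of Theorem~\ref{theorem:moeller-ideal-basis} are met. It then only remains to identify the index set: a monomial of degree $d+1$ equals $\X_j$ times a monomial of degree $d$ for any variable $\X_j$ dividing it, while every monomial of degree at most $d$ lies in $\FT_d$, so $\border\of{\FT_d}=\pset{\alpha\in\nat^n}{\abs{\alpha}=d+1}$ and hence $\FT_d\cup\border\of{\FT_d}=\FT_{d+1}$. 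Therefore $\I_{\FT_{d+1}}\of{X}=\I_{\FT_d\cup\border\of{\FT_d}}\of{X}$ generates a radical ideal by part~(a).

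I do not expect a real obstacle: the argument is bookkeeping on top of Theorem~\ref{theorem:moeller-ideal-basis}. The only two points deserving an explicit line are the radicality of $\I\of{X}$ over a possibly non-algebraically closed field \trb{handled by the embedding $S/\I\of{X}\emb K^X$} and the combinatorial identity $\FT_d\cup\border\of{\FT_d}=\FT_{d+1}$.
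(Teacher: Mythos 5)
Your proof is correct and is exactly the argument the paper has in mind (the corollary is stated there without proof as an ``immediate consequence'' of Theorem~\ref{theorem:moeller-ideal-basis}): the Gr\"obner basis $G\subseteq S_{D\cup\border\of{D}}$ lies in $\I_{D\cup\border\of{D}}\of{X}$ and generates $\I\of{X}$, which is radical since $S/\I\of{X}$ embeds in $K^X$, and part~(b) follows from $\FT_d\cup\border\of{\FT_d}=\FT_{d+1}$. Both of the details you flag explicitly --- radicality over an arbitrary field and the identification of the border of $\FT_d$ --- are precisely the right points to spell out.
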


We have the following implications for Prony structures.

\begin{corollary}%
\label{corollary:moeller-prony-structures}
Given the setup of
Definition~\ref{definition:prony-structure},
let~$P\of{f}$~be a Prony structure for~$f\in V$.
Let $d\in\nat$
be such that
$\ker P_d\of{f}=\I_{\FJ_d}\of{\suppvar_u\of{f}}$.
If there is a distinguished order ideal~$D$
\trb{\wrt~some monomial order~$<$~on~$\Mon\of{S}$}
such that~$\ev{D}{\suppvar_u\of{f}}$
is surjective
and $D\cup\border\of{D}\subseteq\FJ_d$
then
\begin{equation*}
\igen{\ker P_d\of{f}}=\I\of{X}\text{.}
\end{equation*}
In particular,
$\igen{\ker P_d\of{f}}$~is a radical ideal in~$S$.
\end{corollary}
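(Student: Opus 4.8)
The plan is to reduce the statement to M\"oller's theorem. Throughout write $X\defeq\suppvar_u\of{f}$, so that the hypothesis reads $\ker P_d\of{f}=\I_{\FJ_d}\of{X}$, and recall from the discussion after Definition~\ref{definition:evaluation-map} that $\I_{\FJ_d}\of{X}=\I\of{X}\cap S_{\FJ_d}$. One inclusion is then immediate: $\ker P_d\of{f}=\I\of{X}\cap S_{\FJ_d}\subseteq\I\of{X}$, so $\igen{\ker P_d\of{f}}\subseteq\I\of{X}$.

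For the reverse inclusion I would invoke Theorem~\ref{theorem:moeller-ideal-basis} with the given distinguished order ideal~$D$ (which is finite, being contained in~$\FJ_d$) and the finite set~$X$; its hypothesis is satisfied since $\ev{D}{X}$ is assumed surjective. This produces a Gr\"obner basis~$G$ of~$\I\of{X}$ with $G\subseteq S_{D\cup\border\of{D}}$. Because $D\cup\border\of{D}\subseteq\FJ_d$ we have $S_{D\cup\border\of{D}}\subseteq S_{\FJ_d}$, and since also $G\subseteq\I\of{X}$ it follows that $G\subseteq\I\of{X}\cap S_{\FJ_d}=\I_{\FJ_d}\of{X}=\ker P_d\of{f}$. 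Hence $\I\of{X}=\igen{G}\subseteq\igen{\ker P_d\of{f}}$, and together with the first inclusion this gives $\igen{\ker P_d\of{f}}=\I\of{X}$.

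The final assertion is routine: the vanishing ideal $\I\of{X}$ of a point set is always radical, so $\igen{\ker P_d\of{f}}$ is radical. I do not anticipate a genuine obstacle here --- the corollary is essentially a translation of Theorem~\ref{theorem:moeller-ideal-basis} into the language of Prony structures. The only points requiring a little care are the bookkeeping identification of monomials with exponent vectors (so that $D\cup\border\of{D}\subseteq\FJ_d\subseteq\nat^n$ is read correctly) and verifying that all hypotheses of Theorem~\ref{theorem:moeller-ideal-basis} --- in particular that~$D$ is an order ideal, distinguished, and that $\ev{D}{X}$ is surjective --- are indeed in force, which they are by assumption.
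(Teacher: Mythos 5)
Your proof is correct and is exactly the deduction the paper intends: the corollary is stated there without proof as an immediate consequence of Theorem~\ref{theorem:moeller-ideal-basis}, and your two inclusions (the trivial one from $\ker P_d\of{f}=\I\of{X}\cap S_{\FJ_d}$, and the reverse one from placing M\"oller's Gr\"obner basis $G\subseteq S_{D\cup\border\of{D}}\subseteq S_{\FJ_d}$ inside $\ker P_d\of{f}$) supply precisely the missing details, including the reading of $X$ as $\suppvar_u\of{f}$.
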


\section{Prony structures for multivariate exponential sums}%
\label{section:exponential-sums}

In this section we discuss Prony structures
for multivariate exponential sums
based on Hankel-like and Toeplitz-like matrices.
Because for the Toeplitz case
we need evaluations also at negative arguments,
we have to consider two different variants of exponentials.
One has only non-negative arguments
and no restrictions on the bases in~$K^n$.
The other one is defined also for negative \trb{integer} arguments
and the restriction that the bases lie on the algebraic torus~$\rb{K\setminus\set{0}}^n$.
Observe that it is not possible to define
Toeplitz versions of Prony's method for the first variant.

That Prony's methods can be generalized to these settings was shown
in
Kunis-Peter-R\"omer-von~der~Ohe~%
\cite{KPRv16},
Sauer~%
\cite{Sau17},
and
Mourrain~%
\cite{Mou18}.
Here we provide a new perspective on these results.
Prony structures are a common abstraction
of both Hankel and Toeplitz variants of Prony's method.

The following notation generalizes
the univariate case in
Example~\ref{example:classic-prony}.
Here and in the following we write
$\unit_1,\dots,\unit_n$
for the standard basis vectors of~$K^n$.

\begin{definition}%
\label{definition:exponential-sums-with-domain-NN^n}
Let~$K$~be a field
and~$F$~a subfield of~$K$.
\begin{tlist}
\item
For
$b\in K^n$,
let
\begin{equation*}
\exp_b\colon\nat^n\to K\text{,}\quad\alpha\mapsto b^\alpha=\prod_{j=1}^nb_j^{\alpha_j}\text{,}
\end{equation*}
denote the
\emph{\trb{$n$-variate} exponential
with
base~$b$
\trb{with domain~$\nat^n$}}.
For a subset $Y\subseteq K^n$ let
$B_Y\defeq\pset{\exp_b}{b\in Y}$.
We denote the $F$-subvector space of~$K^{\nat^n}$ generated by~$B_Y$
with
\begin{equation*}
\Exp^n_Y\of{F}
\defeq
\mgen{B_Y}_F\text{.}
\end{equation*}
We call the elements of~$\Exp^n_Y\of{F}$
\emph{\trb{$n$-variate}
exponential sums
\trb{with domain~$\nat^n$}}.
Furthermore,
we denote by~$u_Y$
the function
\begin{equation*}
u_Y\colon B_Y\to K^n\text{,}\quad\exp_b\mapsto\rb{\exp_b\of{\unit_1},\dots,\exp_b\of{\unit_n}}=b\text{.}
\end{equation*}
Trivially,
$u_Y$~is injective.
\item
Let~$\FI,\FJ$~be sequences of finite subsets of~$\nat^n$.
For $f\in\Exp^n_Y\of{F}$
and $d\in\nat$
let
\begin{equation*}
\Hankel_d\of{f}
\defeq
\Hankel_{\FI,\FJ,d}\of{f}
\defeq
\rb{f\of{\alpha+\beta}}_{\substack{\alpha\in\FI_d\\\beta\in\FJ_d}}
\in
K^{\FI_d\times\FJ_d}\text{.}
\end{equation*}
\end{tlist}
\end{definition}

We will see in
Theorem~\ref{theorem:exponential-sums}
that~$\Hankel_d\of{f}$~induces a Prony structure on the space of exponential sums~$\Exp^n_Y\of{F}$,
and that therefore the set~$B_Y$~is a basis of~$\Exp^n_Y\of{F}$.

The following is a variation of
Definition~\ref{definition:exponential-sums-with-domain-NN^n}
where all bases~$b$~are restricted to lie on the algebraic torus~$\rb{K\setminus\set{0}}^n$.
This allows also for non-negative arguments,~\idest~%
the exponentials are functions on the domain~$\integ^n$.
As a consequence it is possible to define not only sequences of Hankel-like
but also of Toeplitz-like matrices
associated to an exponential sum \trb{with domain~$\integ^n$}.
In order to avoid any possible confusion,
we write out the definition in full.

\begin{definition}
Let~$K$~be a field
and~$F$~a subfield of~$K$.
\begin{tlist}
\item
For
$b\in\rb{K\setminus\set{0}}^n$,
let
\begin{equation*}
\exp_{\integ,b}\colon\integ^n\to K\text{,}\quad\alpha\mapsto b^\alpha=\prod_{j=1}^nb_j^{\alpha_j}\text{,}
\end{equation*}
denote the
\emph{\trb{$n$-variate} exponential
with
base~$b$
\trb{with domain~$\integ^n$}}.
For a subset $Y\subseteq\rb{K\setminus\set{0}}^n$ let
$B_{\integ,Y}\defeq\pset{\exp_{\integ,b}}{b\in Y}$.
We denote the $F$-subvector space of~$K^{\integ^n}$ generated by~$B_{\integ,Y}$
with
\begin{equation*}
\Exp^n_{\integ,Y}\of{F}
\defeq
\mgen{B_{\integ,Y}}_F\text{.}
\end{equation*}
We call the elements of~$\Exp^n_{\integ,Y}\of{F}$
\emph{\trb{$n$-variate}
exponential sums
\trb{with domain~$\integ^n$}}.
Furthermore,
we denote by~$u_{\integ,Y}$
the function
\begin{equation*}
u_{\integ,Y}\colon B_{\integ,Y}\to K^n\text{,}\quad\exp_{\integ,b}\mapsto\rb{\exp_{\integ,b}\of{\unit_1},\dots,\exp_{\integ,b}\of{\unit_n}}=b\text{.}
\end{equation*}
Trivially,
$u_{\integ,Y}$~is injective.
\item
Let~$\FI,\FJ$~be sequences of finite subsets of~$\nat^n$.
For $f\in\Exp^n_{\integ,Y}\of{F}$
and $d\in\nat$
let
\begin{equation*}
\Toeplitz_d\of{f}
\defeq
\Toeplitz_{\FI,\FJ,d}\of{f}
\defeq
\rb{f\of{\beta-\alpha}}_{\substack{\alpha\in\FI_d\\\beta\in\FJ_d}}
\in
K^{\FI_d\times\FJ_d}\text{.}
\end{equation*}
Since for $f\in\Exp^n_{\integ,Y}\of{F}$
we clearly have $\restr{f}{\nat^n}\in\Exp^n_Y\of{F}$,
we also set
\begin{equation*}
\Hankel_d\of{f}
\defeq
\Hankel_{\FI,\FJ,d}\of{f}
\defeq
\Hankel_{\FI,\FJ,d}\of{\restr{f}{\nat^n}}\text{.}
\end{equation*}
\end{tlist}
\end{definition}

\begin{lemma}%
\label{lemma:hankel-toeplitz-decomposition}
Let~$\FI,\FJ$~be sequences of finite subsets of~$\nat^n$.
Then the following holds:
\begin{tlist}
\item%
\label{lemma:hankel-toeplitz-decomposition:hankel}
Let $Y\subseteq K^n$ and $u=u_Y$.
For $f\in\Exp^n_Y\of{F}$,
$f=\sum_{b\in M}f_bb$
with $M\subseteq B_Y$ finite
and $f_b\in F$,
we have
\begin{equation*}
\Hankel_d\of{f}
=
\transpose{\sumrb{\Vandermonde_{\FI_d}^{u\of{M}}}}\mul C_f\mul\Vandermonde_{\FJ_d}^{u\of{M}}\text{.}
\end{equation*}
Here~$\Vandermonde_{\FI_d}^{u\of{M}}\in K^{u\of{M}\times\FI_d}$
denotes the matrix
of $\ev{\FI_d}{u\of{M}}$ \wrt~the monomial basis of~$S_{\FI_d}$
and the canonical basis of~$K^{u\of{M}}$.
The matrix
$C_f
\in
F^{u\of{M}\times u\of{M}}$
is the diagonal matrix
with the non-zero
coefficients~$f_b$ of~$f$
on the \qquot{diagonal}.
\item%
\label{lemma:hankel-toeplitz-decomposition:toeplitz}
Let $Y\subseteq\rb{K\setminus\set{0}}^n$ and $u=u_{\integ,Y}$.
For $f\in\Exp^n_{\integ,Y}\of{F}$,
$f=\sum_{b\in M}f_bb$
with $M\subseteq B_{\integ,Y}$ finite
and $f_b\in F$,
we have
\begin{equation*}
\Toeplitz_d\of{f}
=
\transpose{\sumrb{\Vandermonde_{\FI_d}^{1/u\of{M}}}}\mul C_f\mul\Vandermonde_{\FJ_d}^{u\of{M}}\text{.}
\end{equation*}
Here~$\Vandermonde_{\FI_d}^{1/u\of{M}}\in K^{1/u\of{M}\times\FI_d}$
denotes the matrix
of
$\ev{\FI_d}{\pset{1/b}{b\in u\of{M}}}$
and~$C_f$~and~$\Vandermonde_{\FJ_d}^{u\of{M}}$ are as in
part~\ref{lemma:hankel-toeplitz-decomposition:hankel}.
\end{tlist}
\end{lemma}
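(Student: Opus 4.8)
The plan is to prove both factorizations by a direct entrywise comparison: expand $f$ in the basis, read off the $\rb{\alpha,\beta}$-entry of each matrix product, and recognize it as $f\of{\alpha+\beta}$ (resp.\ $f\of{\beta-\alpha}$) using only that exponentials are exponential. Throughout, since $u=u_Y$ (resp.\ $u=u_{\integ,Y}$) is injective, the assignment $b\mapsto u\of{b}$ is a bijection $M\to u\of{M}$, so we may index the rows and columns of the Vandermonde factors and the diagonal of $C_f$ by $u\of{M}$ and set $f_{u\of{b}}\defeq f_b$.

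For part~\ref{lemma:hankel-toeplitz-decomposition:hankel}, fix $\alpha\in\FI_d$ and $\beta\in\FJ_d$. By the definition of the Vandermonde matrices the $\rb{\alpha,x}$-entry of $\transpose{\sumrb{\Vandermonde_{\FI_d}^{u\of{M}}}}$ is $x^\alpha$, the $\rb{x,\beta}$-entry of $\Vandermonde_{\FJ_d}^{u\of{M}}$ is $x^\beta$, and $C_f$ is diagonal with $x$-entry $f_x$. Hence the $\rb{\alpha,\beta}$-entry of the product $\transpose{\sumrb{\Vandermonde_{\FI_d}^{u\of{M}}}}\mul C_f\mul\Vandermonde_{\FJ_d}^{u\of{M}}$ equals $\sum_{x\in u\of{M}}x^\alpha f_x x^\beta=\sum_{b\in M}f_b\,u\of{b}^{\alpha+\beta}$; invoking $u\of{b}^{\alpha+\beta}=u\of{b}^\alpha u\of{b}^\beta$, i.e.\ $\exp_b\of{\alpha+\beta}=\exp_b\of{\alpha}\exp_b\of{\beta}$, this equals $\sum_{b\in M}f_b\exp_b\of{\alpha+\beta}=f\of{\alpha+\beta}$, which is the $\rb{\alpha,\beta}$-entry of $\Hankel_d\of{f}$.

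Part~\ref{lemma:hankel-toeplitz-decomposition:toeplitz} runs in parallel. Here $u\of{b}\in\rb{K\setminus\set{0}}^n$, so $u\of{b}^{\beta-\alpha}=\rb{1/u\of{b}}^\alpha u\of{b}^\beta$ makes sense, and the left factor is the Vandermonde matrix over $\pset{1/b}{b\in u\of{M}}$, whose transpose has $\rb{\alpha,z}$-entry $z^\alpha$. Reading off entries exactly as above gives $\sum_{b\in u\of{M}}\rb{1/b}^\alpha f_b\,b^\beta=\sum_{b\in M}f_b\,u\of{b}^{\beta-\alpha}=f\of{\beta-\alpha}$, the $\rb{\alpha,\beta}$-entry of $\Toeplitz_d\of{f}$. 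The auxiliary assertion about $\Hankel_d\of{f}$ in this setting then follows from part~\ref{lemma:hankel-toeplitz-decomposition:hankel}, once one observes that $\restr{f}{\nat^n}\in\Exp^n_Y\of{F}$ because $\restr{\exp_{\integ,b}}{\nat^n}=\exp_b$.

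I do not anticipate a real obstacle; the content is bookkeeping around the multiplicativity of exponentials. The one point I would state carefully is the index-matching convention in the Toeplitz case: $C_f$ is to be read as the diagonal map identifying the index $1/b$ labelling the columns of $\transpose{\sumrb{\Vandermonde_{\FI_d}^{1/u\of{M}}}}$ with the index $b$ labelling the rows of $C_f$, via $z\mapsto 1/z$; equivalently one first identifies $K^{1/u\of{M}}\isom K^{u\of{M}}\isom F^M$. Once this is fixed the computation is identical to that of part~\ref{lemma:hankel-toeplitz-decomposition:hankel}. It is also worth noting that the representation $f=\sum_{b\in M}f_bb$ is not assumed reduced, some $f_b$ may vanish, but the stated identities hold verbatim for every such finite $M$, which is all that is needed in the sequel.
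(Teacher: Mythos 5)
Your proof is correct and is exactly the ``straightforward computation'' that the paper itself does not spell out but delegates to the reference \cite{von17}: an entrywise comparison using the multiplicativity $u\of{b}^{\alpha+\beta}=u\of{b}^\alpha u\of{b}^\beta$ \trb{resp.~$u\of{b}^{\beta-\alpha}=\rb{1/u\of{b}}^\alpha u\of{b}^\beta$}. Your explicit remarks on the index identification $K^{1/u\of{M}}\isom K^{u\of{M}}$ in the Toeplitz case and on the representation not being assumed reduced are both accurate and worth stating.
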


\begin{proof}
This follows by
straightforward computations;
see,~\eg,~%
\cite[Lemma~\textup{2.7}\,\textup{(a)}]{von17}
for
part~\ref{lemma:hankel-toeplitz-decomposition:hankel}
and~%
\cite[Lemma~\textup{2.32}\,\textup{(a)}]{von17}
for
part~\ref{lemma:hankel-toeplitz-decomposition:toeplitz},
respectively.
\end{proof}

The following theorem is a multivariate variant of Prony's method
\trb{\confer~%
Example~\ref{example:classic-prony}}.

\begin{theorem}%
[Prony structures for exponential sums]%
\label{theorem:exponential-sums}
Let~$K$~be a field.
Let~$\FJ$~be a sequence of finite subsets of~$\nat^n$
such that
$\FJ_d\subseteq\FJ_{d+1}$ for all large~$d$ and $\bigcup_{d\in\nat}\FJ_d=\nat^n$.
Let the sequence~$\FI$~be defined by
$\FI_d\defeq\FJ_{\ell\of{d}}$ for an unbounded monotonous sequence $\ell\colon\nat\to\nat$.
Then the following hold,
with $Y\subseteq K^n$ in~\ref{theorem:exponential-sums:hankel-NN^n}
and
$Y\subseteq\rb{K\setminus\set{0}}^n$
in~\ref{theorem:exponential-sums:toeplitz-ZZ^n}
and~\ref{theorem:exponential-sums:hankel-ZZ^n}:
\begin{tlist}
\item%
\label{theorem:exponential-sums:hankel-NN^n}
The map $f\mapsto\rb{\Hankel_d\of{f}}_{d\in\nat}$
induces a Prony structure on~$\Exp^n_Y\of{F}$.
\item%
\label{theorem:exponential-sums:toeplitz-ZZ^n}
The map $f\mapsto\rb{\Toeplitz_d\of{f}}_{d\in\nat}$
induces a Prony structure on~$\Exp^n_{\integ,Y}\of{F}$.
\item%
\label{theorem:exponential-sums:hankel-ZZ^n}
The map $f\mapsto\rb{\Hankel_d\of{f}}_{d\in\nat}$
induces a Prony structure on~$\Exp^n_{\integ,Y}\of{F}$.
\end{tlist}
\end{theorem}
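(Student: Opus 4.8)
The plan is to reduce all three parts to the characterization of Prony structures in Theorem~\ref{theorem:prony-structure-characterization}, used in the generating-set form of Remark~\ref{remark:prony-structure-for-generating-systems} so that the statements \qquot{$B_Y$ (resp.\ $B_{\integ,Y}$) is a basis} fall out rather than being assumed. For part~\ref{theorem:exponential-sums:hankel-NN^n}, fix $f\in\Exp^n_Y\of{F}$ and any representation $f=\sum_{b\in M}f_bb$ with $M\subseteq B_Y$ finite and $f_b\in F\setminus\set{0}$; write $u\defeq u_Y$, $X\defeq u\of{M}\subseteq K^n$, and $r\defeq\card{X}=\card{M}$. The essential input is the Vandermonde factorization of Lemma~\ref{lemma:hankel-toeplitz-decomposition}\,\ref{lemma:hankel-toeplitz-decomposition:hankel}, namely $\Hankel_d\of{f}=\transpose{\sumrb{\Vandermonde_{\FI_d}^{X}}}\mul C_f\mul\Vandermonde_{\FJ_d}^{X}$. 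Under the identification $K^{\FJ_d}\isom S_{\FJ_d}$ the rightmost factor is precisely the matrix of $\ev{\FJ_d}{X}$, so with $\eta_d\defeq\transpose{\sumrb{\Vandermonde_{\FI_d}^{X}}}\mul C_f\colon K^{X}\to K^{\FI_d}$ the commutative diagram of condition~\ref{theorem:prony-structure-characterization:monomorphism} in Theorem~\ref{theorem:prony-structure-characterization} (with $X$ in place of $\suppvar_u\of{f}$) holds automatically.

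It then remains only to check that $\eta_d$ is injective for all large~$d$. Since $C_f$ is an invertible diagonal matrix (its entries $f_b$ are non-zero), injectivity of $\eta_d$ is equivalent to injectivity of $\transpose{\sumrb{\Vandermonde_{\FI_d}^{X}}}$, hence to surjectivity of $\ev{\FI_d}{X}\colon S_{\FI_d}\to K^{X}$. By Lemma~\ref{lemma:polynomial-interpolation} the map $\ev{\le r-1}{X}=\ev{\FT_{r-1}}{X}$ is already surjective, so it suffices to note that $\FT_{r-1}\subseteq\FI_d$ for all large~$d$: indeed $\FI_d=\FJ_{\ell\of{d}}$ with $\ell$ unbounded and monotonous, while $\FJ_d\subseteq\FJ_{d+1}$ for all large~$d$ with $\bigcup_d\FJ_d=\nat^n$, so the finite set $\FT_{r-1}$ lies in $\FJ_m$ for all large~$m$, hence in $\FI_d$ for all large~$d$; for such~$d$ the restriction of $\ev{\FI_d}{X}$ to $S_{\FT_{r-1}}\subseteq S_{\FI_d}$ is surjective, so $\ev{\FI_d}{X}$ is too. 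With this, Remark~\ref{remark:prony-structure-for-generating-systems} applied to Theorem~\ref{theorem:prony-structure-characterization} shows at once that $B_Y$ is an $F$-basis of $\Exp^n_Y\of{F}$ and that $f\mapsto\rb{\Hankel_d\of{f}}_{d\in\nat}$ induces a Prony structure on it. The only step with any real content is this surjectivity/injectivity bookkeeping — above all the observation that $\FI_d$ eventually contains every prescribed degree range; once Lemma~\ref{lemma:hankel-toeplitz-decomposition} is in hand everything else is formal, and I do not expect a genuine obstacle.

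For part~\ref{theorem:exponential-sums:toeplitz-ZZ^n} I would run exactly the same argument with $u\defeq u_{\integ,Y}$, using Lemma~\ref{lemma:hankel-toeplitz-decomposition}\,\ref{lemma:hankel-toeplitz-decomposition:toeplitz}, namely $\Toeplitz_d\of{f}=\transpose{\sumrb{\Vandermonde_{\FI_d}^{1/u\of{M}}}}\mul C_f\mul\Vandermonde_{\FJ_d}^{u\of{M}}$; since $Y\subseteq\rb{K\setminus\set{0}}^n$, the set $\pset{1/b}{b\in u\of{M}}$ still consists of $r$ distinct points, so the evaluation map of $S_{\FI_d}$ at $\pset{1/b}{b\in u\of{M}}$ is surjective for all large~$d$, and $\eta_d\defeq\transpose{\sumrb{\Vandermonde_{\FI_d}^{1/u\of{M}}}}\mul C_f$ is the required injective factor through $\ev{\FJ_d}{u\of{M}}$. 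For part~\ref{theorem:exponential-sums:hankel-ZZ^n} I would avoid any new computation: for $f=\sum_{b\in M}f_b\exp_{\integ,b}\in\Exp^n_{\integ,Y}\of{F}$ the restriction $\restr{f}{\nat^n}=\sum_{b\in M}f_b\exp_b$ lies in $\Exp^n_Y\of{F}$ (as $Y\subseteq\rb{K\setminus\set{0}}^n\subseteq K^n$), the maps $u_{\integ,Y}$ and $u_Y$ agree under $\exp_{\integ,b}\leftrightarrow\exp_b$, and $\Hankel_d\of{f}=\Hankel_d\of{\restr{f}{\nat^n}}$ by definition; part~\ref{theorem:exponential-sums:hankel-NN^n} gives that $B_Y$ is a basis, so $\restr{\of{\cdot}}{\nat^n}$ is injective, $B_{\integ,Y}$ is an $F$-basis of $\Exp^n_{\integ,Y}\of{F}$, $\suppvar_{u_{\integ,Y}}\of{f}=\suppvar_{u_Y}\of{\restr{f}{\nat^n}}$, and the conditions in~\eqref{equation:definition:prony-structure} transfer directly from $\restr{f}{\nat^n}$ to~$f$.
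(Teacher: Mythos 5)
Your proposal is correct and follows essentially the same route as the paper: factor $\Hankel_d\of{f}$ (resp.\ $\Toeplitz_d\of{f}$) via Lemma~\ref{lemma:hankel-toeplitz-decomposition}, take $\eta_d\defeq\transpose{\rb{\ev{\FI_d}{M}}}\comp C_f$, get injectivity from surjectivity of the relevant evaluation map via Lemma~\ref{lemma:polynomial-interpolation}, and conclude with Theorem~\ref{theorem:prony-structure-characterization} and Remark~\ref{remark:prony-structure-for-generating-systems}; part~\ref{theorem:exponential-sums:hankel-ZZ^n} is reduced to part~\ref{theorem:exponential-sums:hankel-NN^n} exactly as in the paper. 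The only cosmetic difference is in part~\ref{theorem:exponential-sums:toeplitz-ZZ^n}, where you apply interpolation directly at the (still distinct) inverted nodes $\pset{1/b}{b\in u\of{M}}$ instead of citing the rank identity $\linrank\sumoftext{\Vandermonde_{\FI_d}^{1/M}}=\linrank\sumoftext{\Vandermonde_{\FI_d}^{M}}$ — both are one-line justifications of the same injectivity.
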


\begin{proof}
In every case we write $u=u_Y$ and $u=u_{\integ,Y}$,
respectively.

\ref{theorem:exponential-sums:hankel-NN^n}
Let
$f\in\Exp^n_Y\of{F}$,
$M\subseteq B_Y$ finite,
and $\rb{f_b}_{b\in M}\in\rb{F\setminus\set{0}}^M$
such that
$f=\sum_{b\in M}f_bb$.
We will verify that
condition~\ref{theorem:prony-structure-characterization:monomorphism}
of
Theorem~\ref{theorem:prony-structure-characterization}
holds
for~$f$~and~$M$~as described in
Remark~\ref{remark:prony-structure-for-generating-systems}.
In particular,
it then follows
that~$B_Y$~is an $F$-basis of~$\Exp^n_Y\of{F}$.

By the assumptions on~$\FJ$~and~$\FI$
and
Lemma~\ref{lemma:polynomial-interpolation},
$\ev{\FI_d}{M}$
is surjective for all large~$d$
and thus
$\transpose{\rb{\ev{\FI_d}{M}}}$
is injective.

Hence,
by
Lemma~\ref{lemma:hankel-toeplitz-decomposition}\,\ref{lemma:hankel-toeplitz-decomposition:hankel},
for all large~$d$
we have the following commutative diagram.
\begin{equation*}
\begin{tikzcd}
|[alias=KJd]|  K^{\FJ_d}  &&                      &&                      &&  |[alias=KId]|  K^{\FI_d}  \\
|[alias=SJd]|  S_{\FJ_d}  &&  |[alias=KM1]|  K^M  &&  |[alias=KM2]|  K^M  &&  |[alias=SId]|  S_{\FI_d}
\ar[  "\Hankel_d\of{f}"                 ,  from=KJd  ,  to=KId  ,        ]
\ar[  "\isom"                           ,  from=KJd  ,  to=SJd  ,        ]
\ar[  "\ev{\FJ_d}{M}"                   ,  from=SJd  ,  to=KM1  ,        ]
\ar[  "\text{$C_f$, $\isom$}"           ,  from=KM1  ,  to=KM2  ,        ]
\ar[  "\transpose{\rb{\ev{\FI_d}{M}}}"  ,  from=KM2  ,  to=SId  ,  hook  ]
\ar[  "\isom"'                          ,  from=SId  ,  to=KId  ,        ]
\end{tikzcd}
\end{equation*}
Thus,
the assertion follows immediately from
Theorem~\ref{theorem:prony-structure-characterization}
together with
Remark~\ref{remark:prony-structure-for-generating-systems}.

\ref{theorem:exponential-sums:toeplitz-ZZ^n}
This follows analogously to
part~\ref{theorem:exponential-sums:hankel-NN^n}
using the elementary fact that
\begin{equation*}
\linrank\sumoftext{\Vandermonde_{\FI_d}^{1/M}}
=
\linrank\sumoftext{\Vandermonde_{\FI_d}^M}
\end{equation*}
\trb{\confer~%
\cite[Lemma~\textup{2.31}]{von17}}
and
with
Lemma~\ref{lemma:hankel-toeplitz-decomposition}\,\ref{lemma:hankel-toeplitz-decomposition:hankel}
replaced by
Lemma~\ref{lemma:hankel-toeplitz-decomposition}\,\ref{lemma:hankel-toeplitz-decomposition:toeplitz}.

\ref{theorem:exponential-sums:hankel-ZZ^n}
This follows immediately from
part~\ref{theorem:exponential-sums:hankel-NN^n}.
\end{proof}

In particular,
for $Y\subseteq K^n$
and
$f\in\Exp^n_Y\of{F}$
the notation
$\suppvar_{u_Y}\of{f}$
is justified
by
Theorem~\ref{theorem:exponential-sums}
\trb{and analogously for $Y\subseteq\rb{K\setminus\set{0}}^n$
and $f\in\Exp^n_{\integ,Y}\of{F}$}.

\begin{remark}
As mentioned above,
one advantage of the Hankel Prony structure~$\Hankel$
over the Toeplitz Prony structure~$\Toeplitz$
is that~$\Hankel$ works with exponential sums with arbitrary bases in~$K^n$
while~$\Toeplitz$ needs bases in~$\rb{K\setminus\set{0}}^n$.

On the other hand,
some relevant results in this context are known
only for Toeplitz matrices;
see,~\eg,~%
\cite[Theorem~\textup{3.7}]{KPRv16}.

In the spirit of
D\'iaz-Kaltofen~%
\cite{DK98}
and
Garg-Schost~%
\cite{GS09},
we discuss one additional advantage of the Toeplitz variant
regarding the number of used evaluations.
Let~$K$~be a field extension of~$F$.
Let~$I$~be a set,
$V\le K^I$ be an $F$-vector space of functions~$I\to K$
and~$B$~be a basis of~$V$.
Moreover,
let
$\varphi\colon K\to K$ be an $F$-automorphism of~$K$
such that
for
$b\in B$
we have
$\varphi\comp b\in B$.
Further,
assume that
a subset $I_0\subseteq I$
is given
together with
a function $\psi\colon I\to I$
such that
$\psi\of{I_0}\subseteq I_1\defeq I\setminus I_0$
and for every $f\in V$
the following diagram is commutative:
\begin{equation*}
\begin{tikzcd}
|[alias=I1]|  I  &&  |[alias=K1]|  K  \\
|[alias=I2]|  I  &&  |[alias=K2]|  K
\ar[  "f"         ,  from=I1  ,  to=K1  ]
\ar[  "\psi"      ,  from=I1  ,  to=I2  ]
\ar[  "\varphi"'  ,  from=K2  ,  to=K1  ]
\ar[  "f"         ,  from=I2  ,  to=K2  ]
\end{tikzcd}
\end{equation*}
\trb{It is of course sufficient to check this
diagram for every $f=b\in B$.}
Thus,
under these assumptions,
one can replace the evaluations of~$f$ at~$\alpha\in I_0$
by evaluations of~$\varphi$~at $f\of{\psi\of{\alpha}}$.
One does not need to evaluate at any element of~$I_0$.

An application is the case
$F=\real$,
$K=\complex$,
and the space
$V=\Exp^n_{\integ,\torus^n}\of{\real}$
of exponential sums
with real coefficients
supported on the
analytic torus
\begin{equation*}
\torus^n
=
\pset{z\in\complex^n}{\text{$\abs{z_j}=1$ for $j=1,\dots,n$}}
\subseteq
\complex^n\text{.}
\end{equation*}
Take $\varphi\colon\complex\to\complex$ to be the complex conjugation
and let
$I=\integ^n$,
$\psi\colon I\to I$,
$\alpha\mapsto-\alpha$,
with
$I_0=\pset{\alpha\in I}{\alpha_1<0}$.
In this case,
one can often define the Toeplitz matrix~$\Toeplitz_{\FI,\FJ,d}\of{f}$
using fewer evaluations than in the Hankel matrix~$\Hankel_{\FI,\FJ,d}\of{f}$.

Let
$f\in\Exp^n_{\integ,\rb{K\setminus\set{0}}^n}\of{F}$
be arbitrary.
Then
the number~$s_{\Hankel,\FI,\FJ,d}$
of evaluations needed to define the Hankel matrix~$\Hankel_{\FI,\FJ,d}\of{f}$
can be different from the number~$s_{\Toeplitz,\FI,\FJ,d}$
of evaluations needed to define~$\Toeplitz_{\FI,\FJ,d}\of{f}$,
depending on the choice of~$\FI$~and~$\FJ$.
In general one has
\begin{equation*}
\text{$s_{\Hankel,\FI,\FJ,d}=\card{\FI_d+\FJ_d}$
\quad
and
\quad
$s_{\Toeplitz,\FI,\FJ,d}=\card{\FJ_d-\FI_d}$.}
\end{equation*}
Thus
for example,
in the bivariate case $n=2$
one has
\begin{equation*}
\text{$s_{\Hankel,\FM,\FM,d}
=
s_{\Toeplitz,\FM,\FM,d}$ for all~$d$}
\end{equation*}
and
\begin{equation*}
\text{$s_{\Hankel,\FT,\FT,2}
=
15
\ne
19
=
s_{\Toeplitz,\FT,\FT,2}$.}
\end{equation*}
A more detailed discussion of this fact can be found in
Josz-Lasserre-Mourrain~%
\cite[Section~\textup{2.3.2}]{JLM19}.

It would be interesting to compare
Prony indices $\Pronyindex_{\Hankel_{\FI,\FJ}}\of{f}$
and $\Pronyindex_{\Toeplitz_{\FI,\FJ}}\of{f}$
of $f\in\Exp^n_{\integ,Y}\of{F}$
for various choices of the involved parameters.
\end{remark}

\section{Applications of Prony structures}%
\label{section:applications}

In this section we discuss
several
reconstruction techniques
in the context of Prony structures,
namely
the Dress-Grabmeier framework~%
\cite{DG91},
the
Grigoriev-Karpinski-Singer~%
\cite{GKS91}
and the related
Peter-Plonka framework~%
\cite{PP13}
\trb{see also
Remark~\ref{remark:frameworks}},
sparse polynomial interpolation~\wrt~the monomial
\trb{Ben-Or/Tiwari~%
\cite{BT88}}
and
Chebyshev bases~%
\cite{LS95,PT14,IKY18,HS20+}
and a sparse technique for
Gau\ss{}ian sums~%
\cite{PPS15}.

The following theorem
casts the
Dress-Grabmeier framework~%
\cite{DG91}
for sparse interpolation of character sums
in terms of Prony structures.

\begin{theorem}%
[Prony structure for character sums]%
\label{theorem:character-sums}
Let
$\rb{M,{+}}$
be a commutative monoid generated by
elements~$a_1,\dots,a_n\in M$.
Consider a set~$B$~of monoid homomorphisms
\trb{\idest,~\emph{characters}}
from~$M$~to~$\rb{K,{\mul}}$,
and let
$V\defeq\mgen{B}$
be the $K$-subvector space of~$K^M$~generated by~$B$.
Let
\begin{equation*}
u\colon B\to K^n\text{,}\quad\chi\mapsto\rb{\chi\of{a_1},\dots,\chi\of{a_n}}\text{.}
\end{equation*}
Let~$\FI,\FJ$~be sequences of finite subsets of~$\nat^n$
with $\FI_d\subseteq\FI_{d+1}$
and $\FJ_d\subseteq\FJ_{d+1}$ for all large~$d$
and $\bigcup_{d\in\nat}\FI_d=\bigcup_{d\in\nat}\FJ_d=\nat^n$.
For $f\in V$ and $d\in\nat$ set
\begin{equation*}
P_d\of{f}
\defeq
\sumrb{f\sumof{\sum_{j=1}^n\rb{\alpha_j+\beta_j}a_j}}_{\substack{\alpha\in\FI_d\\\beta\in\FJ_d}}
\in
K^{\FI_d\times\FJ_d}\text{.}
\end{equation*}
Then~$P_d\of{f}$~induces a Prony structure on~$V$.
\end{theorem}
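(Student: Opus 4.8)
The plan is to deduce this from the characterization of Prony structures in Theorem~\ref{theorem:prony-structure-characterization}, the point being that a character sum, evaluated along the canonical map $\nat^n\to M$, turns into an $n$-variate exponential sum, so that the matrices $P_d\of{f}$ are Hankel matrices in disguise. Before anything else I would check that $u$ is injective, which is needed even to apply the setup of Definition~\ref{definition:u-support}. Since $a_1,\dots,a_n$ generate $M$, every $m\in M$ can be written as $m=\sum_{j=1}^nc_ja_j$ with $c_j\in\nat$, and since each $\chi\in B$ is a monoid homomorphism into $\rb{K,\mul}$ \trb{so in particular $\chi\of{0}=1$} we get $\chi\of{m}=\prod_{j=1}^n\chi\of{a_j}^{c_j}$; hence $\chi$ is determined by $u\of{\chi}=\rb{\chi\of{a_1},\dots,\chi\of{a_n}}$, so $u$ is injective.

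Next I would fix $f\in V$ together with a representation $f=\sum_{\chi\in N}f_\chi\chi$, where $N\subseteq B$ is finite and $f_\chi\in K\setminus\set{0}$ for all $\chi\in N$, and verify condition~\ref{theorem:prony-structure-characterization:monomorphism} of Theorem~\ref{theorem:prony-structure-characterization} in the form provided by Remark~\ref{remark:prony-structure-for-generating-systems}, with $u\of{N}$ in place of $\suppvar_u\of{f}$. For $\alpha\in\FI_d$ and $\beta\in\FJ_d$, since the $\chi$ are monoid homomorphisms,
\begin{equation*}
f\sumof{\sum_{j=1}^n\rb{\alpha_j+\beta_j}a_j}
=
\sum_{\chi\in N}f_\chi\prod_{j=1}^n\chi\of{a_j}^{\alpha_j+\beta_j}
=
\sum_{\chi\in N}f_\chi\,u\of{\chi}^{\alpha+\beta}\text{,}
\end{equation*}
so exactly the computation of Lemma~\ref{lemma:hankel-toeplitz-decomposition}\,\ref{lemma:hankel-toeplitz-decomposition:hankel} gives the factorization
\begin{equation*}
P_d\of{f}
=
\transpose{\sumrb{\Vandermonde_{\FI_d}^{u\of{N}}}}\mul C_f\mul\Vandermonde_{\FJ_d}^{u\of{N}}\text{,}
\end{equation*}
where $C_f\in K^{u\of{N}\times u\of{N}}$ is the invertible diagonal matrix of the coefficients $f_\chi$ and $\Vandermonde_{\FI_d}^{u\of{N}}$, $\Vandermonde_{\FJ_d}^{u\of{N}}$ are the matrices of $\ev{\FI_d}{u\of{N}}$ and $\ev{\FJ_d}{u\of{N}}$, respectively. \trb{Equivalently, letting $\pi\colon\nat^n\to M$ be the canonical monoid epimorphism $\gamma\mapsto\sum_{j=1}^n\gamma_ja_j$, one has $f\comp\pi=\sum_{\chi\in N}f_\chi\exp_{u\of{\chi}}\in\Exp^n_{u\of{N}}\of{K}$ and $P_d\of{f}=\Hankel_{\FI,\FJ,d}\of{f\comp\pi}$, so the statement also reduces to Theorem~\ref{theorem:exponential-sums}\,\ref{theorem:exponential-sums:hankel-NN^n}, whose proof uses only the surjectivity of $\ev{\FI_d}{u\of{N}}$ for all large~$d$ and thus applies under the present hypotheses on $\FI$.}

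It remains to produce the required injections. Because $\FI_d\subseteq\FI_{d+1}$ for all large~$d$ and $\bigcup_{d\in\nat}\FI_d=\nat^n$, for all large~$d$ we have $\FT_{\card{N}-1}\subseteq\FI_d$, so $\ev{\FI_d}{u\of{N}}$ is surjective by Lemma~\ref{lemma:polynomial-interpolation} and hence $\transpose{\rb{\ev{\FI_d}{u\of{N}}}}$ is injective; as $C_f$ is invertible, the composition $\eta_d\defeq\transpose{\rb{\ev{\FI_d}{u\of{N}}}}\comp C_f\colon K^{u\of{N}}\emb K^{\FI_d}$ is injective, and the factorization above is precisely the commutative diagram required in condition~\ref{theorem:prony-structure-characterization:monomorphism}. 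By Theorem~\ref{theorem:prony-structure-characterization} together with Remark~\ref{remark:prony-structure-for-generating-systems} it then follows that $B$ is a $K$-basis of $V$ and that $f\mapsto\rb{P_d\of{f}}_{d\in\nat}$ is a Prony structure on $V$. I expect no serious obstacle here: the only ingredient that genuinely goes beyond the exponential-sum case is the injectivity of $u$, which is exactly where the generation hypothesis on $a_1,\dots,a_n$ enters, and the one subtlety is that one must argue with arbitrary representations over the generating set $B$ through Remark~\ref{remark:prony-structure-for-generating-systems}, since linear independence of $B$ is part of the conclusion rather than a hypothesis.
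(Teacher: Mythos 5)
Your proposal is correct and follows essentially the same route as the paper's proof: establish injectivity of $u$ from the generation hypothesis, factor $P_d\of{f}$ as $\transpose{\sumrb{\Vandermonde_{\FI_d}^{u\of{N}}}}\mul C_f\mul\Vandermonde_{\FJ_d}^{u\of{N}}$, and conclude via Lemma~\ref{lemma:polynomial-interpolation}, Theorem~\ref{theorem:prony-structure-characterization}, and Remark~\ref{remark:prony-structure-for-generating-systems}. Your handling of arbitrary finite representations over $N\subseteq B$ is in fact slightly more careful than the paper's phrasing, which is appropriate since linear independence of $B$ is part of the conclusion.
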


\begin{proof}
If $u\of{\chi_1}=u\of{\chi_2}$ for characters~$\chi_i$
then $\chi_1\of{a_j}=\chi_2\of{a_j}$ for all $j=1,\dots,n$.
Since~$M$~is generated by $\set{a_1,\dots,a_n}$
this implies $\chi_1=\chi_2$,
and thus~$u$~is injective.
For $f\in V$
write
$f=\sum_{x\in\suppvar_u\of{f}}f_x\chi_x$
with $f_x\in K$
and $\chi_x\in B$
with
$u\of{\chi_x}=x$.
Let
$C
\defeq
\rb{f_x\unit_x}_{x\in\suppvar_u\of{f}}
\in
K^{\suppvar_u\of{f}\times\suppvar_u\of{f}}$.
A computation on the corresponding matrices
shows that one has the following commutative diagram:
\begin{equation*}
\begin{tikzcd}
|[alias=KJd]|  K^{\FJ_d}  &&                                          &&                                          &&  |[alias=KId]|  K^{\FI_d}  \\
|[alias=SJd]|  S_{\FJ_d}  &&  |[alias=Ksupp1]|  K^{\suppvar_u\of{f}}  &&  |[alias=Ksupp2]|  K^{\suppvar_u\of{f}}  &&  |[alias=SId]|  S_{\FI_d}
\ar[  "P_d\of{f}"                                      ,  from=KJd     ,  to=KId     ]
\ar[  "\isom"                                          ,  from=KJd     ,  to=SJd     ]
\ar[  "\ev{\FJ_d}{\suppvar_u\of{f}}"                   ,  from=SJd     ,  to=Ksupp1  ]
\ar[  "\text{$C$, $\isom$}"                            ,  from=Ksupp1  ,  to=Ksupp2  ]
\ar[  "\transpose{\rb{\ev{\FI_d}{\suppvar_u\of{f}}}}"  ,  from=Ksupp2  ,  to=SId     ]
\ar[  "\isom"'                                         ,  from=SId     ,  to=KId     ]
\end{tikzcd}
\end{equation*}
Clearly,
$C$~is invertible,
and thus~$P$~is a Prony structure on~$V$
by
Lemma~\ref{lemma:polynomial-interpolation},
Theorem~\ref{theorem:prony-structure-characterization},
and
Remark~\ref{remark:prony-structure-for-generating-systems}.
\end{proof}

\begin{remark}
\begin{tlist}
\item
Since
${\exp_b}\in\Hom\of{\rb{\nat^n,{+}},\rb{K,{\mul}}}$,
the Dress-Grabmeier framework
contains the Prony structures for exponential sums.
\item
Note that
Dress-Grabmeier
allows more generally arbitrary monoids
whereas in
Theorem~\ref{theorem:character-sums}
we allow only finitely generated ones.
Roughly speaking,
in applications to function spaces this corresponds
to allowing only a fixed finite number~$n$~of variables.
This is no restriction in any case we have in mind.
\item
Note that
Dress-Grabmeier
implies the Dedekind independence lemma,
\idest,~%
that any set of monoid characters is linearly independent.
\end{tlist}
\end{remark}

Next we present a family of
methods
that was given in the case of one operator
in
Peter-Plonka~%
\cite{PP13}.
See also
Mourrain~%
\cite{Mou18}
for related discussions in the multivariate case
and the book of
Plonka, Potts, Steidl, and Tasche~%
\cite[Section~\textup{10.4.2}]{PPST18}.
Essentially,
it is a generalization of the framework given by
Grigoriev, Karpinski, and Singer~%
\cite{GKS91}
for the case of~$\varDelta$~being a point evaluation functional.
We derive our statement directly from
Theorem~\ref{theorem:character-sums}.

As usual,
the
point spectrum
of an endomorphism
$\varphi\in\End_K\of{W}$
of a $K$-vector space~$W$
is denoted by
\begin{equation*}
\pspec\of{\varphi}
=
\pset{\lambda\in K}{\ker\of{\varphi-\lambda\id_W}\ne\set{0}}
\end{equation*}
and for
$\lambda\in\pspec\of{\varphi}$
let
\begin{equation*}
W^\varphi_\lambda
=
\ker\of{\varphi-\lambda\id_W}
\end{equation*}
be the eigenspace of~$\varphi$~\wrt~$\lambda$.
For pairwise commuting operators
$\varphi_1,\dots,\varphi_n\in\End_K\of{W}$
and
$\alpha\in\nat^n$
we use the notation
\begin{equation*}
\varphi^\alpha\defeq\varphi_1^{\alpha_1}\comp\dots\comp\varphi_n^{\alpha_n}\in\End_K\of{W}\text{.}
\end{equation*}

\begin{corollary}%
[Prony structure for eigenvector sums]%
\label{corollary:eigenvector-sums-multivariate}
Let
$\varphi_1,\dots,\varphi_n\in\End_K\of{W}$
be pairwise commuting operators
and
consider
$\varLambda\subseteq\prod_{j=1}^n\pspec\of{\varphi_j}$.
Assume that
for every
$\lambda\in\varLambda$
we have
$\bigcap_{j=1}^nW^{\varphi_j}_{\lambda_j}\ne\set{0}$
and
choose
\begin{equation*}
b_\lambda\in\bigcap_{j=1}^nW^{\varphi_j}_{\lambda_j}\setminus\set{0}\text{.}
\end{equation*}
Let
\begin{equation*}
\text{$B\defeq\pset{b_\lambda}{\lambda\in\varLambda}$,
\quad
$V\defeq\mgen{B}_K$,
\quad
and
\quad
$u\colon B\to K^n$,
\quad
$b_\lambda\mapsto\lambda$.}
\end{equation*}
Let
$\varDelta\in W^\ast=\Hom_K\of{W,K}$
be such that
\begin{equation*}
V\cap\ker\of{\varDelta}
=
\set{0}\text{.}
\end{equation*}
Let~$\FI,\FJ$~be sequences of finite subsets of~$\nat^n$
with $\FI_d\subseteq\FI_{d+1}$ and $\FJ_d\subseteq\FJ_{d+1}$ for all large~$d$
and $\bigcup_{d\in\nat}\FI_d=\bigcup_{d\in\nat}\FJ_d=\nat^n$.
For $f\in V$ and $d\in\nat$
set
\begin{equation*}
P_d\of{f}
\defeq
\rb{\varDelta\of{\varphi^{\alpha+\beta}\of{f}}}_{\substack{\alpha\in\FI_d\\\beta\in\FJ_d}}\in K^{\FI_d\times\FJ_d}\text{.}
\end{equation*}
Then~$P_d\of{f}$~induces a Prony structure on~$V$.
\end{corollary}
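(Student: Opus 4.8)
The plan is to deduce the statement from Theorem~\ref{theorem:character-sums} by transporting each $f\in V$ to a character sum on the free commutative monoid $\rb{\nat^n,{+}}$. First I would extract the elementary consequences of the hypotheses: since the~$\varphi_j$~commute pairwise, for $\lambda\in\varLambda$ the common eigenvector~$b_\lambda$ satisfies $\varphi_j\of{b_\lambda}=\lambda_jb_\lambda$ for all~$j$, hence $\varphi^\alpha\of{b_\lambda}=\lambda^\alpha b_\lambda$ for every $\alpha\in\nat^n$; as $b_\lambda\ne0$, distinct~$\lambda$~yield distinct~$b_\lambda$, so $u\colon b_\lambda\mapsto\lambda$ is well defined and injective. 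Furthermore $b_\lambda\in V\setminus\set{0}$ together with $V\cap\ker\of{\varDelta}=\set{0}$ forces $\varDelta\of{b_\lambda}\ne0$ --- this non-vanishing is the crucial point below.

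Next I would set up the character-sum side. Put $a_j\defeq\unit_j$, so that $\rb{\nat^n,{+}}$ is the monoid generated by $a_1,\dots,a_n$; for $\lambda\in\varLambda$ let $\chi_\lambda\defeq\exp_\lambda\colon\nat^n\to K$, $\alpha\mapsto\lambda^\alpha$, a monoid character, and set $B'\defeq\pset{\chi_\lambda}{\lambda\in\varLambda}$, $V'\defeq\mgen{B'}_K$, $u'\colon\chi_\lambda\mapsto\rb{\chi_\lambda\of{a_1},\dots,\chi_\lambda\of{a_n}}=\lambda$. The hypotheses on~$\FI,\FJ$~are exactly those of Theorem~\ref{theorem:character-sums}, so the matrices $\widetilde P_d\of{g}\defeq\rb{g\of{\alpha+\beta}}_{\alpha\in\FI_d,\beta\in\FJ_d}$ induce a Prony structure on~$V'$, and in particular~$B'$~is a $K$-basis of~$V'$.

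Now I would build the dictionary $\Phi\colon V\to K^{\nat^n}$, $f\mapsto\rb{\varDelta\of{\varphi^\alpha\of{f}}}_{\alpha\in\nat^n}$, which is $K$-linear. By the eigenvector identity $\Phi\of{b_\lambda}=\varDelta\of{b_\lambda}\chi_\lambda$, a nonzero scalar multiple of~$\chi_\lambda$. Hence~$\Phi$~sends~$B$~bijectively onto a set of nonzero multiples of the basis~$B'$, which is therefore again a $K$-basis of~$V'$; it follows that~$\Phi$~restricts to a $K$-linear isomorphism $V\to V'$ and that~$B$~is a $K$-basis of~$V$ (so that~$\suppvar_u\of{f}$~is meaningful). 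For $f=\sum_{i=1}^rf_ib_{\lambda_i}$ in the basis~$B$ one gets $\Phi\of{f}=\sum_{i=1}^r\rb{f_i\varDelta\of{b_{\lambda_i}}}\chi_{\lambda_i}$ with all coefficients nonzero and distinct~$\chi_{\lambda_i}$, whence $\suppvar_{u'}\of{\Phi\of{f}}=\set{\lambda_1,\dots,\lambda_r}=\suppvar_u\of{f}$.

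Finally I would close the loop: from $\varphi^{\alpha+\beta}=\varphi^\alpha\comp\varphi^\beta$ we read off $P_d\of{f}=\rb{\varDelta\of{\varphi^{\alpha+\beta}\of{f}}}_{\alpha\in\FI_d,\beta\in\FJ_d}=\rb{\Phi\of{f}\of{\alpha+\beta}}_{\alpha\in\FI_d,\beta\in\FJ_d}=\widetilde P_d\of{\Phi\of{f}}$. Since~$\widetilde P$~is a Prony structure on~$V'$ and $\suppvar_{u'}\of{\Phi\of{f}}=\suppvar_u\of{f}$, for all large~$d$ both defining conditions transfer verbatim: $\ZL\of{\ker P_d\of{f}}=\ZL\of{\ker\widetilde P_d\of{\Phi\of{f}}}=\suppvar_{u'}\of{\Phi\of{f}}=\suppvar_u\of{f}$ and $\I_{\FJ_d}\of{\suppvar_u\of{f}}=\I_{\FJ_d}\of{\suppvar_{u'}\of{\Phi\of{f}}}\subseteq\ker\widetilde P_d\of{\Phi\of{f}}=\ker P_d\of{f}$. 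Hence~$P\of{f}$~is a Prony structure for~$f$, for every~$f\in V$, i.e.~$P$~is a Prony structure on~$V$. Given Theorem~\ref{theorem:character-sums}, nothing here is hard; the only point requiring care is that the transport~$\Phi$~loses no information, which rests precisely on $\varDelta\of{b_\lambda}\ne0$ and on the fact that each~$\varphi^\alpha$~acts on the span of the~$b_\lambda$~diagonally with eigenvalue~$\lambda^\alpha$.
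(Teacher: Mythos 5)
Your proof is correct and follows essentially the same route as the paper's: both deduce the corollary from Theorem~\ref{theorem:character-sums} by reading off the data $\varDelta\of{\varphi^\alpha\of{f}}$ as a character sum and matching supports. The only difference is cosmetic --- the paper takes the monoid to be the submonoid of $\End_K\of{W}$ generated by $\varphi_1,\dots,\varphi_n$ with characters $\varphi^\alpha\mapsto\varDelta\of{\varphi^\alpha\of{b_\lambda}}/\varDelta\of{b_\lambda}$, whereas you use the free monoid $\rb{\nat^n,{+}}$ with the exponentials $\exp_\lambda$ and make the transfer explicit via the support-preserving isomorphism $\Phi$, which spares you the (easy) well-definedness check on a possibly non-free monoid.
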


\begin{proof}
We apply
Theorem~\ref{theorem:character-sums}
similarly as in
Grigoriev, Karpinski, and Singer~%
\cite[p.~\textup{78f}]{GKS91}.
Let~$M$~denote the submonoid of~$\rb{\End_K\of{W},{\comp}}$
generated by $\varphi_1,\dots,\varphi_n$.
For
$\lambda\in\varLambda$
let
\begin{equation*}
\chi_\lambda\colon M\to K\text{,}
\quad
\varphi^\alpha\mapsto\frac{\varDelta\of{\varphi^\alpha\of{b_\lambda}}}{\varDelta\of{b_\lambda}}\text{.}
\end{equation*}
Clearly,
$\chi_\lambda$~is well-defined.
Since $\chi_\lambda\of{\varphi^\alpha}=\lambda^\alpha$ for every $\alpha\in\nat^n$,
$\chi_\lambda$ is a monoid homomorphism $M\to\rb{K,{\mul}}$.
Thus,
by
Theorem~\ref{theorem:character-sums},
$Q_d\of{f}=\rb{f\of{\varphi^{\alpha+\beta}}}_{\alpha\in\FI_d,\beta\in\FJ_d}$
induces a Prony structure on the vector space
$U\defeq\pmgen{\chi_\lambda}{\lambda\in\varLambda}_K\le K^M$
with respect to $v\colon\chi_\lambda\mapsto\lambda$.
Since $\suppvar_u\of{b_\lambda}=\lambda=\suppvar_v\of{\chi_\lambda}$,
the assertion follows.
\end{proof}

Observe that there are interesting situations
where the condition that the~$b_\lambda$'s~can be chosen in the desired way
is fulfilled.
For example this is the case if~$W$~is
a finite-dimensional $\complex$-vector space
see,~\eg,~%
Horn-John\-son~%
\cite[Lemma~\textup{1.3.19}]{HJ13}.

With a little more effort in a direct proof,
one can avoid the commutativity assumption
in
Corollary~\ref{corollary:eigenvector-sums-multivariate}
\trb{but of course one still needs that
$\bigcap_{j=1}^nW^{\varphi_j}_{\lambda_j}\ne\set{0}$
for every
$\lambda\in\varLambda$}.

The case $n=1$ identifies
the method in~%
\cite{PP13}
as a Prony structure.

\begin{corollary}%
[Peter-Plonka~%
{\cite[Theorem~\textup{2.1}]{PP13}}]%
\label{corollary:eigenvector-sums-univariate}
Let $\varphi\in\End_K\of{W}$
and
consider
$\varLambda
\subseteq
\pspec\of{\varphi}$.
For $\lambda\in\varLambda$
choose
\begin{equation*}
b_\lambda\in W^\varphi_\lambda\setminus\set{0}\text{.}
\end{equation*}
Let
\begin{equation*}
\text{$B\defeq\pset{b_\lambda}{\lambda\in\varLambda}$,
\quad
$V\defeq\mgen{B}_K$,
\quad
and
\quad
$u\colon B\to K$,
\quad
$b_\lambda\mapsto\lambda$.}
\end{equation*}
Let
$\varDelta\in W^\ast$
be such that
\begin{equation*}
V\cap\ker\of{\varDelta}=\set{0}\text{.}
\end{equation*}
For $f\in V$
and $d\in\nat$
set
\begin{equation*}
P_d\of{f}
\defeq
\rb{\varDelta\of{\varphi^{\alpha+\beta}\of{f}}}_{\substack{\alpha=0,\dots,d-1\\\beta=0,\dots,d}}
\in
K^{d\times\rb{d+1}}\text{.}
\end{equation*}
Then~$P_d\of{f}$~induces a Prony structure on~$V$.
\end{corollary}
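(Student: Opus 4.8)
The plan is to deduce the statement directly from Corollary~\ref{corollary:eigenvector-sums-multivariate} by specializing to $n=1$, so that essentially nothing new has to be proved and only a short verification of hypotheses is needed. First I would record that with a single operator the ``pairwise commuting'' requirement in Corollary~\ref{corollary:eigenvector-sums-multivariate} is vacuous, since $\varphi$ commutes with itself, and that for $\alpha\in\nat$ the notation $\varphi^\alpha$ in that corollary is the ordinary iterate. Next, for $n=1$ the intersection $\bigcap_{j=1}^nW^{\varphi_j}_{\lambda_j}$ is just the eigenspace $W^\varphi_\lambda$, so the hypothesis that this intersection be nonzero for every $\lambda\in\varLambda$ reduces to $W^\varphi_\lambda\ne\set{0}$, which holds automatically because $\varLambda\subseteq\pspec\of{\varphi}$; moreover the element $b_\lambda\in W^\varphi_\lambda\setminus\set{0}$ chosen in the present statement is precisely an admissible choice for $b_\lambda$ in Corollary~\ref{corollary:eigenvector-sums-multivariate}. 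The data $B$, $V$, $u$, and the hypothesis $V\cap\ker\of{\varDelta}=\set{0}$ are then literally those of Corollary~\ref{corollary:eigenvector-sums-multivariate}.

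It then remains to supply admissible index sequences. I would take $\FI_d\defeq\set{0,\dots,d-1}$ (so $\FI_0=\emptyset$) and $\FJ_d\defeq\set{0,\dots,d}$, viewed as finite subsets of $\nat^1=\nat$. These satisfy $\FI_d\subseteq\FI_{d+1}$ and $\FJ_d\subseteq\FJ_{d+1}$ for all~$d$ and $\bigcup_{d\in\nat}\FI_d=\bigcup_{d\in\nat}\FJ_d=\nat$, hence meet the conditions imposed on $\FI$ and $\FJ$ in Corollary~\ref{corollary:eigenvector-sums-multivariate}. With these choices the matrix $\rb{\varDelta\of{\varphi^{\alpha+\beta}\of{f}}}_{\alpha\in\FI_d,\beta\in\FJ_d}$ coincides with the matrix $P_d\of{f}\in K^{d\times\rb{d+1}}$ in the statement, since the index $\alpha$ runs over $0,\dots,d-1$ and $\beta$ over $0,\dots,d$. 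Applying Corollary~\ref{corollary:eigenvector-sums-multivariate} now yields that $f\mapsto\rb{P_d\of{f}}_{d\in\nat}$ induces a Prony structure on~$V$, which is exactly the claim.

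I do not expect a genuine obstacle here, as the corollary is a pure specialization; the only points deserving a word of care are the bookkeeping identification of the row/column index ranges with the sets $\FI_d,\FJ_d$, and the observation that the commutativity and nonzero-intersection hypotheses of Corollary~\ref{corollary:eigenvector-sums-multivariate} are automatic for a single operator. As an aside, one could alternatively give a self-contained proof paralleling that of Corollary~\ref{corollary:eigenvector-sums-multivariate}, routing through the character-sum Prony structure of Theorem~\ref{theorem:character-sums} with the monoid generated by~$\varphi$ and the characters $\chi_\lambda\colon\varphi^\alpha\mapsto\varDelta\of{\varphi^\alpha\of{b_\lambda}}/\varDelta\of{b_\lambda}$, but the specialization argument above is the most economical.
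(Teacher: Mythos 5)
Your proposal is correct and coincides with the paper's own proof, which likewise obtains the statement by taking $n=1$, $\FI_d=\FT_{d-1}$, and $\FJ_d=\FT_d$ in Corollary~\ref{corollary:eigenvector-sums-multivariate}; your added checks (vacuous commutativity, nonemptiness of the eigenspaces, index bookkeeping) are exactly the routine verifications the paper leaves implicit.
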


\begin{proof}
Take
$n=1$,
$\FI_d=\FT_{d-1}$
and
$\FJ_d=\FT_d$
in
Corollary~\ref{corollary:eigenvector-sums-multivariate}.
\end{proof}

\begin{example}
Several applications
for various choices of the endomorphism~$\varphi$~and
the functional~$\varDelta$~can be found in~%
\cite{PP13},
for example,
with~$\varphi\in\End\of{W}$~chosen
as a
Sturm-Liouville differential operator
\trb{$W=\cont^\infty\of{\real}$}
or as a diagonal matrix with distinct elements on the diagonal
\trb{$W=K^n$}.
\end{example}

\begin{remark}%
\label{remark:multiplicities}
Besides
Corollary~\ref{corollary:eigenvector-sums-univariate},
Peter-Plonka~%
\cite[Theorem~\textup{2.4}]{PP13}
extended their method,~\eg,~to include generalized eigenvectors
and multiplicities;
see also
Mourrain~%
\cite{Mou18}
and
Stampfer-Plonka~%
\cite{SP20+}.
At present Prony structures do not cover this variation.
Since all examples we have in mind and which are discussed in this manuscript
do not use generalized eigenvectors and multiplicities,
we omit a detailed discussion here.
See also Remark~\ref{remark:radical-ideals}.
\end{remark}

The following lemma
singles out a simple
transfer principle for Prony structures
that will be applied in
Corollary~\ref{corollary:monomial-sparse-interpolation}
and
Corollary~\ref{corollary:chebyshev-sparse-interpolation}.
It is also one motivation
for the introduction of Prony maps
in
Section~\ref{section:prony-maps}.

\begin{lemma}%
[Transfer principle for Prony structures]%
\label{lemma:transfer-principle}
Let
$V,\widetilde{V}$
be $F$-vector spaces
with bases~$B,\widetilde{B}$,
respectively,
and let
$u\colon B\to K^n$
and
$\widetilde{u}\colon\widetilde{B}\to K^n$
be injective.
Let
$\varphi\colon V\to\widetilde{V}$
\trb{not necessarily linear}
and for every $f\in V$ let
\begin{equation*}
\suppvar_u\of{f}
=
\suppvar_{\widetilde{u}}\of{\varphi\of{f}}\text{.}
\end{equation*}
Then every Prony structure~$\widetilde{P}$~on~$\widetilde{V}$
induces a Prony structure~$\varphi^\ast\of{\widetilde{P}}$~on~$V$
with
\begin{equation*}
\varphi^\ast\of{\widetilde{P}}_d\of{f}
\defeq
\widetilde{P}_d\of{\varphi\of{f}}
\end{equation*}
for $f\in V$ and $d\in\nat$.
The following commutative diagram illustrates the situation.
\begin{equation*}
\begin{tikzcd}
|[alias=V1]|  V  &&  |[alias=V2     ]|  \widetilde{V}                                                 \\
                 &&  |[alias=product]|  \prod_{d\in\nat}K^{\widetilde{\FI}_d\times\widetilde{\FJ}_d}
\ar[  "\varphi"                          ,  from=V1  ,  to=V2       ]
\ar[  "\widetilde{P}"                    ,  from=V2  ,  to=product  ]
\ar[  "\varphi^\ast\of{\widetilde{P}}"'  ,  from=V1  ,  to=product  ]
\end{tikzcd}
\end{equation*}
\end{lemma}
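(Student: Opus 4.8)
The plan is to unwind Definition~\ref{definition:prony-structure} directly; all the content sits in the support hypothesis, and no computation is needed. First I would fix an arbitrary $f\in V$ and abbreviate $g\defeq\varphi\of{f}\in\widetilde{V}$. Since~$\widetilde{P}$~is by assumption a Prony structure on~$\widetilde{V}$, the family $\widetilde{P}\of{g}=\rb{\widetilde{P}_d\of{g}}_{d\in\nat}$ is a Prony structure for~$g$. Hence there is a $c\in\nat$, and one may take $c=\Pronyindex_{\widetilde{P}}\of{g}$, such that for all $d\ge c$ one has $\ZL\of{\ker\widetilde{P}_d\of{g}}=\suppvar_{\widetilde{u}}\of{g}$ and $\I_{\widetilde{\FJ}_d}\of{\suppvar_{\widetilde{u}}\of{g}}\subseteq\ker\of{\widetilde{P}_d\of{g}}$.

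Next I would feed in the hypothesis $\suppvar_u\of{f}=\suppvar_{\widetilde{u}}\of{\varphi\of{f}}=\suppvar_{\widetilde{u}}\of{g}$. Since by definition $\varphi^\ast\of{\widetilde{P}}_d\of{f}=\widetilde{P}_d\of{g}\in K^{\widetilde{\FI}_d\times\widetilde{\FJ}_d}$, the sequences of finite index sets governing $\varphi^\ast\of{\widetilde{P}}$ are exactly~$\widetilde{\FI}$ and~$\widetilde{\FJ}$; substituting, the two conditions above become, for all $d\ge c$, the statements $\ZL\of{\ker\varphi^\ast\of{\widetilde{P}}_d\of{f}}=\suppvar_u\of{f}$ and $\I_{\widetilde{\FJ}_d}\of{\suppvar_u\of{f}}\subseteq\ker\of{\varphi^\ast\of{\widetilde{P}}_d\of{f}}$. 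These are precisely the requirements of Definition~\ref{definition:prony-structure} for $\varphi^\ast\of{\widetilde{P}}\of{f}$ to be a Prony structure for~$f$. As~$f$~was arbitrary, $\varphi^\ast\of{\widetilde{P}}$ is a Prony structure on~$V$. The same chain of equalities moreover shows that the least admissible~$c$ is unchanged, so $\Pronyindex_{\varphi^\ast\of{\widetilde{P}}}\of{f}=\Pronyindex_{\widetilde{P}}\of{\varphi\of{f}}$ for every $f\in V$.

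I do not expect any genuine obstacle: the argument never uses the vector space structure of~$V$ or~$\widetilde{V}$, which is why $\varphi$~need not be linear — the notion of a Prony structure constrains the assignment $f\mapsto P\of{f}$ only pointwise in~$f$, and the support condition transports each pointwise requirement across~$\varphi$ one-to-one. The only step needing (minimal) care is the bookkeeping that identifies the index sequences of the induced structure with those of~$\widetilde{P}$; everything else is a direct substitution.
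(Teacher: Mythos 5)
Your proof is correct and follows exactly the paper's argument: both simply substitute the hypothesis $\suppvar_u\of{f}=\suppvar_{\widetilde{u}}\of{\varphi\of{f}}$ into the two defining conditions of a Prony structure for $\widetilde{P}\of{\varphi\of{f}}$. Your added remark that $\Pronyindex_{\varphi^\ast\of{\widetilde{P}}}\of{f}=\Pronyindex_{\widetilde{P}}\of{\varphi\of{f}}$ is a correct (if unstated in the paper) bonus.
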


\begin{proof}
Let
$P\defeq\varphi^\ast\of{\widetilde{P}}$.
By the hypotheses,
for $f\in V$
and
all large~$d$
we have
\begin{equation*}
\suppvar_u\of{f}
=
\suppvar_{\widetilde{u}}\of{\varphi\of{f}}
=
\ZL\of{\ker\widetilde{P}_d\of{\varphi\of{f}}}
=
\ZL\of{\ker P_d\of{f}}
\end{equation*}
and
\begin{equation*}
\I_{\widetilde{\FJ}_d}\of{\suppvar_u\of{f}}
=
\I_{\widetilde{\FJ}_d}\of{\suppvar_{\widetilde{u}}\of{\varphi\of{f}}}
\subseteq
\ker\of{\widetilde{P}_d\of{\varphi\of{f}}}
=
\ker\of{P_d\of{f}}\text{.}
\end{equation*}
This concludes the proof.
\end{proof}

The following corollary identifies
a well-known sparse interpolation technique for polynomials~\wrt~the monomial basis
\trb{see,~\eg,~%
\cite[Section~\textup{5.4}]{Mou18}}
as a Prony structure.
In particular,
the framework of Prony structures allows a simultaneous proof
of the Hankel and Toeplitz cases.
There are analogous results for the Chebyshev basis
\trb{see
Corollary~\ref{corollary:chebyshev-sparse-interpolation}}.

Let~$F$~be a field
and
consider
\begin{equation*}
V\defeq F\ringad{\Y_1,\dots,\Y_n}
\end{equation*}
as an $F$-vector space
with the monomial basis
\begin{equation*}
B\defeq\pset{\Y^\alpha}{\alpha\in\nat^n}\text{.}
\end{equation*}
Choose a field extension~$K$~of~$F$~and let
$b\in\rb{K\setminus\set{0}}^n$
be such that the function
\begin{equation*}
\text{$u\colon B\to K^n$,
\quad
$\Y^\alpha\mapsto\rb{b_1^{\alpha_1},\dots,b_n^{\alpha_n}}$,}
\end{equation*}
is injective.%
\footnote{%
For example,
for $K=F=\complex$,
any $b\in\complex^n$ such that
$b_j\ne0$ and $b_j$~is not a root of unity for all $j=1,\dots,n$ works.
Of course,
$K$~cannot be finite,
for otherwise
$u\colon B\to K^n$
cannot be injective.
One may always choose $K\defeq F\fieldad{\W}$
\trb{with~$\W$~an indeterminate over~$F$}
and
$b\defeq\rb{\W,\dots,\W}\in K^n$.%
}
Observe that then necessarily
$u\of{B}\subseteq\rb{K\setminus\set{0}}^n$.

Moreover,
set
$\widetilde{V}\defeq\Exp^n_{\integ,u\of{B}}\of{F}$,
$\widetilde{B}\defeq\pset{\exp_b}{b\in u\of{B}}$,
and
$\widetilde{u}\colon\widetilde{B}\to K^n$, ${\exp_b}\mapsto b$.

\begin{corollary}%
[Prony structures for sparse polynomial interpolation]%
\label{corollary:monomial-sparse-interpolation}
For $p\in V$
let
\begin{equation*}
f_p\colon\integ^n\to K\text{,}
\quad
\alpha\mapsto p\of{b_1^{\alpha_1},\dots,b_n^{\alpha_n}}
\text{~\trb{$=p\of{u\of{\Y^\alpha}}$ if $\alpha\in\nat^n$}.}
\end{equation*}
Then the following holds:
\begin{tlist}
\item%
\label{corollary:monomial-sparse-interpolation:phi(f)-exponential-sum}
For all $p\in V$
we have
$f_p\in\widetilde{V}$
and
$\varphi\colon V\to\widetilde{V}$,
$p\mapsto f_p$,
is $F$-linear.
\item%
\label{corollary:monomial-sparse-interpolation:supp(phi(f))}
For all $p\in V$
we have
$\suppvar_{\widetilde{u}}\of{f_p}=\suppvar_u\of{p}$.
\end{tlist}
Hence,
any Prony structure on~$\widetilde{V}$
\trb{in particular
the Prony structures
from
Theorem~\ref{theorem:exponential-sums}},
induces a Prony structure on~$V$
by the transfer principle
\trb{Lemma~\ref{lemma:transfer-principle}}.
\end{corollary}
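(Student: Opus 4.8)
The plan is to establish the two parts by a single elementary expansion of $f_p$ into exponentials and then to read off the final assertion from the transfer principle (Lemma~\ref{lemma:transfer-principle}). First, for part~\ref{corollary:monomial-sparse-interpolation:phi(f)-exponential-sum} I would write $p=\sum_{\alpha\in\Lambda}p_\alpha\Y^\alpha$ with $\Lambda\subseteq\nat^n$ finite and $p_\alpha\in F$, and compute, for $\beta\in\integ^n$,
\begin{align*}
f_p\of{\beta}
&=
p\of{b_1^{\beta_1},\dots,b_n^{\beta_n}}
=
\sum_{\alpha\in\Lambda}p_\alpha\prod_{j=1}^n\of{b_j^{\beta_j}}^{\alpha_j}\\
&=
\sum_{\alpha\in\Lambda}p_\alpha\prod_{j=1}^n\of{b_j^{\alpha_j}}^{\beta_j}
=
\sum_{\alpha\in\Lambda}p_\alpha\exp_{\integ,u\of{\Y^\alpha}}\of{\beta}\text{.}
\end{align*}
Here the hypothesis that every $b_j\ne0$ is used twice: it guarantees $u\of{\Y^\alpha}=\of{b_1^{\alpha_1},\dots,b_n^{\alpha_n}}\in\of{K\setminus\set{0}}^n$, so that $\exp_{\integ,u\of{\Y^\alpha}}$ is defined on all of~$\integ^n$, and it makes $f_p$ a genuine function $\integ^n\to K$. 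Since $u\of{\Y^\alpha}\in u\of{B}$ for all~$\alpha$, the displayed identity shows $f_p=\sum_{\alpha\in\Lambda}p_\alpha\exp_{\integ,u\of{\Y^\alpha}}\in\mgen{\widetilde{B}}_F=\widetilde{V}$; reading the same identity coefficientwise in~$\beta$ gives $f_{p+q}=f_p+f_q$ and $f_{cp}=cf_p$ for $c\in F$, so that $\varphi\colon p\mapsto f_p$ is $F$-linear.

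For part~\ref{corollary:monomial-sparse-interpolation:supp(phi(f))} I would pass to the representation of~$p$ with nonzero coefficients, $p=\sum_{i=1}^rp_i\Y^{\alpha_i}$ with $p_i\in F\setminus\set{0}$ and $\Y^{\alpha_1},\dots,\Y^{\alpha_r}$ pairwise distinct, so that $\suppvar_u\of{p}=\set{u\of{\Y^{\alpha_1}},\dots,u\of{\Y^{\alpha_r}}}$. By the computation above, $f_p=\sum_{i=1}^rp_i\exp_{\integ,u\of{\Y^{\alpha_i}}}$. Injectivity of~$u$ makes the base points $u\of{\Y^{\alpha_1}},\dots,u\of{\Y^{\alpha_r}}$ pairwise distinct, and by Theorem~\ref{theorem:exponential-sums}\,\ref{theorem:exponential-sums:hankel-ZZ^n} the set $\widetilde{B}=B_{\integ,u\of{B}}$ is an $F$-basis of~$\widetilde{V}$; hence the above is precisely the expansion of $f_p$ in the basis~$\widetilde{B}$ with nonzero coefficients. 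Therefore $\basissupp_{\widetilde{B}}\of{f_p}=\set{\exp_{\integ,u\of{\Y^{\alpha_i}}}\colon i=1,\dots,r}$ and, applying~$\widetilde{u}$,
\begin{equation*}
\suppvar_{\widetilde{u}}\of{f_p}
=
\set{u\of{\Y^{\alpha_1}},\dots,u\of{\Y^{\alpha_r}}}
=
\suppvar_u\of{p}\text{.}
\end{equation*}

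With part~\ref{corollary:monomial-sparse-interpolation:supp(phi(f))} in hand, the hypotheses of the transfer principle are satisfied for $\varphi\colon V\to\widetilde{V}$ (with the identifications~$u$ on~$B$ and~$\widetilde{u}$ on~$\widetilde{B}$), so Lemma~\ref{lemma:transfer-principle} yields that every Prony structure~$\widetilde{P}$ on~$\widetilde{V}$---in particular the Hankel and Toeplitz structures of Theorem~\ref{theorem:exponential-sums}---induces the Prony structure $\varphi^\ast\of{\widetilde{P}}$ on~$V$, given by $\varphi^\ast\of{\widetilde{P}}_d\of{p}=\widetilde{P}_d\of{f_p}$. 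As for where the work really lies, the argument is essentially routine, and the only points that need care are the two already flagged: that $f_p$ lands in the $\integ$-exponential space~$\widetilde{V}$ rather than merely in functions on~$\nat^n$, which is exactly the role of the hypothesis $u\of{B}\subseteq\of{K\setminus\set{0}}^n$, and that the passage from the coefficient expansion of~$f_p$ to its \emph{support} neither drops nor merges terms, which uses both injectivity of~$u$ and the fact that~$\widetilde{B}$ is a basis. I do not anticipate a serious obstacle beyond this bookkeeping.
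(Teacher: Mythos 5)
Your proposal is correct and follows essentially the same route as the paper: the same expansion of $f_p\of{\beta}$ into $\sum_\alpha p_\alpha\exp_{\integ,u\of{\Y^\alpha}}\of{\beta}$ for part~(a), and injectivity of~$u$ for part~(b), followed by Lemma~\ref{lemma:transfer-principle}. Your explicit appeal to $\widetilde{B}$ being a basis (so that the displayed expansion really is the basis expansion and no terms merge or cancel) is a point the paper leaves implicit, but it is the same argument.
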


\begin{proof}
\ref{corollary:monomial-sparse-interpolation:phi(f)-exponential-sum}
Let
$\polysupp\of{p}=\pset{\beta\in\nat^n}{\Y^\beta\in\basissupp_B\of{p}}$.
For
$\alpha\in\integ^n$
and using
Definition~\ref{definition:exponential-sums-with-domain-NN^n}
we have
\begin{equation*}
f_p\of{\alpha}
=
\sum_{\beta\in\polysupp\of{p}}p_\beta\mul\rb{b_1^{\alpha_1},\dots,b_n^{\alpha_n}}^\beta
=
\sum_{\beta\in\polysupp\of{p}}p_\beta\mul\rb{b_1^{\beta_1},\dots,b_n^{\beta_n}}^\alpha
=
\sum_{\beta\in\polysupp\of{p}}p_\beta\mul\exp_{\integ,u\of{\Y^\beta}}\of{\alpha}\text{.}
\end{equation*}
This shows that
$f_p\in\Exp^n_{\integ,u\of{B}}\of{F}=\widetilde{V}$.
In particular,
$\varphi$~is well-defined.
The linearity of~$\varphi$~follows immediately from the definition.

\ref{corollary:monomial-sparse-interpolation:supp(phi(f))}
Since~$u$~is injective,
the computation
in the proof of
part~\ref{corollary:monomial-sparse-interpolation:phi(f)-exponential-sum}
shows that
\begin{equation*}
\suppvar_{\widetilde{u}}\of{f_p}
=
\pset{u\of{\Y^\beta}}{\beta\in\polysupp\of{p}}
=
\pset{u\of{m}}{m\in\basissupp_B\of{p}}
=
\suppvar_u\of{p}\text{.}
\end{equation*}
This concludes the proof.
\end{proof}

\begin{example}
The reconstruction method for $p\in V=F\ringad{\Y_1,\dots,\Y_n}$
from
Corollary~\ref{corollary:monomial-sparse-interpolation}
is efficient if~$p$~has small rank,
\idest,~is a \qquot{sparse polynomial}.
To give an illustration,
let
$n=2$,
$b\in\rb{K\setminus\set{0}}^n$
be chosen appropriately
and
$p=\Y^\beta-\Y^\gamma\in V$
be a binomial.
Then $\basisrank\of{f_p}=2$,
hence
the polynomial~$p$~can be reconstructed,
independently of its degree,
from the
$\card{\FT_3}
=
\binom{n+3}{3}
=
\binom{5}{3}
=
10$
evaluations used for the matrix
$\Hankel_{\FT_1,\FT_2}\of{f_p}$.

The number of evaluations of~$p$
can be further reduced if~$p$~is known to be of degree at most~$d-1$.
In this case,
$q\defeq p\of{\Z,\Z^d,\dots,\Z^{d^{n-1}}}\in F\ringad{\Z}$
is a binomial of degree at most~$d^n-1$
in~\emph{one}~variable.
The above binomial can thus be reconstructed from four evaluations.
\end{example}

Let
$\cheb_i\in\integ\ringad{\Y}$
denote the~$i$-th~Chebyshev polynomial
\trb{\idest,~%
$\cheb_0=1$,
$\cheb_1=\Y$,
and
$\cheb_i=2\Y\cheb_{i-1}-\cheb_{i-2}$
for $i\ge2$}.
It is well-known
\trb{and immediate}
that
$B\defeq\pset{\cheb_i}{i\in\nat}$
is a $\rat$-basis of
$V\defeq\rat\ringad{\Y}$.

Decomposing a polynomial $f\in\rat\ringad{\Y}$~%
\wrt~the Chebyshev basis~$B$~is in principle possible
by first decomposing~$f$~in terms of the monomial basis
\trb{Corollary~\ref{corollary:monomial-sparse-interpolation}}
and then computing the Chebyshev decomposition from that.
However,
the natural assumption of an upper bound on the rank of~$f$~\wrt~$B$
does not imply an upper bound on the rank of~$f$~\wrt~the monomial basis,
so that it may be impossible to check the premises of
Corollary~\ref{corollary:monomial-sparse-interpolation}.
Even if such a bound would be given,
efficiency would be a concern.
Lakshman and Saunders~%
\cite{LS95}
proposed a sparse method to compute Chebyshev decompositions directly,
which we recast in the framework of Prony structure in the following.
We first prove a Prony structure for an analogue of exponential sums
in the Chebyshev setting
\trb{Theorem~\ref{theorem:chebyshev-exponential-sums}}.
The Prony structure for Chebyshev-sparse polynomial interpolation
of Lakshman and Saunders~%
\cite{LS95}
then follows in exactly the same way as for
\qquot{monomial-sparse}
polynomial interpolation
\trb{Corollary~\ref{corollary:chebyshev-sparse-interpolation}}.

As observed in
Lakshman-Saunders~%
\cite[p.~\textup{390}]{LS95},
the crucial properties of the Chebyshev polynomials
for their Prony structures
are that for all $i,j\in\nat$ one has the
\emph{linearization relation}
\begin{equation}%
\label{equation:chebyshev-linearization}
\cheb_i\mul\cheb_j
=
\frac{1}{2}\rb{\cheb_{i+j}+\cheb_{\abs{i-j}}}
\end{equation}
and the
\emph{commutativity relation}
\begin{equation}%
\label{equation:chebyshev-commutativity}
\cheb_i\of{\cheb_j}
=
\cheb_j\of{\cheb_i}\text{.}
\end{equation}

The following definition is the Chebyshev analogue
of the exponentials of
Section~\ref{section:exponential-sums}.

\begin{definition}
Let~$F$~be a field of characteristic zero
and~$K$~be a field extension of~$F$.
For $b\in K$
call the function
\begin{equation*}
{\chebexp_b}\colon\nat\to K\text{,}\quad i\mapsto\cheb_i\of{b}\text{,}
\end{equation*}
\emph{Chebyshev exponential with base~$b$}
and for a subset $Y\subseteq K$
denote by
\begin{equation*}
\chebExp_Y\of{F}\defeq\pmgen{{\chebexp_b}}{b\in Y}_F
\end{equation*}
the $F$-vector space of
\emph{Chebyshev exponential sums with bases in~$Y$}.
\end{definition}

\begin{remark}
Observe that considered merely as vector spaces,
$\Exp_Y\of{F}$
and
$\chebExp_Y\of{F}$
are identical.
However,
here we consider them equipped with the bases
of exponentials and Chebyshev exponentials,
respectively,
and provide the notation to keep track of this difference.
\end{remark}

\begin{theorem}%
[Prony structures for Chebyshev exponential sums]%
\label{theorem:chebyshev-exponential-sums}
For
$f\in\chebExp_Y\of{F}$
and
$d\in\nat$
let
\begin{equation*}
P^\prime_d\of{f}
\defeq
\rb{f\of{i+j}+f\of{\abs{i-j}}}_{\substack{i=0,\dots,d-1\\j=0,\dots,d}}
\in
K^{d\times\rb{d+1}}
\end{equation*}
\trb{which is the sum of a Hankel and a Toeplitz matrix}.
Let~$\psi\in\rat^{\rb{d+1}\times\rb{d+1}}$~be the change of basis from the monomial to the Chebyshev basis
and
\begin{equation*}
P_d\of{f}\defeq P^\prime_d\of{f}\mul\psi\text{.}
\end{equation*}
Then~$P_d\of{f}$~induces a Prony structure on~$\chebExp_Y\of{F}$~\wrt~%
\begin{equation*}
u\colon B\to K\text{,}\quad\chebexp_b\mapsto\chebexp_b\of{1}=b\text{.}
\end{equation*}
\end{theorem}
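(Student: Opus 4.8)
The plan is to verify condition~\ref{theorem:prony-structure-characterization:monomorphism} of Theorem~\ref{theorem:prony-structure-characterization} (in the generating-set form of Remark~\ref{remark:prony-structure-for-generating-systems}), exactly along the lines of the proofs of Theorem~\ref{theorem:exponential-sums} and Theorem~\ref{theorem:character-sums}. First, $u$ is injective since $\chebexp_b\of{1}=\cheb_1\of{b}=b$. Fix a finite subset $M\subseteq B$ and coefficients $\rb{f_b}_{b\in M}\in\rb{F\setminus\set{0}}^M$, put $f\defeq\sum_{b\in M}f_b\chebexp_b$ and $X\defeq u\of{M}$, so that $X\subseteq K$ consists of $\card{M}$ pairwise distinct elements and $f\of{k}=\sum_{b\in X}f_b\cheb_k\of{b}$ for all $k\in\nat$.

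The computational heart is the linearization relation~\eqref{equation:chebyshev-linearization}: evaluating $\cheb_i\mul\cheb_j=\frac{1}{2}\rb{\cheb_{i+j}+\cheb_{\abs{i-j}}}$ at $b\in K$ gives $\cheb_{i+j}\of{b}+\cheb_{\abs{i-j}}\of{b}=2\cheb_i\of{b}\cheb_j\of{b}$, so that
\begin{equation*}
f\of{i+j}+f\of{\abs{i-j}}=2\sum_{b\in X}f_b\cheb_i\of{b}\cheb_j\of{b}\text{.}
\end{equation*}
Writing $W_e\defeq\rb{\cheb_i\of{b}}_{b\in X,\,0\le i\le e}$ and letting $C_f$ be the diagonal matrix carrying the $f_b$, this reads $P^\prime_d\of{f}=2\transpose{\rb{W_{d-1}}}\mul C_f\mul W_d$. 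Now $\cheb_0,\dots,\cheb_e$ is a basis of $S_{\le e}$ (since $\deg\cheb_i=i$), and $W_e$ is exactly the matrix of $\ev{\le e}{X}$ with respect to this Chebyshev basis of the domain and the canonical basis of $K^X$; since right-multiplication by $\psi$ re-expresses monomial coordinates in the Chebyshev basis, $W_d\mul\psi=\Vandermonde_{\le d}^X$ is the matrix of $\ev{\le d}{X}$ with respect to the monomial basis. Hence
\begin{equation*}
P_d\of{f}=P^\prime_d\of{f}\mul\psi=2\transpose{\rb{W_{d-1}}}\mul C_f\mul\Vandermonde_{\le d}^X\text{,}
\end{equation*}
so that, under the monomial identification $K^{\FT_d}\isom S_{\le d}$ of Definition~\ref{definition:prony-structure} and with $\FI_d\defeq\FT_{d-1}$ and $\FJ_d\defeq\FT_d$, the map $P_d\of{f}$ equals $\eta_d\comp\ev{\le d}{X}$ with $\eta_d\defeq2\transpose{\rb{W_{d-1}}}\comp C_f\colon K^X\to K^{\FI_d}$.

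Finally, for every $d\ge\card{M}$ the map $\eta_d$ is injective: $C_f$ is invertible, and $\transpose{\rb{W_{d-1}}}$ is injective because $W_{d-1}$ is the matrix of $\ev{\le d-1}{X}$, which is surjective by Lemma~\ref{lemma:polynomial-interpolation} (the elements of $X$ are distinct and $d-1\ge\card{X}-1$), so $W_{d-1}$ has rank $\card{X}$. Likewise $\ev{\le d}{X}$ is surjective for $d\ge\card{M}$, so the diagram in condition~\ref{theorem:prony-structure-characterization:monomorphism} commutes; and $\FJ_d=\FT_d$ satisfies $\FT_d\subseteq\FT_{d+1}$ and $\bigcup_{d\in\nat}\FT_d=\nat$. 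Thus Theorem~\ref{theorem:prony-structure-characterization} together with Remark~\ref{remark:prony-structure-for-generating-systems} will yield that $B$ is an $F$-basis of $\chebExp_Y\of{F}$ and that $P$ is a Prony structure on it. The step I expect to need the most care is the bookkeeping around $\psi$: one must make sure that right-multiplication by the change-of-basis matrix is precisely what converts the \qquot{Chebyshev-coordinate} matrix $P^\prime_d\of{f}$ into the monomial-coordinate matrix required by the Prony-structure formalism. The characteristic-zero hypothesis enters only to make $\psi$ (and the factor $2$) available over $K$, and the linearization relation~\eqref{equation:chebyshev-linearization} plays here the role that $b^{\alpha+\beta}=b^\alpha b^\beta$ plays for exponential sums.
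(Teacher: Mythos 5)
Your proof is correct and follows the same route as the paper's: establish injectivity of~$u$, use the linearization relation to factor $P^\prime_d\of{f}$ as $\transpose{\rb{W_{d-1}}}\mul\rb{2C_f}\mul W_d$ in Chebyshev coordinates, let $\psi$ convert the right-hand factor into the monomial-coordinate evaluation matrix, and conclude via Lemma~\ref{lemma:polynomial-interpolation}, Theorem~\ref{theorem:prony-structure-characterization}\,\ref{theorem:prony-structure-characterization:monomorphism}, and Remark~\ref{remark:prony-structure-for-generating-systems}. The only difference is that you carry out explicitly the matrix computation that the paper delegates to Lakshman--Saunders, and you absorb the factor~$2$ into $\eta_d$ rather than into the diagonal matrix~$C$; both are immaterial.
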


\begin{proof}
The injectivity of~$u$~follows immediately from the definition.

Let $S\defeq K\ringad{\X}$.
The lower part of the following diagram is commutative
by a computation analogous to
Lakshman-Saunders~%
\cite[proof of
Lemma~\textup{6}]{LS95}
\trb{using the linearization relation~\eqref{equation:chebyshev-linearization} above},
where the vertical isomorphisms are those given by the basis
$\set{\cheb_0,\dots,\cheb_d}$
of~$S_{\le d}$
and~$C$~is the isomorphism given by the diagonal matrix
$C\defeq\rb{2f_{T}\unit_T}_{T\in\basissupp_B\of{f}}$.
\begin{equation*}
\begin{tikzcd}
|[alias=Kd+1']|  K^{d+1}    &&                                          &&                                          &&  |[alias=Kd' ]|  K^d          \\
|[alias=Kd+1 ]|  K^{d+1}    &&                                          &&                                          &&  |[alias=Kd  ]|  K^d          \\
|[alias=Sd   ]|  S_{\le d}  &&  |[alias=Ksupp1]|  K^{\suppvar_u\of{f}}  &&  |[alias=Ksupp2]|  K^{\suppvar_u\of{f}}  &&  |[alias=Sd-1]|  S_{\le d-1}
\ar[  "P_d\of{f}"                                        ,  from=Kd+1'   ,  to=Kd'     ]
\ar[  equal                                              ,  from=Kd      ,  to=Kd'     ]
\ar[  "\text{$\psi$, $\isom$}"                           ,  from=Kd+1'   ,  to=Kd+1    ]
\ar[  "P^\prime_d\of{f}"                                 ,  from=Kd+1    ,  to=Kd      ]
\ar[  "\isom"                                            ,  from=Kd+1    ,  to=Sd      ]
\ar[  "\isom"'                                           ,  from=Sd-1    ,  to=Kd      ]
\ar[  "\ev{\le d}{\suppvar_u\of{f}}"                     ,  from=Sd      ,  to=Ksupp1  ]
\ar[  "\text{$C$, $\isom$}"                              ,  from=Ksupp1  ,  to=Ksupp2  ]
\ar[  "\transpose{\rb{\ev{\le d-1}{\suppvar_u\of{f}}}}"  ,  from=Ksupp2  ,  to=Sd-1    ]
\end{tikzcd}
\end{equation*}
The upper part of the diagram is commutative
by the definition of~$P_d\of{f}$
and thus
the assertion follows
from
Lemma~\ref{lemma:polynomial-interpolation},
Theorem~\ref{theorem:prony-structure-characterization},
and
Remark~\ref{remark:prony-structure-for-generating-systems}.
\end{proof}

It is now straightforward
to derive a well-known sparse interpolation technique
for polynomials~\wrt~the Chebyshev basis
\trb{see,~\eg,~%
Lakshman-Saunders~%
\cite{LS95}}
by transferring
the Prony structure for Chebyshev exponential sums
from
Theorem~\ref{theorem:chebyshev-exponential-sums}
to the space of polynomials
using
Lemma~\ref{lemma:transfer-principle}.
To this end,
let~$F$~be a field
of characteristic zero
and
consider
\begin{equation*}
V\defeq F\ringad{\Y}
\end{equation*}
as an $F$-vector space
with the Chebyshev basis
\begin{equation*}
B\defeq\pset{\cheb_i}{i\in\nat}\text{.}
\end{equation*}
Choose a field extension~$K$~of~$F$~and let
$b\in K$
be such that the function
\begin{equation*}
\text{$u\colon B\to K$,
\quad
$\cheb_i\mapsto\cheb_i\of{b}$,}
\end{equation*}
is injective.%
\footnote{%
A choice that always works is $b\in\rat\subseteq F$ with $b>1$.%
}

Moreover,
set
$\widetilde{V}\defeq\chebExp_{u\of{B}}\of{F}$,
$\widetilde{B}\defeq\pset{\chebexp_b}{b\in u\of{B}}$,
and
$\widetilde{u}\colon\widetilde{B}\to K$,
${\chebexp_b}\mapsto b$.

\begin{corollary}%
[Prony structure for Chebyshev-sparse polynomial interpolation]%
\label{corollary:chebyshev-sparse-interpolation}
For $p\in V$
let
\begin{equation*}
f_p\colon\nat\to K\text{,}
\quad
i\mapsto
p\of{u\of{\cheb_i}}\text{.}
\end{equation*}
Then the following holds:
\begin{tlist}
\item%
\label{corollary:chebyshev-sparse-interpolation:phi(f)-chebyshev-exponential-sum}
For all $p\in V$
we have
$f_p\in\widetilde{V}$
and
$\varphi\colon V\to\widetilde{V}$,
$p\mapsto f_p$,
is $F$-linear.
\item%
\label{corollary:chebyshev-sparse-interpolation:supp(phi(f))}
For all $p\in V$
we have
$\suppvar_{\widetilde{u}}\of{f_p}=\suppvar_u\of{p}$.
\end{tlist}
Hence,
any Prony structure on~$\widetilde{V}$
\trb{in particular
the Prony structure
from
Theorem~\ref{theorem:chebyshev-exponential-sums}},
induces a Prony structure on~$V$
by the transfer principle
\trb{Lemma~\ref{lemma:transfer-principle}}.
\end{corollary}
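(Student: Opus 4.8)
The plan is to follow the proof of Corollary~\ref{corollary:monomial-sparse-interpolation} essentially verbatim, with the commutativity relation~\eqref{equation:chebyshev-commutativity} playing the role that the identity $\rb{b_j^{\alpha_j}}^{\beta_j}=\rb{b_j^{\beta_j}}^{\alpha_j}$ played there. As a preliminary observation, $\widetilde{u}$ is injective, since evaluating a Chebyshev exponential at $1$ recovers its base: $\chebexp_b\of{1}=\cheb_1\of{b}=b$.

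For part~\ref{corollary:chebyshev-sparse-interpolation:phi(f)-chebyshev-exponential-sum}, I would write $p=\sum_{j\in\polysupp\of{p}}p_j\cheb_j$ with $p_j\in F\setminus\set{0}$, where $\polysupp\of{p}\defeq\pset{j\in\nat}{\cheb_j\in\basissupp_B\of{p}}$. Then for every $i\in\nat$,
\begin{align*}
f_p\of{i}
&=
p\of{\cheb_i\of{b}}
=
\sum_{j\in\polysupp\of{p}}p_j\cheb_j\of{\cheb_i\of{b}}\\
&=
\sum_{j\in\polysupp\of{p}}p_j\cheb_i\of{\cheb_j\of{b}}
=
\sum_{j\in\polysupp\of{p}}p_j\chebexp_{u\of{\cheb_j}}\of{i}\text{,}
\end{align*}
where the equality opening the second line is the commutativity relation~\eqref{equation:chebyshev-commutativity} and the last one is the definition of Chebyshev exponentials together with $u\of{\cheb_j}=\cheb_j\of{b}$. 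Hence $f_p=\sum_{j\in\polysupp\of{p}}p_j\chebexp_{u\of{\cheb_j}}\in\chebExp_{u\of{B}}\of{F}=\widetilde{V}$, so $\varphi$ is well-defined, and its $F$-linearity is immediate from $f_p\of{i}=p\of{u\of{\cheb_i}}$ (the evaluation at a fixed point of $K$ being $F$-linear on $V$).

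For part~\ref{corollary:chebyshev-sparse-interpolation:supp(phi(f))}, injectivity of $u$ makes the bases $u\of{\cheb_j}$, $j\in\polysupp\of{p}$, pairwise distinct, while the $p_j$ are non-zero; thus the displayed expansion is precisely the representation of $f_p$ in the basis $\widetilde{B}$, and therefore
\begin{equation*}
\suppvar_{\widetilde{u}}\of{f_p}=\pset{u\of{\cheb_j}}{j\in\polysupp\of{p}}=\pset{u\of{m}}{m\in\basissupp_B\of{p}}=\suppvar_u\of{p}\text{.}
\end{equation*}
With this identity in hand, the last assertion follows at once from the transfer principle (Lemma~\ref{lemma:transfer-principle}) applied to $\varphi\colon V\to\widetilde{V}$: every Prony structure $\widetilde{P}$ on $\widetilde{V}$ — in particular the one from Theorem~\ref{theorem:chebyshev-exponential-sums} — induces the Prony structure $\varphi^\ast\of{\widetilde{P}}$ on $V$.

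I do not expect a genuine obstacle here: the argument is a faithful transcription of the monomial case, the only slightly delicate point being the chain of equalities for $f_p\of{i}$. The one thing worth flagging is that it is the commutativity relation~\eqref{equation:chebyshev-commutativity}, rather than the linearization relation~\eqref{equation:chebyshev-linearization} (which is the workhorse inside Theorem~\ref{theorem:chebyshev-exponential-sums}), that exhibits $f_p$ as a Chebyshev exponential sum; the linearization relation re-enters only indirectly, through Theorem~\ref{theorem:chebyshev-exponential-sums}, upon transferring a Prony structure back along $\varphi$.
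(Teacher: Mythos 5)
Your proposal is correct and follows the paper's own proof essentially verbatim: the same expansion of $f_p\of{i}$ via the commutativity relation~\eqref{equation:chebyshev-commutativity}, the same use of injectivity of~$u$ to identify the support, and the same appeal to the transfer principle. Your closing remark about the division of labor between the commutativity and linearization relations is also accurate.
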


\begin{proof}
\ref{corollary:chebyshev-sparse-interpolation:phi(f)-chebyshev-exponential-sum}
Let
$\polysupp\of{p}=\pset{j\in\nat}{\cheb_j\in\basissupp_B\of{p}}$.
Using the
commutativity relation~\eqref{equation:chebyshev-commutativity}
mentioned above,
for $i\in\nat$ we have
\begin{equation*}
f_p\of{i}
=
\sum_{j\in\polysupp\of{p}}p_j\mul\cheb_j\of{\cheb_i\of{b}}
=
\sum_{j\in\polysupp\of{p}}p_j\mul\cheb_i\of{\cheb_j\of{b}}
=
\sum_{j\in\polysupp\of{p}}p_j\mul\chebexp_{u\of{\cheb_j}}\of{i}\text{.}
\end{equation*}
This shows that
$f_p\in\widetilde{V}$.
In particular,
$\varphi$~is well-defined.
The linearity of~$\varphi$~follows immediately from the definition.

\ref{corollary:chebyshev-sparse-interpolation:supp(phi(f))}
Since~$u$~is injective,
the computation
in the proof of
part~\ref{corollary:chebyshev-sparse-interpolation:phi(f)-chebyshev-exponential-sum}
shows that
\begin{equation*}
\suppvar_{\widetilde{u}}\of{f_p}
=
\pset{u\of{\cheb_j}}{j\in\polysupp\of{p}}
=
\pset{u\of{T}}{T\in\basissupp_B\of{p}}
=
\suppvar_u\of{p}\text{.}
\end{equation*}
This concludes the proof.
\end{proof}

\begin{remark}%
\label{remark:chebyshev-variants}
While versions of
Theorem~\ref{theorem:chebyshev-exponential-sums}
hold
for any basis of polynomials satisfying
a linearization relation with fixed coefficients
for products
\trb{see
Corollary~\ref{corollary:spherical-harmonics}
for a variant in the relative setting of
Section~\ref{section:relative-prony-structures}},
it is in general
not easily possible to obtain corresponding versions of
Corollary~\ref{corollary:chebyshev-sparse-interpolation},
\idest~sparse interpolation techniques,
since bases satisfying commutativity relations are rather elusive
and these conditions are not straightforward to replace.
However,
there are variants for other kinds of Chebyshev bases,
see,~\eg~%
Potts-Tasche~%
\cite{PT14}
and
Imamoglu-Kaltofen-Yang~%
\cite{IKY18}.

Peter and Plonka show how to view Chebyshev polynomials of the first kind
as eigenfunctions of a suitable endomorphism of the space~$W$~of
continuous real-valued functions on the interval~$\lcrc{-1}{1}$,
see~%
\cite[Remark~\textup{4.6}]{PP13}.
Thus,
also the \qquot{analytic} reconstruction technique for these functions given in~%
\cite{PT14}
is recast in the framework for eigenfunction sums.
It is however not clear how this might be translated into a purely algebraic version.

Multivariate variants for Chebyshev polynomials of first and second kind
can be found in a very recent preprint of Hubert and Singer~%
\cite{HS20+}.
\end{remark}

\begin{example}
We give a toy example computation
to illustrate
Corollary~\ref{corollary:chebyshev-sparse-interpolation}.
Let
\begin{equation*}
f=\Y^3\in\rat\ringad{\Y}\text{.}
\end{equation*}
\trb{The polynomial~$f=1/8\mul\cheb_3+1/4\mul\cheb_1$~has Chebyshev rank~$2$.}
We choose $b\defeq2$.
Then we have
\begin{equation*}
P^\prime_2\of{f}
=
\begin{pmatrix}
1  &    8  &    343  \\
8  &  343  &  17576
\end{pmatrix}
+
\begin{pmatrix}
1  &  8  &  343  \\
8  &  1  &    8
\end{pmatrix}  \\
=
\begin{pmatrix}
 2   &   16  &    686  \\
16   &  344  &  17584
\end{pmatrix}
\end{equation*}
and
\begin{equation*}
P_2\of{f}
=
P^\prime_2\of{f}
\mul
\begin{pmatrix}
1  &  0  &  \frac{1}{2}  \\
0  &  1  &  0            \\
0  &  0  &  \frac{1}{2}
\end{pmatrix}
=
\begin{pmatrix}
 2  &   16  &   344  \\
16  &  344  &  8800
\end{pmatrix}
\sim
\begin{pmatrix}
1  &  8  &  172  \\
0  &  1  &   28
\end{pmatrix}
\text{.}
\end{equation*}
Thus,
\begin{equation*}
\ker\of{P_2\of{f}}
=
\mgen{\transpose{\rb{52,-28,1}}}
=
\mgen{\X^2-28\X+52}
=
\mgen{\rb{\X-2}\rb{\X-26}}\text{,}
\end{equation*}
and we recover the support of~$f$~as
\begin{align*}
\basissupp_B\of{f}
&=
u^{-1}\of{\suppvar_u\of{f}}
=
u^{-1}\of{\ZL\of{\ker P_2\of{f}}}
=
u^{-1}\of{\set{2,26}}
=
u^{-1}\of{\set{\cheb_1\of{b},\cheb_3\of{b}}}  \\
&=
\set{\cheb_1,\cheb_3}\text{.}
\end{align*}
If desired,
the coefficients~$1/4$~and~$1/8$~can now be easily computed
by solving a $2\times2$-system of linear equations.
\end{example}

\begin{remark}%
\label{remark:framework-inclusions}
Summarizing the preceding discussion on frameworks
for character~%
\cite{DG91}
and
eigenfunction/eigenvector sums~%
\cite{GKS91,PP13}
and the algebraic and analytic sparse polynomial interpolation techniques~\wrt~the Chebyshev basis~%
\cite{LS95,KY07}
and~%
\cite{PT14},
we obtain the following diagram
of \qquot{inclusions}.
\begin{equation*}
\begin{tikzcd}
                                                                                                          &  |[alias=prony]|  \text{Prony structures}  &                                                                           \\
|[alias=characters    ]|  \text{character sums}                                                           &                                            &  |[alias=chebyshev-algebraic]|  \text{algebraic Chebyshev interpolation}  \\
|[alias=eigenvectors  ]|  \text{eigenvector sums}                                                         &                                            &                                                                           \\
|[alias=eigenfunctions]|  \begin{array}{c}\text{eigenfunction sums}\\\text{\trb{over fields}}\end{array}  &                                            &                                                                           \\
                                                                                                          &                                            &  |[alias=chebyshev-analytic ]|  \text{analytic Chebyshev interpolation}
\ar[  "\text{Thm.~\ref{theorem:character-sums}}"                     ,  from=characters           ,  to=prony           ]
\ar[  "\text{Cor.~\ref{corollary:chebyshev-sparse-interpolation}}"'  ,  from=chebyshev-algebraic  ,  to=prony           ]
\ar[  "\text{Cor.~\ref{corollary:eigenvector-sums-multivariate}}"'   ,  from=eigenvectors         ,  to=characters      ]
\ar[  "\text{\trb{arbitrary functionals~$\varDelta$}}"'              ,  from=eigenfunctions       ,  to=eigenvectors    ]
\ar[  "\text{Rem.~\ref{remark:chebyshev-variants}}"'                 ,  from=chebyshev-analytic   ,  to=eigenfunctions  ]
\end{tikzcd}
\end{equation*}
Lakshman and Saunders
remark on the possibility to \qquot{reconcile}
the frameworks for character or eigenfunction sums
with their algorithm for sparse polynomial interpolation~\wrt~the Chebyshev basis~%
\cite[p.~\textup{388}]{LS95}.
As the framework of Prony structures is of a very general nature,
we would not propose it as a final answer to this question.
However,
it can be hoped that it will be helpful in finding more particular reconciliations.
See also
Remark~\ref{remark:chebyshev-variants}.
\end{remark}

\begin{remark}
For sparse interpolation in various bases
probabilistic results are known
in the literature
under the name
\qquot{early termination},
see for example
Kaltofen-Lee~%
\cite{KL03}.
In the language of the present note,
there the quest is to find probabilistic estimates
of the Prony index~$\Pronyindex_P\of{f}$
of a polynomial~$f$
where the Prony structure~$P$
is given in similar ways as in
Corollary~\ref{corollary:monomial-sparse-interpolation}
or
Corollary~\ref{corollary:chebyshev-sparse-interpolation}.
The general idea is to perform the interpolation method repeatedly
on increasingly large intervals
and estimate the probability of having computed the
\qquot{true}
interpolating polynomial in terms of the
number of successive intervals with the same result
and a bound for the degree of~$f$.
For more details and further refinements we refer to~%
\cite{KL03}.

Early termination strategies can also be combined with
sparse interpolation methods for rational functions.
For details we refer to,~\eg,~%
Kaltofen-Yang~%
\cite{KY07}
and
Cuyt-Lee~%
\cite{CL11}.
In a related direction,
probabilistic methods tailored to sparse polynomial interpolation
over finite fields can be found,~\eg,~in~%
Arnold-Giesbrecht-Roche~%
\cite{AGR16}.

It would be interesting to look for generalizations of these results
in the framework of Prony structures.
However,
in full generality this is unlikely to be fruitful,
since one has to be able to make additional assumptions like degree bounds
for which the Prony structures are not well-adapted.

Another potential avenue for further research could be the investigation of
the computational complexity of Prony structures~\wrt~an underlying model of computation,
such as arithmetic circuits in polynomial identity testing.
See
Shpilka-Yehudayoff~%
\cite{SY10}
and
Saxena~%
\cite{Sax09,Sax14}
for recent surveys of this field.

We leave the search for suitable settings for the future.
\end{remark}

Now let
$A\in\real^{n\times n}$
be a fixed symmetric positive definite matrix.
A variant of Prony's method for $\complex$-linear combinations of the
\emph{Gau\ss{}ians}
\begin{equation*}
\text{${\gaussian_{A,t}}\colon\real^n\to\real$,
\quad
$x\mapsto\euler^{-\transpose{\rb{x-t}}A\rb{x-t}}$,
\quad
$t\in\real^n$,}
\end{equation*}
is proposed in
Peter-Plonka-Schaback~%
\cite{PPS15}.
In the following we identify the underlying Prony structure.
To this end,
let
\begin{equation*}
\text{$B\defeq\pset{{\gaussian_{A,t}}}{t\in\real^n}$
\quad
and
\quad
$V\defeq\mgen{B}_\complex$.}
\end{equation*}
For $t\in\real^n$
set
\begin{equation*}
\text{$b_{A,t}
\defeq
\euler^z
=
\rb{\euler^{z_1},\dots,\euler^{z_n}}
\in
\rb{\real\setminus\set{0}}^n$
with
$z
=
2\transpose{t}A
\in
\real^{1\times n}$}
\end{equation*}
and let
\begin{equation*}
\text{$u\colon B\to\real^n$,
\quad
${\gaussian_{A,t}}\mapsto
b_{A,t}$.}
\end{equation*}
Since~$A$~is positive definite,
$\gaussian_{A,t}$~obtains its unique maximum in~$t$.
This implies that~$u$~is well-defined.
Also since~$A$~is positive definite,
$b_{A,t}=b_{A,s}$ for $t,s\in\real^n$ implies that $t=s$,
and thus~$u$~is injective.
For the following theorem we set
$\widetilde{V}\defeq\Exp_{\integ,u\of{B}}^n\of{\complex}$,
$\widetilde{B}\defeq\pset{\exp_b}{b\in u\of{B}}$,
and
$\widetilde{u}\colon\widetilde{B}\to K^n$,
${\exp_b}\mapsto b$.
Recall that~$\widetilde{B}$~is a basis of~$\widetilde{V}$.

\begin{theorem}%
[Prony structure for Gau\ss{}ian sums]%
\label{theorem:gaussian-sums}
For $g\in V$
let
\begin{equation*}
\text{$f_g\colon\integ^n\to\complex$,
\quad
$\alpha\mapsto g\of{\alpha}\mul\euler^{\transpose{\alpha}A\alpha}$.}
\end{equation*}
Then the following holds:
\begin{tlist}
\item%
\label{theorem:gaussian-sums:phi(g)-exponential-sum}
For all $g\in V$
we have
$f_g\in\widetilde{V}$
and
$\varphi\colon V\to\widetilde{V}$,
$g\mapsto f_g$,
is a $\complex$-vector space isomorphism
with
$\varphi\of{\gaussian_{A,t}}=\lambda_{A,t}\mul\exp_{b_{A,t}}$
for some $\lambda_{A,t}\in\real\setminus\set{0}$.
In particular,
$B$~is a basis of~$V$.
\item%
\label{theorem:gaussian-sums:supp(phi(g))}
For all $g\in V$
we have
$\suppvar_{\widetilde{u}}\of{f_g}
=
\suppvar_u\of{g}$.
\end{tlist}
Hence,
any Prony structure on~$\widetilde{V}$
\trb{in particular
the Prony structures
from
Theorem~\ref{theorem:exponential-sums}},
induces a Prony structure on~$V$
by the transfer principle
\trb{Lemma~\ref{lemma:transfer-principle}}.
\end{theorem}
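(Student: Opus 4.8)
The plan is to realize $\varphi$ as a vector space isomorphism onto the space of exponential sums with domain $\integ^n$ and then to invoke the transfer principle, exactly as in the proofs of Corollary~\ref{corollary:monomial-sparse-interpolation} and Corollary~\ref{corollary:chebyshev-sparse-interpolation}. Everything rests on one elementary computation of $f_{\gaussian_{A,t}}$ using the symmetry of $A$; the facts that $u$ is well-defined and injective (which use positive definiteness) are already recorded before the theorem.

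First I would expand the quadratic form: since $A$ is symmetric, $\transpose{\rb{\alpha-t}}A\rb{\alpha-t}=\transpose{\alpha}A\alpha-2\transpose{t}A\alpha+\transpose{t}At$ for $\alpha\in\integ^n$, $t\in\real^n$. Hence, with $z=2\transpose{t}A$ as in the statement, one gets
\begin{equation*}
f_{\gaussian_{A,t}}\of{\alpha}
=
\euler^{-\transpose{\rb{\alpha-t}}A\rb{\alpha-t}}\euler^{\transpose{\alpha}A\alpha}
=
\euler^{-\transpose{t}At}\,\euler^{2\transpose{t}A\alpha}
=
\euler^{-\transpose{t}At}\prod_{j=1}^n\rb{\euler^{z_j}}^{\alpha_j}
=
\euler^{-\transpose{t}At}\exp_{b_{A,t}}\of{\alpha}\text{.}
\end{equation*}
Thus $\varphi\of{\gaussian_{A,t}}=\lambda_{A,t}\mul\exp_{b_{A,t}}$ with $\lambda_{A,t}\defeq\euler^{-\transpose{t}At}\in\real\setminus\set{0}$ (in fact $\lambda_{A,t}>0$). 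Since $g\mapsto f_g$ is, by its very definition, the pointwise product with the fixed function $\alpha\mapsto\euler^{\transpose{\alpha}A\alpha}$, it is manifestly $\complex$-linear, and applying the displayed identity to each term of a representation $g=\sum_ic_i\gaussian_{A,t_i}$ shows $f_g=\sum_ic_i\lambda_{A,t_i}\exp_{b_{A,t_i}}\in\Exp^n_{\integ,u\of{B}}\of{\complex}=\widetilde{V}$. So $\varphi\colon V\to\widetilde{V}$ is a well-defined $\complex$-linear map, which is the first half of part~\ref{theorem:gaussian-sums:phi(g)-exponential-sum}.

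Next I would prove bijectivity and that $B$ is a basis. Surjectivity is clear because each generator $\exp_b$ of $\widetilde{V}$, with $b=b_{A,t}$, equals $\lambda_{A,t}^{-1}\mul\varphi\of{\gaussian_{A,t}}$. For injectivity, take $g=\sum_ic_i\gaussian_{A,t_i}$ with pairwise distinct $t_i$ and $\varphi\of{g}=0$; then $\sum_ic_i\lambda_{A,t_i}\exp_{b_{A,t_i}}=0$ in $\widetilde{V}$, and since $u$ is injective the base points $b_{A,t_i}$ are pairwise distinct, so linear independence of the basis $\widetilde{B}$ of $\widetilde{V}$ (Theorem~\ref{theorem:exponential-sums}) together with $\lambda_{A,t_i}\ne0$ forces all $c_i=0$, whence $g=0$. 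The same argument shows $B$ is $\complex$-linearly independent, hence an $F$-basis of $V$, and therefore $\varphi$ is a $\complex$-vector space isomorphism, completing part~\ref{theorem:gaussian-sums:phi(g)-exponential-sum}.

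Finally, for part~\ref{theorem:gaussian-sums:supp(phi(g))} I would write a general $g\in V$ in its (now unique) support representation $g=\sum_ic_i\gaussian_{A,t_i}$ with $c_i\ne0$ and distinct $t_i$; the displayed identity yields $f_g=\sum_ic_i\lambda_{A,t_i}\exp_{b_{A,t_i}}$ with nonzero coefficients $c_i\lambda_{A,t_i}$ and pairwise distinct bases, so $\suppvar_{\widetilde{u}}\of{f_g}=\set{b_{A,t_i}}=\set{u\of{\gaussian_{A,t_i}}}=\suppvar_u\of{g}$. With parts~\ref{theorem:gaussian-sums:phi(g)-exponential-sum} and~\ref{theorem:gaussian-sums:supp(phi(g))} in hand, the final assertion is precisely Lemma~\ref{lemma:transfer-principle} applied to this $\varphi$ and the Prony structures for exponential sums with domain $\integ^n$ from Theorem~\ref{theorem:exponential-sums}\,\ref{theorem:exponential-sums:toeplitz-ZZ^n} and~\ref{theorem:exponential-sums:hankel-ZZ^n}. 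There is no real obstacle here: the entire content is the quadratic-form expansion in the second step, which is where symmetry of $A$ enters, while the remaining steps are formal bookkeeping with bases.
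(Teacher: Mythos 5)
Your proposal is correct and follows essentially the same route as the paper: the same quadratic-form expansion giving $\varphi\of{\gaussian_{A,t}}=\euler^{-\transpose{t}At}\mul\exp_{b_{A,t}}$, then linearity, bijectivity via the basis $\widetilde{B}$ (the paper constructs the inverse $\psi$ explicitly where you argue injectivity plus surjectivity, which is equivalent), the same support computation, and the same appeal to the transfer principle with Theorem~\ref{theorem:exponential-sums}.
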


\begin{proof}
\ref{theorem:gaussian-sums:phi(g)-exponential-sum}
Note that
for all $t\in\real^n$ and $\alpha\in\integ^n$
and with
$\lambda_{A,t}
\defeq
\euler^{-\transpose{t}At}
\in\real\setminus\set{0}$
we have
\begin{equation*}
f_{\gaussian_{A,t}}\of{\alpha}
=
\gaussian_{A,t}\of{\alpha}\mul\euler^{\transpose{\alpha}A\alpha}
=
\euler^{-\transpose{\rb{\alpha-t}}A\rb{\alpha-t}}\mul\euler^{\transpose{\alpha}A\alpha}
=
\euler^{-\transpose{t}At}\mul\euler^{2\transpose{t}A\alpha}
=
\lambda_{A,t}\mul\exp_{b_{A,t}}\of{\alpha}\text{.}
\end{equation*}
By definition we have
$b_{A,t}
\in
u\of{B}$,
and
hence
$\varphi\of{\gaussian_{A,t}}
=
f_{\gaussian_{A,t}}
\in
\widetilde{V}$.
Since clearly
$f_{\lambda g+\mu h}=\lambda f_g+\mu f_h$
for all $\lambda,\mu\in\complex$ and $g,h\in V$,
we have that
$\varphi\of{V}\subseteq\widetilde{V}$
and~$\varphi$~is $\complex$-linear.
Since~$\widetilde{B}$~is a $\complex$-basis of~$\widetilde{V}$,
there is a unique $\complex$-linear map
$\psi\colon\widetilde{V}\to V$
with
$\psi\of{\exp_{b_{A,t}}}=1/\lambda_{A,t}\mul\gaussian_{A,t}$
for all
$t\in\real^n$.
Then~$\psi$~is the inverse of~$\varphi$
and this concludes the proof of~\ref{theorem:gaussian-sums:phi(g)-exponential-sum}.

\ref{theorem:gaussian-sums:supp(phi(g))}
Let
$g=\sum_{t\in F}\mu_t\gaussian_{A,t}$
with
finite
$F\subseteq\real^n$
and
$\mu_t\in\complex\setminus\set{0}$.
Using
part~\ref{theorem:gaussian-sums:phi(g)-exponential-sum}
we obtain
\begin{equation*}
\suppvar_{\widetilde{u}}\of{f_g}
=
\suppvar_{\widetilde{u}}\sumof{\sum_{t\in F}\mu_t\lambda_{A,t}\exp_{b_{A,t}}}
=
\pset{b_{A,t}}{t\in F}
=
\suppvar_u\of{g}\text{,}
\end{equation*}
\idest,~the assertion.
\end{proof}

Note that
an alternative approach
to the reconstruction problem in
Theorem~\ref{theorem:gaussian-sums}
which is based on Fourier transforms
is proposed in
Peter-Potts-Tasche~%
\cite{PPT11}.

\begin{remark}
There is a close relationship between Prony's method
and Sylvester's method for computing Waring decompositions of homogeneous polynomials.
Although Sylvester's method does not fit directly into our framework
of Prony structures
\trb{since it is not a method to reconstruct the support of a function},
one may still view it as an application of the Prony structure from
Example~\ref{example:classic-prony}:
Given a homogeneous polynomial
\begin{equation*}
p
=
\sum_{i=0}^dp_i\X^i\Y^{d-i}
\in
\complex\ringad{\X,\Y}\text{,}
\end{equation*}
of Waring rank at most~$r$,
then the matrix
\begin{equation*}
\Cat\of{p}
\defeq
\rb{c_{i+j}}_{\substack{i=0,\dots,r\\j=0,\dots,d-r}}
\in
\complex^{\rb{r+1}\times\rb{d-r+1}}
\end{equation*}
with
$c_i\defeq p_i/\binom{d}{i}$
induces a Prony structure
for an exponential sum
\trb{in the sense that $\ker\Cat\of{p}$ identifies the support}.
Then this exponential sum $f_p\in\Exp^1\of{\complex}$
and its reconstruction as
$f_p=\sum_{k=1}^r\mu_k\exp_{b_k}$
can be used to compute a Waring decomposition of~$p$.
Sylvester's method has recently been generalized to the multivariate case,
\confer~%
\cite{BCMT10}.
\end{remark}

\section{Relative Prony structures}%
\label{section:relative-prony-structures}

A Prony structure on a vector space~$V$~can be seen as a tool to obtain polynomials
that identify the $u$-support $\suppvar_u\of{f}\subseteq K^n$ of a given $f\in V$.
Suppose that we are given~\apriori~a~set of polynomials $I\subseteq S=K\ringad{\X_1,\dots,\X_n}$
with $\suppvar_u\of{f}\subseteq\ZL\of{I}$.
For example,
one could have $K=\real$ and $\suppvar_u\of{f}\subseteq\sphere^{n-1}=\ZL\of{1-\sum_{j=1}^n\X_j^2}$.
Prony structures as previously discussed do not take this additional information into account.
In this section we extend Prony structures in order to take advantage of this situation.

We begin by giving appropriate variants of earlier definitions for this context.

\begin{definition}
For
$Y\subseteq K^n$
let
\begin{equation*}
K\ringad{Y}
\defeq
K\ringad{\X}/{\I\of{Y}}
\end{equation*}
be the usual
\emph{coordinate algebra of~$Y$}.
For
$D\subseteq\nat^n$
let,
as before,
$\X^D
=
\pset{\X^\alpha}{\alpha\in D}$
and
\begin{equation*}
\lbar{\X^D}\defeq\pset{m+\I\of{Y}}{m\in\X^D}\subseteq K\ringad{Y}\text{.}
\end{equation*}
We denote by
\begin{equation*}
K\ringad{Y}_D
\defeq
\mgen{\lbar{\X^D}}_K
\end{equation*}
the $K$-subvector space of~$K\ringad{Y}$
generated by~$\lbar{\X^D}$.
We call~$K\ringad{Y}_D$
the
\emph{coordinate space of~$Y$~\wrt~$S_D$}.
\end{definition}

\begin{remark}
Let
$Y\subseteq K^n$
and
$D\subseteq\nat^n$.
Then we have
\begin{equation*}
K\ringad{\X}_D/{\I_D\of{Y}}\isom K\ringad{Y}_D\text{.}
\end{equation*}
Indeed,
the $K$-linear map $K\ringad{\X}_D\to K\ringad{Y}_D$
with~$\X^\alpha\mapsto\lbar{\X^\alpha}=\X^\alpha+\I\of{Y}$
for $\alpha\in D$
is an epimorphism with kernel~$\I_D\of{Y}$.
In the following we identify these two $K$-vector spaces.
\end{remark}

\begin{definition}
Let $D\subseteq\nat^n$.
For
$X\subseteq Y\subseteq K^n$
we call
\begin{equation*}
\text{${\ev{D/Y}{X}}\colon K\ringad{Y}_D\to K^X$,
\quad
$p+\I_D\of{Y}\mapsto\ev{D}{X}\of{p}=\rb{p\of{x}}_{x\in X}$,}
\end{equation*}
the
\emph{relative evaluation map at~$X$~\wrt~$S_D$~modulo~$Y$}
and
\begin{equation*}
\I_{D/Y}\of{X}
\defeq
\ker\of{\ev{D/Y}{X}}
\end{equation*}
the
\emph{relative vanishing space of~$X$~\wrt~$S_D$~modulo~$Y$}.
\end{definition}

\begin{remark}%
\label{remark:relative-evaluation-map}
Let
$X\subseteq Y\subseteq K^n$
and
$D\subseteq\nat^n$,
$X$~and~$D$~finite.
Since~$\lbar{\X^D}$~generates~$K\ringad{Y}_D$
there is a $C\subseteq D$
such that~$\lbar{\X^C}$~is a $K$-basis of~$K\ringad{Y}_D$.
Without loss of generality,
choose~$C$~such that $\card{\lbar{\X^C}}=\card{C}$.

Observe that then the transformation matrix
of~$\ev{D/Y}{X}$~\wrt~$\lbar{\X^C}$~and the canonical basis
of~$K^X$~is
the Vandermonde matrix
$\Vandermonde_C^X=\rb{x^\alpha}_{x\in X,\alpha\in C}$.
Hence the transformation matrices of the relative evaluation map~$\ev{D/Y}{X}$~and
the \qquot{ordinary} evaluation map~$\ev{C}{X}$~are identical.
\end{remark}

\begin{definition}
For
$J\subseteq K\ringad{Y}$
we call
\begin{equation*}
\ZL_Y\of{J}
\defeq
\pset{y\in Y}{\text{for all $q\in S$ with $q+\I\of{Y}\in J$, $q\of{y}=0$}}
\end{equation*}
the
\emph{relative zero locus of~$J$~\wrt~$Y$}.
\end{definition}

After these general preparations,
we define relative Prony structures,
which are the topic of this section.
Recall that an
\emph{algebraic set}
$Y\subseteq K^n$
is the zero~locus of a set of polynomials,
\idest,~$Y=\ZL\of{I}$
for some set of polynomials $I\subseteq S$.
By Hilbert's basis theorem,~$I$~can always be chosen to be finite.

\begin{definition}
Given the setup of
Definition~\ref{definition:u-support},
let
$Y\subseteq K^n$
be an algebraic set,
and suppose that
\begin{equation*}
u\of{B}
\subseteq
Y\text{.}
\end{equation*}

Let
$\FI=\rb{\FI_d}_{d\in\nat}$
be a sequence of finite sets
and
$\FH=\rb{\FH_d}_{d\in\nat}$
be a sequence of finite subsets of~$\nat^n$
such that
$\card{\lbar{\X^{\FH_d}}}=\card{\FH_d}$
and the vectors in the set~$\lbar{\X^{\FH_d}}$~are linearly independent in~$K\ringad{Y}$.

Let
$f\in V$
and
\begin{equation*}
P\of{f}=\rb{P_d\of{f}}_{d\in\nat}\in\prod_{d\in\nat}K^{\FI_d\times\FH_d}\text{.}
\end{equation*}

We call~$P\of{f}$~a
\emph{\trb{relative} Prony structure~\wrt~$Y$~for~$f$}
if for all large~$d$~one has
\begin{equation}%
\label{equation:definition:relative-prony-structure}
\text{$\ZL_Y\of{\ker P_d\of{f}}=\suppvar_u\of{f}$
\quad
and
\quad
$\I_{\FH_d/Y}\of{\suppvar_u\of{f}}\subseteq\ker\of{P_d\of{f}}$.}
\end{equation}
Here we identify
$p\in\ker P_d\of{f}\subseteq K^{\FH_d}$
with
$\sum_{\alpha\in\FH_d}p_\alpha\lbar{\X^\alpha}
\in
K\ringad{Y}_{\FH_d}
\le
K\ringad{Y}$.

The least $c\in\nat$
such that
the conditions in~\eqref{equation:definition:relative-prony-structure}
hold for all $d\ge c$
is called
\emph{\trb{relative} Prony index~\wrt~$Y$~of~$f$}
or simply
\emph{$P$-index~\wrt~$Y$~of~$f$},
denoted by
$\Pronyindex_{P,Y}\of{f}$.

If for every $f\in V$ a relative Prony structure~$P\of{f}$~\wrt~$Y$~for~$f$~is given,
then we call~$P$~a
\emph{\trb{relative} Prony structure~\wrt~$Y$~on~$V$}.
\end{definition}

\begin{remark}
Over an infinite field~$K$,
Prony structures as considered before
are precisely the relative Prony structures~\wrt~$Y=K^n$.
This follows immediately from
$K\ringad{Y}=K\ringad{\X}$.
\end{remark}

We obtain a characterization of relative Prony structures
analogous to one for ordinary Prony structures in
Theorem~\ref{theorem:prony-structure-characterization}.

\begin{theorem}%
[Relative version of
Theorem~\ref{theorem:prony-structure-characterization}]%
\label{theorem:relative-prony-structure-characterization}
Given the setup of
Definition~\ref{definition:u-support},
let
$f\in V$,
$B$~an~$F$-basis of~$V$,
$u\colon B\to K^n$ injective,
$\FI$~a sequence of finite sets,
and
$\FJ$~a sequence of finite subsets of~$\nat^n$
with
$\FJ_d\subseteq\FJ_{d+1}$
for all large~$d$
and
$\bigcup_{d\in\nat}\FJ_d=\nat^n$.
Let
$Y\subseteq K^n$
be an algebraic set
with
\begin{equation*}
\suppvar_u\of{f}\subseteq Y
\end{equation*}
and
$\FH_d\subseteq\FJ_d$
such that~$\lbar{\X^{\FH_d}}$~is
a $K$-basis of~$K\ringad{Y}_{\FJ_d}$
with $\card{\lbar{\X^{\FH_d}}}=\card{\FH_d}$.
Let
\begin{equation*}
Q\in\prod_{d\in\nat}K^{\FI_d\times\FH_d}\text{.}
\end{equation*}
Then the following are equivalent:
\begin{ifflist}
\item%
\label{theorem:relative-prony-structure-characterization:relative-prony-structure}
$Q$~is a Prony structure~\wrt~$Y$~for~$f$;
\item%
\label{theorem:relative-prony-structure-characterization:monomorphism}
For all large~$d$
there is
an injective $K$-linear map $\eta_d\colon K^{\suppvar_u\of{f}}\emb K^{\FI_d}$
such that the diagram
\begin{equation*}
\begin{tikzcd}
|[alias=KHd ]|  K^{\FH_d}            &&&  |[alias=KId  ]|  K^{\FI_d}             \\
|[alias=KYHd]|  K\ringad{Y}_{\FH_d}  &&&  |[alias=Ksupp]|  K^{\suppvar_u\of{f}}
\ar[  "Q_d"                             ,  from=KHd    ,  to=KId    ,          ,        ]
\ar[  "\isom"                           ,  from=KHd    ,  to=KYHd   ,          ,        ]
\ar[  "\ev{\FH_d/Y}{\suppvar_u\of{f}}"  ,  from=KYHd   ,  to=Ksupp  ,          ,        ]
\ar[  "\eta_d"'                         ,  from=Ksupp  ,  to=KId    ,  dashed  ,  hook  ]
\end{tikzcd}
\end{equation*}
is commutative;
\item
For all large~$d$
we have
$\ker\of{Q_d}=\I_{\FH_d/Y}\of{\suppvar_u\of{f}}$.
\end{ifflist}
\end{theorem}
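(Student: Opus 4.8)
The plan is to rerun the proof of Theorem~\ref{theorem:prony-structure-characterization} with every ordinary evaluation map replaced by its relative counterpart. Write $X\defeq\suppvar_u\of{f}$. First I would record two preliminary facts. Identifying $K^{\FH_d}$ with $K\ringad{Y}_{\FH_d}$ via $p\mapsto\sum_{\alpha\in\FH_d}p_\alpha\lbar{\X^\alpha}$ is legitimate because $\lbar{\X^{\FH_d}}$ is linearly independent in $K\ringad{Y}$ with $\card{\lbar{\X^{\FH_d}}}=\card{\FH_d}$; under this identification each $Q_d$ becomes a $K$-linear map $K\ringad{Y}_{\FH_d}\to K^{\FI_d}$ and $\ker\of{Q_d}$ a $K$-subspace of $K\ringad{Y}_{\FH_d}$. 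Since $\lbar{\X^{\FH_d}}$ is a $K$-basis of $K\ringad{Y}_{\FJ_d}$ contained in $K\ringad{Y}_{\FH_d}\subseteq K\ringad{Y}_{\FJ_d}$, we get $K\ringad{Y}_{\FH_d}=K\ringad{Y}_{\FJ_d}$, hence $\ev{\FH_d/Y}{X'}=\ev{\FJ_d/Y}{X'}$ for every finite $X'\subseteq Y$. Moreover $\ev{\FJ_d/Y}{X'}$ is the map induced by the ordinary evaluation map $\ev{\FJ_d}{X'}$ on the quotient $S_{\FJ_d}\to S_{\FJ_d}/\I_{\FJ_d}\of{Y}\isom K\ringad{Y}_{\FJ_d}$; since $\FT_{\card{X'}}\subseteq\FJ_d$ for all large~$d$ by the hypotheses on~$\FJ$, Lemma~\ref{lemma:polynomial-interpolation} yields that $\ev{\FJ_d}{X'}$, and therefore $\ev{\FH_d/Y}{X'}$, is surjective for all large~$d$. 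Note that Theorem~\ref{theorem:prony-structure-characterization} cannot be quoted directly, because~$\FH$ is in general neither ascending nor exhausting $\nat^n$; but with these two facts in hand the scheme of the proof transfers.

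To prove \claimmimplnocolon{\ref{theorem:relative-prony-structure-characterization:relative-prony-structure}}{\ref{theorem:relative-prony-structure-characterization:monomorphism}}, for all large~$d$ the vanishing space condition reads $\I_{\FH_d/Y}\of{X}=\ker\of{\ev{\FH_d/Y}{X}}\subseteq\ker\of{Q_d}$, and combined with the surjectivity of $\ev{\FH_d/Y}{X}$ this produces a $K$-linear map $\eta_d\colon K^X\to K^{\FI_d}$ with $Q_d=\eta_d\comp\ev{\FH_d/Y}{X}$. For the injectivity of $\eta_d$ I would take $v\in\ker\of{\eta_d}$, choose $\bar p\in K\ringad{Y}_{\FH_d}$ with $\ev{\FH_d/Y}{X}\of{\bar p}=v$, and note $Q_d\of{\bar p}=\eta_d\of{v}=0$, so $\bar p\in\ker\of{Q_d}$; then the zero locus condition $\ZL_Y\of{\ker Q_d}=X$ forces any polynomial representative of $\bar p$ to vanish on~$X$, whence $v=\ev{\FH_d/Y}{X}\of{\bar p}=0$. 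The implication \claimmimplnocolon{\ref{theorem:relative-prony-structure-characterization:monomorphism}}{\textup{(iii)}} is then immediate, since injectivity of $\eta_d$ gives $\ker\of{Q_d}=\ker\of{\ev{\FH_d/Y}{X}}=\I_{\FH_d/Y}\of{X}$ for all large~$d$.

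For \claimmimplnocolon{\textup{(iii)}}{\ref{theorem:relative-prony-structure-characterization:relative-prony-structure}} the vanishing space condition is trivial, so the task reduces to showing $\ZL_Y\of{\I_{\FH_d/Y}\of{X}}=X$ for all large~$d$; this is the step I expect to be the main obstacle, since it requires carefully unwinding the definition of the relative zero locus through polynomial representatives. The inclusion $X\subseteq\ZL_Y\of{\I_{\FH_d/Y}\of{X}}$ is clear because every element of $\I_{\FH_d/Y}\of{X}$ vanishes on~$X$ by definition. For the reverse, given $y\in Y\setminus X$ I would invoke surjectivity of $\ev{\FH_d/Y}{X\cup\set{y}}$ (valid for all large~$d$ by the first paragraph, as $\card{X\cup\set{y}}=\card{X}+1$) to obtain a class $\bar g\in K\ringad{Y}_{\FH_d}$ vanishing on~$X$ with $\bar g\of{y}=1$; then $\bar g\in\I_{\FH_d/Y}\of{X}$ separates~$y$ from~$X$, so $y\notin\ZL_Y\of{\I_{\FH_d/Y}\of{X}}$. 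Alternatively one can verify $\ZL_Y\of{\I_{\FH_d/Y}\of{X}}=\ZL\of{\I_{\FJ_d}\of{X}}\cap Y$ and then apply Lemma~\ref{lemma:vanishing-degree-existence}. The remaining work is bookkeeping: keeping all identifications consistent and choosing the threshold for ``all large~$d$'' large enough to simultaneously guarantee existence of~$\eta_d$, surjectivity of the relevant relative evaluation maps, and $\FT_{\card{X}+1}\subseteq\FJ_d$.
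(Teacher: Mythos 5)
Your proposal is correct and follows essentially the same route as the paper, whose entire proof is the remark that $\ev{\FH_d/Y}{\suppvar_u\of{f}}$ is surjective for all large~$d$ \trb{via Remark~\ref{remark:relative-evaluation-map}} after which one repeats the argument of Theorem~\ref{theorem:prony-structure-characterization} with relative evaluation maps. You simply make explicit the details the paper leaves to the reader, in particular the identification $K\ringad{Y}_{\FH_d}=K\ringad{Y}_{\FJ_d}$, the factorization of the relative evaluation map through the ordinary one, and the separating-function argument for $\ZL_Y\of{\I_{\FH_d/Y}\of{\suppvar_u\of{f}}}=\suppvar_u\of{f}$, all of which are sound.
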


\begin{proof}
Using
Remark~\ref{remark:relative-evaluation-map}
for the surjectivity
$\ev{\FH_d/Y}{\suppvar_u\of{f}}$
for all large~$d$,
the proof is analogous to the one of
Theorem~\ref{theorem:prony-structure-characterization}.
\end{proof}

The following theorem gives a method
to obtain a relative Prony structure from an \qquot{ordinary} one.
The relative Prony structure then uses smaller matrices.

\begin{theorem}%
\label{theorem:relative-prony-structures-construction}
Let~$P$~be a Prony structure on~$V$~as defined in
Definition~\ref{definition:prony-structure}
and let
$Y\subseteq K^n$
be an algebraic set
with
$u\of{B}\subseteq Y$.
Let
$\FH_d\subseteq\nat^n$
be such that~$\lbar{\X^{\FH_d}}$~is
a $K$-basis of~$K\ringad{Y}_{\FJ_d}\le K\ringad{Y}$
and
$\card{\FH_d}=\card{\lbar{\X^{\FH_d}}}$.
Let
\begin{equation*}
\text{$P_\FH\colon V\to\prod_{d\in\nat}K^{\FI_d\times\FH_d}$,
\quad
$f\mapsto\rb{\rb{P_\FH}_d\of{f}}_{d\in\nat}
\defeq
\rb{\restr{P_d\of{f}}{\rb{\FI_d\times\FH_d}}}_{d\in\nat}$.}
\end{equation*}
Here~$\restr{P_d\of{f}}{\rb{\FI_d\times\FH_d}}$~is obtained from~$P_d\of{f}$~by
deleting the columns that are not in~$\FH_d$.
Then~$P_\FH$~induces a Prony structure~\wrt~$Y$~on~$V$.
\end{theorem}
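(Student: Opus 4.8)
The plan is to deduce the claim from the observation that deleting columns of a matrix corresponds to intersecting its kernel with a coordinate subspace, combined with the hypothesis that $\lbar{\X^{\FH_d}}$ is a \emph{basis} of $K\ringad{Y}_{\FJ_d}$, not merely a linearly independent set. First note that the operation $P_d\of{f}\mapsto\restr{P_d\of{f}}{\rb{\FI_d\times\FH_d}}$ presupposes $\FH_d\subseteq\FJ_d$ (it deletes the columns indexed by $\FJ_d\setminus\FH_d$), which I take as tacit. Regarding $K^{\FH_d}$ as the coordinate subspace of $K^{\FJ_d}$ on the indices in $\FH_d$, one has $\rb{P_\FH}_d\of{f}=P_d\of{f}\comp\iota_d$ for the inclusion $\iota_d\colon K^{\FH_d}\emb K^{\FJ_d}$, so $\ker\rb{P_\FH}_d\of{f}=\ker P_d\of{f}\cap K^{\FH_d}$; under the monomial identification $K^{\FJ_d}\isom S_{\FJ_d}$ this reads: $\ker\rb{P_\FH}_d\of{f}$ corresponds to $\ker P_d\of{f}\cap S_{\FH_d}$.

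Next I would fix $f\in V$, write $X\defeq\suppvar_u\of{f}$, and put $c\defeq\Pronyindex_P\of{f}$, so that for all $d\ge c$ one has $\ZL\of{\ker P_d\of{f}}=X$ and $\I_{\FJ_d}\of{X}\subseteq\ker P_d\of{f}$ by Definition~\ref{definition:prony-structure}. I claim $\ker P_d\of{f}\cap S_{\FH_d}=\I_{\FH_d}\of{X}$ for $d\ge c$: the inclusion ``$\supseteq$'' holds because $\I_{\FH_d}\of{X}=\I\of{X}\cap S_{\FH_d}\subseteq\I\of{X}\cap S_{\FJ_d}=\I_{\FJ_d}\of{X}\subseteq\ker P_d\of{f}$ and $\I_{\FH_d}\of{X}\subseteq S_{\FH_d}$; ``$\subseteq$'' holds because any $p$ in the left side lies in $\ker P_d\of{f}$, hence vanishes on $\ZL\of{\ker P_d\of{f}}=X$, so $p\in\I\of{X}\cap S_{\FH_d}=\I_{\FH_d}\of{X}$. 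By the basis hypothesis the quotient map $S\to K\ringad{Y}$ restricts to an isomorphism $S_{\FH_d}\isom K\ringad{Y}_{\FH_d}$, which carries the identification of $\ker\rb{P_\FH}_d\of{f}$ with $\ker P_d\of{f}\cap S_{\FH_d}$ into the identification used in the definition of a relative Prony structure and sends $\I_{\FH_d}\of{X}$ onto $\I_{\FH_d/Y}\of{X}$. Hence $\ker\rb{P_\FH}_d\of{f}=\I_{\FH_d/Y}\of{X}$ for $d\ge c$, which already gives the relative vanishing space condition $\I_{\FH_d/Y}\of{X}\subseteq\ker\rb{P_\FH}_d\of{f}$.

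It remains to verify $\ZL_Y\of{\ker\rb{P_\FH}_d\of{f}}=\ZL_Y\of{\I_{\FH_d/Y}\of{X}}=X$ for $d\ge c$. The inclusion ``$\supseteq$'' is routine, since $X\subseteq u\of{B}\subseteq Y$ and any representative in $S$ of an element of $\I_{\FH_d/Y}\of{X}$ differs from a polynomial of $\I_{\FH_d}\of{X}$ by an element of $\I\of{Y}$, hence vanishes on $X$. For ``$\subseteq$'' — the heart of the argument — let $y\in\ZL_Y\of{\I_{\FH_d/Y}\of{X}}$, so $y\in Y$ and $q\of{y}=0$ whenever $q\in S$ satisfies $q+\I\of{Y}\in\I_{\FH_d/Y}\of{X}$. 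Given any $p\in\ker P_d\of{f}$, viewed as a polynomial in $S_{\FJ_d}$, its class $p+\I\of{Y}$ lies in $K\ringad{Y}_{\FJ_d}$, and since $\lbar{\X^{\FH_d}}$ is a $K$-basis of $K\ringad{Y}_{\FJ_d}$ we may write $p+\I\of{Y}=\sum_{\alpha\in\FH_d}c_\alpha\lbar{\X^\alpha}$; with $\hat p\defeq\sum_{\alpha\in\FH_d}c_\alpha\X^\alpha\in S_{\FH_d}$ this gives $\hat p-p\in\I\of{Y}$. As $p$ vanishes on $X=\ZL\of{\ker P_d\of{f}}$ and $X\subseteq Y$, so does $\hat p$, whence $\hat p\in\I_{\FH_d}\of{X}$ and therefore $p+\I\of{Y}=\hat p+\I\of{Y}\in\I_{\FH_d/Y}\of{X}$; taking $q\defeq p$ yields $p\of{y}=0$. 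Since $p\in\ker P_d\of{f}$ was arbitrary, $y\in\ZL\of{\ker P_d\of{f}}=X$, as needed.

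I expect this last inclusion to be the main obstacle: after deleting columns only the equations indexed by $\FH_d$ remain, so one must recover the information carried by the discarded columns, which is precisely what the \emph{spanning} half of the basis hypothesis on $\lbar{\X^{\FH_d}}$ supplies — it lets each $p\in\ker P_d\of{f}$ be replaced by a polynomial supported on $\FH_d$ representing the same class modulo $\I\of{Y}$. Everything else is bookkeeping with the identifications $K^{\FJ_d}\isom S_{\FJ_d}$ and $S_{\FH_d}\isom K\ringad{Y}_{\FH_d}$ and with the definitions of $\I_{\FH_d/Y}$ and $\ZL_Y$. Once $\ker\rb{P_\FH}_d\of{f}=\I_{\FH_d/Y}\of{X}$ is in hand one could alternatively conclude via Theorem~\ref{theorem:relative-prony-structure-characterization}, at the cost of also imposing its hypotheses on the sequence $\FJ$; the direct verification above needs none of these.
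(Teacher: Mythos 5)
Your proof is correct, and it takes a genuinely different route from the paper's. The paper argues entirely through its two characterization theorems: it invokes Theorem~\ref{theorem:prony-structure-characterization} to factor $P_d\of{f}=\eta_d\comp\ev{\FJ_d}{\suppvar_u\of{f}}$ with $\eta_d$ injective, observes that deleting columns replaces the evaluation map by $\ev{\FH_d}{\suppvar_u\of{f}}$, identifies the resulting matrix with that of the relative evaluation map $\ev{\FH_d/Y}{\suppvar_u\of{f}}$ via Remark~\ref{remark:relative-evaluation-map}, and concludes by the implication \claimmimplnocolon{\ref{theorem:relative-prony-structure-characterization:monomorphism}}{\ref{theorem:relative-prony-structure-characterization:relative-prony-structure}} of Theorem~\ref{theorem:relative-prony-structure-characterization}. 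You instead verify the two defining conditions of a relative Prony structure directly: the identity $\ker\rb{P_\FH}_d\of{f}=\ker P_d\of{f}\cap S_{\FH_d}=\I_{\FH_d}\of{X}\isom\I_{\FH_d/Y}\of{X}$ follows from the two halves of Definition~\ref{definition:prony-structure} alone, and the inclusion $\ZL_Y\of{\ker\rb{P_\FH}_d\of{f}}\subseteq X$ is recovered by using the spanning half of the basis hypothesis to replace an arbitrary $p\in\ker P_d\of{f}$ by a representative supported on $\FH_d$. What your route buys: it bypasses both characterization theorems, and in particular it does not need the surjectivity of $\ev{\FJ_d}{\suppvar_u\of{f}}$ for large $d$, which in the paper's argument is supplied by the hypotheses $\FJ_d\subseteq\FJ_{d+1}$ and $\bigcup_d\FJ_d=\nat^n$ of Theorem~\ref{theorem:prony-structure-characterization} --- hypotheses not explicitly stated in Theorem~\ref{theorem:relative-prony-structures-construction} itself, so your proof is honest about requiring less. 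What the paper's route buys: the explicit factorization $\rb{P_\FH}_d\of{f}=E_d\mul\Vandermonde_{\FH_d}^{\suppvar_u\of{f}}$ is reused immediately afterwards \trb{Remark~\ref{remark:exponential-sums-relative}\,\ref{remark:exponential-sums-relative:row-deletion}} to justify also deleting rows, which your kernel computation does not directly provide. Your observation that $\FH_d\subseteq\FJ_d$ is tacitly assumed by the column-deletion operation is also accurate.
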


\begin{proof}
Let $f\in V$.
By
Theorem~\ref{theorem:prony-structure-characterization},
for all large~$d$
there are injective $K$-linear maps
$\eta_d\colon K^{\suppvar_u\of{f}}\emb K^{\FI_d}$
such that
the linear map
$S_{\FJ_d}\to K^{\FI_d}$
induced by~$P_d\of{f}$~equals
$\eta_d\comp{\ev{\FJ_d}{\suppvar_u\of{f}}}$.

It is easy to see that then
the linear map
$S_{\FH_d}\to K^{\FI_d}$
induced by
$\rb{P_\FH}_d\of{f}$
equals
$\eta_d\comp{\ev{\FH_d}{\suppvar_u\of{f}}}$
\trb{for all large~$d$}.
Recall that the matrix
of
$\ev{\FH_d}{\suppvar_u\of{f}}$
equals
$\Vandermonde_{\FH_d}^{\suppvar_u\of{f}}$.
Thus,
we have
$\rb{P_\FH}_d\of{f}
=
E_d\mul\Vandermonde_{\FH_d}^{\suppvar_u\of{f}}$
where~$E_d$~denotes the matrix of~$\eta_d$.
Hence,
by
Remark~\ref{remark:relative-evaluation-map}
we have that the linear map
$K\ringad{Y}_{\FH_d}\to K^{\FI_d}$
induced by~$\rb{P_\FH}_d\of{f}$
equals
$\eta_d\comp{\ev{\FH_d/Y}{\suppvar_u\of{f}}}$.
By the direction~\qquot{\claimmimplnocolon{\ref{theorem:relative-prony-structure-characterization:monomorphism}}{\ref{theorem:relative-prony-structure-characterization:relative-prony-structure}}}
of
Theorem~\ref{theorem:relative-prony-structure-characterization}
we are done.
\end{proof}

\begin{corollary}%
[Relative version of
Theorem~\ref{theorem:exponential-sums}\,\ref{theorem:exponential-sums:hankel-NN^n}]%
\label{corollary:exponential-sums-relative}
Let
$V\defeq\Exp^n_Y\of{F}$
with an algebraic set $Y\subseteq K^n$.
For appropriately chosen sequences~$\FI$~and~$\FJ$,
\begin{equation*}
\Hankel_{\FI,\FJ,d}\of{f}=\rb{f\of{\alpha+\beta}}_{\substack{\alpha\in\FI_d\\\beta\in\FJ_d}}
\in
K^{\FI_d\times\FJ_d}
\end{equation*}
induces a Prony structure on~$V$
according to
Theorem~\ref{theorem:exponential-sums}\,\ref{theorem:exponential-sums:hankel-NN^n}.
Let $\FH_d\subseteq\FJ_d$
be such that~$\lbar{\X^{\FH_d}}$~is a $K$-basis of~$K\ringad{Y}_{\FJ_d}$.
Then
\begin{equation*}
\Hankel_{\FI,\FH,d}\of{f}
=
\rb{f\of{\alpha+\beta}}_{\substack{\alpha\in\FI_d\\\beta\in\FH_d}}
\in
K^{\FI_d\times\FH_d}
\end{equation*}
induces a Prony structure~\wrt~$Y$~on~$V$.
\end{corollary}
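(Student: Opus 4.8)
The plan is to obtain this as a direct consequence of Theorem~\ref{theorem:relative-prony-structures-construction}, taking as input the Hankel Prony structure provided by Theorem~\ref{theorem:exponential-sums}\,\ref{theorem:exponential-sums:hankel-NN^n}.

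First I would fix sequences $\FI$ and $\FJ$ as in the hypotheses of Theorem~\ref{theorem:exponential-sums}, that is, with $\FJ_d\subseteq\FJ_{d+1}$ for all large~$d$, $\bigcup_{d\in\nat}\FJ_d=\nat^n$, and $\FI_d=\FJ_{\ell\of{d}}$ for an unbounded monotonous $\ell\colon\nat\to\nat$. By Theorem~\ref{theorem:exponential-sums}\,\ref{theorem:exponential-sums:hankel-NN^n}, the assignment $P\colon f\mapsto\rb{\Hankel_{\FI,\FJ,d}\of{f}}_{d\in\nat}$ is then a Prony structure on $V=\Exp^n_Y\of{F}$; in particular $B_Y$ is an $F$-basis of $V$, so the setup of Definition~\ref{definition:u-support} is in force. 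Moreover, $u_Y\of{B_Y}\subseteq Y$ holds trivially since $u_Y\of{\exp_b}=b$ and $B_Y=\pset{\exp_b}{b\in Y}$, so the standing hypothesis $u\of{B}\subseteq Y$ of the relative setting is satisfied.

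Next I would verify that the sequence $\FH=\rb{\FH_d}_{d\in\nat}$ meets the requirements imposed on it in Theorem~\ref{theorem:relative-prony-structures-construction}: each $\FH_d$ is a finite subset of $\nat^n$, being contained in $\FJ_d$, and $\lbar{\X^{\FH_d}}$ is by assumption a $K$-basis of $K\ringad{Y}_{\FJ_d}\le K\ringad{Y}$; since a basis is in particular a set of pairwise distinct, linearly independent vectors, we get $\card{\FH_d}=\card{\lbar{\X^{\FH_d}}}$. Applying Theorem~\ref{theorem:relative-prony-structures-construction} to this $P$ and this $\FH$ then yields that $P_\FH$, with $\rb{P_\FH}_d\of{f}=\restr{P_d\of{f}}{\rb{\FI_d\times\FH_d}}$, induces a Prony structure~\wrt~$Y$ on $V$.

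It remains only to unwind notation: deleting from $\Hankel_{\FI,\FJ,d}\of{f}=\rb{f\of{\alpha+\beta}}_{\alpha\in\FI_d,\beta\in\FJ_d}$ the columns indexed by $\FJ_d\setminus\FH_d$ leaves exactly $\rb{f\of{\alpha+\beta}}_{\alpha\in\FI_d,\beta\in\FH_d}=\Hankel_{\FI,\FH,d}\of{f}$, so $P_\FH=\rb{\Hankel_{\FI,\FH,d}}_{d\in\nat}$ and the corollary follows. I do not anticipate a genuine obstacle here; the only points demanding a moment's attention are the observation that a column restriction of a Hankel-type matrix is again a matrix of the same form (now with columns indexed by the smaller set $\FH_d$) and the check that the bookkeeping conditions on $\FH$ in the definition of a relative Prony structure all follow from $\lbar{\X^{\FH_d}}$ being a $K$-basis.
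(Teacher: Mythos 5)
Your proposal is correct and follows exactly the paper's route: the paper proves this corollary by the one-line observation that it is immediate from applying Theorem~\ref{theorem:relative-prony-structures-construction} to the Hankel Prony structure of Theorem~\ref{theorem:exponential-sums}\,\ref{theorem:exponential-sums:hankel-NN^n}. Your additional checks --- that $u_Y\of{B_Y}\subseteq Y$, that $\card{\FH_d}=\card{\lbar{\X^{\FH_d}}}$, and that column deletion yields precisely $\Hankel_{\FI,\FH,d}\of{f}$ --- are exactly the details the paper leaves implicit.
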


\begin{proof}
This is immediate by applying
Theorem~\ref{theorem:relative-prony-structures-construction}
to
Theorem~\ref{theorem:exponential-sums}\,\ref{theorem:exponential-sums:hankel-NN^n}.
\end{proof}

\begin{remark}%
\label{remark:exponential-sums-relative}
~
\begin{tlist}
\item
An analogous result to
Corollary~\ref{corollary:exponential-sums-relative}
holds for the Toeplitz Prony structure
on~$V=\Exp_Y^n\of{F}$
for an \trb{algebraic} set $Y\subseteq\rb{K\setminus\set{0}}^n$.
\item%
\label{remark:exponential-sums-relative:row-deletion}
For
$V=\Exp^n_Y\of{F}$
as in
Corollary~\ref{corollary:exponential-sums-relative}
a more efficient result is possible
as follows.

As a matrix,
$\Hankel_{\FI,\FH,d}\of{f}$~is obtained by \qquot{deleting columns} from~$\Hankel_{\FI,\FJ,d}\of{f}$.
By the proof of
Theorem~\ref{theorem:exponential-sums}\,\ref{theorem:exponential-sums:hankel-NN^n},
the linear map
$S_{\FH_d}\to K^{\FI_d}$
induced by
$\Hankel_{\FI,\FH,d}\of{f}$
equals
$\eta_{\FI,d}\comp{\ev{\FH_d}{\suppvar_u\of{f}}}$
with
$\eta_{\FI,d}\defeq\transpose{\rb{\ev{\FI_d}{\suppvar_u\of{f}}}}\comp C_f$.
Thus,
we may also pass to
$\eta_{\FH,d}\defeq\transpose{\rb{\ev{\FH_d}{\suppvar_u\of{f}}}}\comp C_f$,
since also~$\eta_{\FH,d}$~is injective for all large~$d$.
Hence also
$\Hankel_{\FH,\FH,d}\of{f}=\rb{f\of{\alpha+\beta}}_{\alpha,\beta\in\FH_d}$
induces a Prony structure~\wrt~$Y$~on $V=\Exp^n_Y\of{F}$.
\end{tlist}
\end{remark}

While
Theorem~\ref{theorem:relative-prony-structures-construction}
yields a general recipe to construct relative Prony structures
from \qquot{ordinary} ones,
in concrete situations it can be possible to achieve better results.
We end the section with one such example,
recasting the main result of~%
\cite{KMv19}
in the context of relative Prony structures.
Let
$K=\real$,
$S=\real\ringad{\X_1,\dots,\X_n}$,
and
\begin{equation*}
Y
\defeq
\sphere^{n-1}
=
\ZL\sumof{1-\sum_{j=1}^n\X_j^2}
=
\pset{x\in\real^n}{\norm{x}_2=1}
\subseteq
\real^n\text{.}
\end{equation*}
Consider the $\real$-vector space
\begin{equation*}
\sphereharmon_{\le d}
\defeq
\real\ringad{\sphere^{n-1}}_{\le d}
=
S_{\le d}/{\I_{\le d}\of{\sphere^{n-1}}}
\isom
\pset{\restr{p}{\sphere^{n-1}}}{p\in S_{\le d}}\text{.}
\end{equation*}

Let
${\laplace}\colon S\to S$,
$p\mapsto\sum_{j=1}^n\partial_j^2\of{p}$,
denote the
\emph{Laplace operator}.
The elements of
$\ker\of{\laplace}$ are called
\emph{harmonic}.

Let~$\harmonhomog_k$~be the $\real$-vector space
generated by the restrictions~$\restr{p}{\sphere^{n-1}}$~of
harmonic homogeneous polynomials $p\in S_k$
of degree~$k$~to the sphere,
usually called
the space of
\emph{spherical harmonics}.
Using
Gallier-Quaintance~%
\cite[Theorem~\textup{7.13},
discussion after
Definition~\textup{7.15}]{GQ20}
it is easy to see that one has the decomposition
\trb{as vector spaces}
\begin{equation*}
\sphereharmon_{\le d}
\isom
\bigdirsum_{k=0}^d\harmonhomog_k\text{.}
\end{equation*}
For $k=0,\dots,d$,
let
$H_k=\rb{y_k^1,\dots,y_k^{d_k}}$
be an $\real$-basis of~$\harmonhomog_k$.
Hence
$H_{\le d}\defeq\bigcup_{k=0}^dH_k$
is a basis of~$\sphereharmon_{\le d}$.
For $x\in\sphere^{n-1}$
let
\begin{equation*}
\text{$h_x\colon\pset{\rb{k,\ell}}{\text{$k\in\nat$, $\ell=1,\dots,d_k$}}\to\real$,
\quad
$\rb{k,\ell}\mapsto y_k^\ell\of{x}$.}
\end{equation*}
For finite $X\subseteq\sphere^{n-1}$
let~$W_{\le d}^X$~be the matrix of
${\ev{\FT_d/\sphere^{n-1}}{X}}$~\wrt~$H_{\le d}$~and the basis of~$\real^X$.

\begin{corollary}%
[Relative Prony structure for spherical harmonic sums]%
\label{corollary:spherical-harmonics}
Let
$B\defeq\pset{h_x}{x\in\sphere^{n-1}}$,
$V\defeq\mgen{B}_\real$,
and
$u\colon B\to\real^n$,
$h_x\mapsto x$.
For
$f\in V$,
$f=\sum_{x\in\suppvar_u\of{f}}f_xh_x$,
$f_x\in\real\setminus\set{0}$,
let
$C_f=\rb{f_x\unit_x}_{x\in X}$
and
\begin{equation*}
\widetilde{\Hankel}_d\of{f}
=
\transpose{\rb{W^{\suppvar_u\of{f}}_{\le d}}}\mul C_f\mul W^{\suppvar_u\of{f}}_{\le d}\text{.}
\end{equation*}

Then the function
\begin{equation*}
\text{$\widetilde{\Hankel}\colon V\to\prod_{d\in\nat}\real^{H_{\le d}\times H_{\le d}}$,
\quad
$f\mapsto\rb{\widetilde{\Hankel}_d\of{f}}_{d\in\nat}$,}
\end{equation*}
induces a relative Prony structure~\wrt~$\sphere^{n-1}$~on~$V$.
\end{corollary}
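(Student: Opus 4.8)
The plan is to invoke the relative characterization in Theorem~\ref{theorem:relative-prony-structure-characterization}, following the pattern of the proofs of Theorem~\ref{theorem:character-sums} and Theorem~\ref{theorem:exponential-sums}\,\ref{theorem:exponential-sums:hankel-NN^n}: one exhibits $\widetilde{\Hankel}_d\of{f}$ as the composition of a relative evaluation map with an injective map and then reads off the assertion. First I would dispose of the bookkeeping: $u$~is injective by its definition, and $\suppvar_u\of{f}\subseteq\sphere^{n-1}$ for every $f\in V$ because $u\of{B}\subseteq\sphere^{n-1}$. The quoted decomposition $\sphereharmon_{\le d}\isom\bigdirsum_{k=0}^d\harmonhomog_k$ together with the chosen bases~$H_k$~shows that~$H_{\le d}$~is an $\real$-basis of $\sphereharmon_{\le d}=\real\ringad{\sphere^{n-1}}_{\le d}$, so that~$W_{\le d}^X$~is precisely the matrix of the relative evaluation map $\ev{\FT_d/\sphere^{n-1}}{X}\colon\sphereharmon_{\le d}\to\real^X$ with respect to~$H_{\le d}$~and the canonical basis of~$\real^X$, for any finite $X\subseteq\sphere^{n-1}$.

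The one substantive input is surjectivity of $\ev{\FT_d/\sphere^{n-1}}{X}$ for all large~$d$. For $X\subseteq\sphere^{n-1}$ the ordinary evaluation map factors as ${\ev{\le d}{X}}={\ev{\FT_d/\sphere^{n-1}}{X}}\comp\pi_d$, where $\pi_d\colon S_{\le d}\to\sphereharmon_{\le d}$ is the canonical surjection; since ${\ev{\le d}{X}}$ is surjective for $d\ge\card{X}-1$ by Lemma~\ref{lemma:polynomial-interpolation}, so is ${\ev{\FT_d/\sphere^{n-1}}{X}}$. Equivalently, $W_{\le d}^X$ has full row rank, hence $\transpose{\rb{W_{\le d}^X}}$ is injective, for all large~$d$.

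Now I would fix $f\in V$ together with a representation $f=\sum_{x\in X}f_xh_x$ with $X\defeq\suppvar_u\of{f}$ finite and $f_x\in\real\setminus\set{0}$, working at the level of generating systems so as to invoke Remark~\ref{remark:prony-structure-for-generating-systems} (which also yields as a byproduct that~$B$~is an $\real$-basis of~$V$). Put $\eta_d\defeq\transpose{\rb{W_{\le d}^X}}\mul C_f\colon\real^X\to\real^{H_{\le d}}$. By its very definition $\widetilde{\Hankel}_d\of{f}=\transpose{\rb{W_{\le d}^X}}\mul C_f\mul W_{\le d}^X$ is the matrix of $\eta_d\comp{\ev{\FT_d/\sphere^{n-1}}{X}}$, so the diagram in condition~\ref{theorem:relative-prony-structure-characterization:monomorphism} of Theorem~\ref{theorem:relative-prony-structure-characterization} commutes once we identify, via~$H_{\le d}$, the column space of $\widetilde{\Hankel}_d\of{f}$ with $\sphereharmon_{\le d}$ (the coordinate space that there serves as $K\ringad{Y}_{\FH_d}$, with $\FJ=\FT$). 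Since $C_f$ is invertible and $\transpose{\rb{W_{\le d}^X}}$ is injective for all large~$d$, the map~$\eta_d$~is injective for all large~$d$. Thus condition~\ref{theorem:relative-prony-structure-characterization:monomorphism} holds for~$f$~and this representation; Theorem~\ref{theorem:relative-prony-structure-characterization} together with Remark~\ref{remark:prony-structure-for-generating-systems} then gives that~$B$~is an $\real$-basis of~$V$~and that $\widetilde{\Hankel}$ induces a relative Prony structure~\wrt~$\sphere^{n-1}$~on~$V$.

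The only genuine obstacle I anticipate is matching conventions: Theorem~\ref{theorem:relative-prony-structure-characterization} is stated using the monomial image basis~$\lbar{\X^{\FH_d}}$~of the coordinate space, whereas here the natural basis is the harmonic basis~$H_{\le d}$. One must observe that the characterization---and in particular the surjectivity argument above and Remark~\ref{remark:relative-evaluation-map}---is insensitive to the choice of $K$-basis of the coordinate space; concretely, one absorbs the invertible change-of-basis matrix between~$\lbar{\X^{\FH_d}}$~and~$H_{\le d}$~into~$Q_d$~and~$\eta_d$. With that understood, everything is a routine transcription of the exponential-sum argument to the relative spherical setting.
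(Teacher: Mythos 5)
Your argument is correct, but it takes a different route from the paper: the paper's entire proof is a citation to Kunis--M\"oller--von~der~Ohe \cite[Section~3.3, Theorem~3.14]{KMv19}, where the kernel of the spherical Hankel matrix is analyzed directly, whereas you derive the corollary internally from Theorem~\ref{theorem:relative-prony-structure-characterization} by exhibiting $\widetilde{\Hankel}_d\of{f}$ as $\eta_d\comp\ev{\FT_d/\sphere^{n-1}}{\suppvar_u\of{f}}$ with $\eta_d=\transpose{\rb{W_{\le d}^{\suppvar_u\of{f}}}}\mul C_f$ injective for large~$d$. Your surjectivity step \trb{factoring $\ev{\le d}{X}$ through the canonical surjection onto $\sphereharmon_{\le d}$ and invoking Lemma~\ref{lemma:polynomial-interpolation}} and the injectivity of the transposed Vandermonde-like factor are both sound, and your closing remark about absorbing the change of basis between $\lbar{\X^{\FH_d}}$ and $H_{\le d}$ correctly addresses the one real mismatch with the formal setup of the relative definition, which is phrased for monomial-image bases. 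What your approach buys is a self-contained proof within the paper's axiomatics that makes visible exactly which hypotheses of the relative characterization are being used; what the paper's citation buys is the additional quantitative content of \cite{KMv19} \trb{in particular that $\widetilde{\Hankel}_d\of{f}$ is computable from $\bigTheta\of{d^{n-1}}$ evaluations via the linearization of products of spherical harmonics}, which your argument does not and need not reproduce since it is not part of the formal Prony-structure conditions. Two small points you gloss over: injectivity of~$u$ requires observing that the spherical harmonics separate points of $\sphere^{n-1}$ \trb{so that $h_x=h_y$ forces $x=y$}, and the appeal to Remark~\ref{remark:prony-structure-for-generating-systems} is to its \trb{unstated but routine} relative analogue; neither is a gap of substance.
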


\begin{proof}
This follows from
Kunis-M\"oller-von~der~Ohe~%
\cite[Section~\textup{3.3},
Theorem~\textup{3.14}]{KMv19}.
\end{proof}

\begin{remark}
Observe that by~%
\cite[Theorem~\textup{3.14}]{KMv19},
the matrix~$\widetilde{\Hankel}_d\of{f}$~can be computed solely from
$\bigTheta\of{d^{n-1}}$
evaluations of~$f$.
One may also use
Corollary~\ref{corollary:exponential-sums-relative}
or even
Remark~\ref{remark:exponential-sums-relative}\,\ref{remark:exponential-sums-relative:row-deletion}
to get a Prony structure~\wrt~$\sphere^{n-1}$
on~$\sphereharmon_{\le d}$.
The matrices so obtained have the same number of columns
or the same size
as the
ones in
Corollary~\ref{corollary:spherical-harmonics},
respectively.
But then the number~$\card{\FH_d+\FH_d}$~of used evaluations
is not in general in~$\bigTheta\of{d^{n-1}}$.
\end{remark}

\section{Maps between Prony structures}%
\label{section:prony-maps}

In
Section~\ref{section:applications}
we witnessed instances of Prony structures transferring
from one vector space to another,
such as from spaces of exponential sums to spaces of polynomials
or Gau\ss{}ian sums
with their respective bases.
We take these observations as motivation
to consider structure preserving maps between Prony structures.
For notational simplicity,
whenever we say that~$P$~is a Prony structure,
we mean that~$P$~is a Prony structure on
an $F$-vector space~$V$~with basis~$B$~\wrt~an injection $u\colon B\to K^n$.
Similarly,
when~$P^\prime$~is a Prony structure,
then this means that~$P^\prime$~is a Prony structure on
an $F^\prime$-vector space~$V^\prime$~with basis~$B^\prime$~\wrt~an injection $u^\prime\colon B^\prime\to\rb{K^\prime}^{n^\prime}$.

The following is natural definition of structures preserving maps between Prony structures.

\begin{definition}
Let~$P$~and~$P^\prime$~be Prony structures on~$V$~and~$V^\prime$,~respectively.
Let
\begin{itemize}
\item
$\iota\colon F\to F^\prime$
be
a field homomorphism
\trb{turning~$V^\prime$ into an $F$-vector space},
\item
$\varphi\colon V\to V^\prime$
be
an $F$-vector space homomorphism,
and
\item
$\mu\colon P\of{V}\to P^\prime\of{V^\prime}$
be
a function,
where
$P\of{V}=\pset{\rb{P_d\of{f}}_{d\in\nat}}{f\in V}$.
\end{itemize}
Then
$\psi\defeq\rb{\iota,\varphi,\mu}$
is called
\emph{map of Prony structures from~$P$ to~$P^\prime$},
abbreviated as
\emph{Prony map}
in the following,
written $\psi\colon P\to P^\prime$,
if
the inclusion
\begin{equation*}
\varphi\of{B}
\subseteq
B^\prime
\end{equation*}
holds
and
the following diagram is commutative:
\begin{equation*}
\begin{tikzcd}
|[alias=V ]|  V         &  |[alias=PV  ]|  P\of{V}                \\
|[alias=V']|  V^\prime  &  |[alias=P'V']|  P^\prime\of{V^\prime}
\ar[  "P"         ,  from=V   ,  to=PV    ]
\ar[  "P^\prime"  ,  from=V'  ,  to=P'V'  ]
\ar[  "\mu"       ,  from=PV  ,  to=P'V'  ]
\ar[  "\varphi"   ,  from=V   ,  to=V'    ]
\end{tikzcd}
\end{equation*}
\end{definition}

\begin{remark}
Our notation should not be confused with a similar definition in
Batenkov-Yomdin~%
\cite{BY14}
where certain moment maps are considered.

One might expect a map
between~$K^n$ and~$\rb{K^\prime}^{n^\prime}$
in the definition of Prony map
\trb{that is compatible with the other data}.
However,
if~$P$~and~$P^\prime$~are Prony structures
and
$\psi=\rb{\iota,\varphi,\mu}\colon P\to P^\prime$
is a Prony map
then,
since~$u$~is injective,
there is
always
a function
\begin{equation*}
\text{$\varrho_\psi\colon u\of{B}\to u^\prime\of{B^\prime}$,
\quad
$\ell\mapsto\rb{u^\prime\comp\varphi}\of{u^{-1}\of{\ell}}$,}
\end{equation*}
that maps
elements of
$u\of{B}\subseteq K^n$
to elements of
$u^\prime\of{B^\prime}\subseteq\rb{K^\prime}^{n^\prime}$.
In other words,
the following diagram is commutative:
\begin{equation*}
\begin{tikzcd}
|[alias=B ]|  B        &  |[alias=B'  ]|  B^\prime               \\
|[alias=uB]|  u\of{B}  &  |[alias=u'B']|  u^\prime\of{B^\prime}
\ar[  "\varphi"       ,  from=B   ,  to=B'    ,        ,             ]
\ar[  "\varrho_\psi"  ,  from=uB  ,  to=u'B'  ,        ,             ]
\ar[  "u"             ,  from=B   ,  to=uB    ,  hook  ,  two heads  ]
\ar[  "u^\prime"      ,  from=B'  ,  to=u'B'  ,  hook  ,  two heads  ]
\end{tikzcd}
\end{equation*}
Clearly,
$\varrho_\psi$~is injective if and only if~$\varphi$~is injective.
\end{remark}

\begin{remark}
Let
$\catP=\rb{\calO,{\Hom},{\id},{\comp}}$
be defined as follows.
\begin{itemize}
\item
$\calO\defeq\pset{P}{\text{$P$ Prony structure}}$
is the class of all Prony structures.
\item
For $P,P^\prime\in\calO$,
$\Hom\of{P,P^\prime}\defeq\pset{\psi}{\text{$\psi\colon P\to P^\prime$ Prony map}}$
is the set of all Prony maps from~$P$~to~$P^\prime$.
\item
For
$P\in\calO$,
let
\begin{equation*}
{\id_P}
\defeq
\rb{{\id_F},{\id_V},{\id_{P\of{V}}}}\text{.}
\end{equation*}
\item
For
$P,P^\prime,P^{\prime\prime}\in\calO$,
$\psi
=
\rb{\iota,\varphi,\mu}\in\Hom\of{P,P^\prime}$,
and
$\psi^\prime
=
\rb{\iota^\prime,\varphi^\prime,\mu^\prime}\in\Hom\of{P^\prime,P^{\prime\prime}}$,
let
\begin{equation*}
\psi^\prime\comp\psi
\defeq
\rb{\rb{\iota^\prime\comp\iota},\rb{\varphi^\prime\comp\varphi},\rb{\mu^\prime\comp\mu}}\text{.}
\end{equation*}
\end{itemize}
It is straightforward to show
that~$\catP$~is a category
\trb{\confer,~\eg,~%
\cite{Mac98,AHS05}}.
We call~$\catP$~the
\emph{category of Prony structures}.
It would be interesting to get insights from this point of view.
\end{remark}

\begin{example}%
[Sparse polynomial interpolation]
Let the notation and assumptions be as in
Corollary~\ref{corollary:monomial-sparse-interpolation},
and moreover let
$\iota
\defeq
{\id_F}$
be the identity map on~$F$.
Note that
\begin{equation*}
Q_P\of{V}
=
\pset{Q_P\of{p}}{p\in V}
=
\pset{P\of{f_p}}{p\in V}
\subseteq
P\of{\widetilde{V}}\text{.}
\end{equation*}
So we choose
$\mu\colon Q_P\of{V}\emb P\of{\widetilde{V}}$
to be the inclusion map.
Then
$\psi\defeq\rb{\iota,\varphi,\mu}$
is a Prony map
from~$Q_P$~to~$P$.
Indeed,
easy computations show that
$\varphi\colon V\to\widetilde{V}$
is a vector space homomorphism
and that
$\mu\comp Q_P=P\comp\varphi$.
\end{example}

\begin{example}%
[Projection methods]%
\label{example:projection-prony-map}
For
$n\in\nat$
let
$V_n\defeq\Exp^n_{K^n}\of{F}$.
Let~$\Hankel_n$
be the Prony structure
derived from
Theorem~\ref{theorem:exponential-sums}\,\ref{theorem:exponential-sums:hankel-NN^n}.

For
a fixed
$\alpha\in\nat^n$
let
\begin{equation*}
\text{$\varphi_\alpha\colon V_n\to V_1$,
\quad
$f\mapsto f_\alpha$,}
\end{equation*}
where
\begin{equation*}
\text{$f_\alpha\colon\nat\to K$,
\quad
$k\mapsto f\of{k\mul\alpha}$.}
\end{equation*}
It is easy to see that $f_\alpha\in V_1$ and hence~$\varphi$~is well-defined.
Furthermore,
let
\begin{equation*}
\text{$\mu_\alpha\colon{\Hankel_n\of{V_n}}\to{\Hankel_1\of{V_1}}$,
\quad
$\Hankel_n\of{f}\mapsto\Hankel_1\of{f_\alpha}$.}
\end{equation*}
Then
$\psi_\alpha\defeq\rb{{\id_F},\varphi_\alpha,\mu_\alpha}$
is a Prony map from~$\Hankel_n$~to~$\Hankel_1$.

Also note that
$\Hankel_{1,d}\of{f_\alpha}
=
\rb{\Hankel_{n,d}\of{f}_{\beta,\gamma}}_{\beta,\gamma\in\FJ_{1,d}\mul\alpha}$.
\end{example}

\begin{proof}
It is easy to verify that~$\varphi_\alpha$~is $F$-linear.
Furthermore,
for every $b\in K^n$ we have
\begin{equation*}
\varphi_\alpha\of{\exp_b}
=
\exp_{b^\alpha}\text{,}
\end{equation*}
hence $\varphi_\alpha\of{B_n}\subseteq B_1$.
The identity
${\Hankel_1}\comp\varphi_\alpha=\mu_\alpha\comp{\Hankel_n}$
holds by the definitions.

Finally,
let
$f\in V_n$
and
$d\in\nat$.
We have
\begin{align*}
\Hankel_{1,d}\of{f_\alpha}
&=
\rb{f_\alpha\of{k+\ell}}_{k,\ell\in\FJ_{1,d}}
=
\rb{f\of{\rb{k+\ell}\alpha}}_{k,\ell\in\FJ_{1,d}}
=
\rb{f\of{k\alpha+\ell\alpha}}_{k,\ell\in\FJ_{1,d}}\\
&=
\rb{f\of{\beta+\gamma}}_{\beta,\gamma\in\FJ_{1,d}\mul\alpha}
=
\rb{\Hankel_{n,d}\of{f}_{\beta,\gamma}}_{\beta,\gamma\in\FJ_{1,d}\mul\alpha}\text{,}
\end{align*}
which concludes the proof.
\end{proof}

\begin{remark}
For the Prony structures~$\Toeplitz_n$~and~$\Hankel_n$~from
Theorem~\ref{theorem:exponential-sums}\,\ref{theorem:exponential-sums:toeplitz-ZZ^n},~\ref{theorem:exponential-sums:hankel-ZZ^n}
Prony maps ${\Toeplitz_n}\to{\Toeplitz_1}$
and ${\Hankel_n}\to{\Hankel_1}$
can be constructed analogously to
Example~\ref{example:projection-prony-map}.
\end{remark}

\begin{example}
There is a Prony map
$\psi=\rb{\iota,\varphi,\mu}\colon{\Toeplitz}\to{\Hankel}$
given by
$\iota={\id_F}$,
$\varphi={\id_{\Exp^n_Y\of{F}}}$,
and
$\mu\of{\Toeplitz\of{f}}=\Hankel\of{f}$.
Note that~$\mu$~is well-defined
since all the coefficients of
$\Hankel_d\of{f}=\rb{f\of{\alpha+\beta}}_{\alpha,\beta\in\FJ_d}$
appear in the matrix
$\Toeplitz_e\of{f}=\rb{f\of{\alpha-\beta}}_{\beta,\alpha\in\FJ_e}$
for some $e\in\nat$.
\end{example}

\begin{example}%
[Gau\ss{}ian sums]
Let the notation and assumptions be as in
Theorem~\ref{theorem:gaussian-sums}
and let
$\widetilde{C}\defeq\varphi\of{B}=\pset{\lambda_{A,t}\mul\exp_{b_{A,t}}}{t\in\real^n}$.
Clearly,
$\widetilde{C}$~is a basis of~$\widetilde{V}$.
Let
$\widetilde{v}\colon\widetilde{C}\to\real^n$,
$\varphi\of{b}\mapsto\widetilde{u}\of{b}$
and let~$P$~be any Prony structure on~$\widetilde{V}$~\wrt~$\widetilde{v}$.
Let $\iota\colon\complex\to\complex$ be the identity map.
It is again easy to see that
$Q_P\of{V}\subseteq P\of{\widetilde{V}}$.
Thus,
let
$\mu\colon Q_P\of{V}\emb P\of{\widetilde{V}}$
be the inclusion map.
Then
$\psi\defeq\rb{\iota,\varphi,\mu}\colon Q_P\to P$
is a Prony map.
Indeed,
we have already seen
in
Theorem~\ref{theorem:gaussian-sums}
that
$\varphi\colon V\to\widetilde{V}$
is a $\complex$-vector space isomorphism.
By the definitions,
we have
$\varphi\of{B}\subseteq\widetilde{C}$
and
the diagram
\begin{equation*}
\begin{tikzcd}
|[alias=V ]|  V              &  |[alias=PV  ]|  Q_P\of{V}            \\
|[alias=V']|  \widetilde{V}  &  |[alias=P'V']|  P\of{\widetilde{V}}
\ar[  "Q_P"      ,  from=V   ,  to=PV    ]
\ar[  "P"        ,  from=V'  ,  to=P'V'  ]
\ar[  "\mu"      ,  from=PV  ,  to=P'V'  ]
\ar[  "\varphi"  ,  from=V   ,  to=V'    ]
\end{tikzcd}
\end{equation*}
is commutative.
\end{example}

\end{document}